\newcounter{mnote}
  \let\oldmarginpar\marginpar
    \renewcommand\marginpar[1]{\-\oldmarginpar[\raggedleft\footnotesize #1]%
    {\raggedright\footnotesize #1}}
\newtheorem{theorem}{Theorem}[section]
\newtheorem{lemma}[theorem]{Lemma}
\newtheorem{example}[theorem]{Example}
\newtheorem{remark}[theorem]{Remark}
\newcommand{\dx}{\,{\rm d}x}
\newcommand{\dd}{\,{\rm d}}
\newcommand{\sign}{\operatorname{sign}}
\newcommand{\curl}{\operatorname{curl}}
\renewcommand{\div}{\operatorname{div}}
\newcommand{\grad}{\operatorname{grad}}
\newcommand{\tr}{\operatorname{tr}}
\newcommand{\skw}{\operatorname{skw}}
\begin{document}
\title[Decoupled finite element methods]{Decoupled finite element methods for a fourth-order exterior differential equation}
\author{Xuewei Cui}%
\address{School of Mathematics, Shanghai University of Finance and Economics, Shanghai 200433, China}%
\email{xueweicui@stu.sufe.edu.cn}%
 \author{Xuehai Huang}%
 \address{Corresponding author. School of Mathematics, Shanghai University of Finance and Economics, Shanghai 200433, China}%
 \email{huang.xuehai@sufe.edu.cn}%

\thanks{This work was supported by the National Natural Science Foundation of China Project 12171300. 
}
\keywords{fourth-order exterior differential equation, decoupled finite element method, Helmholtz decomposition, finite element exterior calculus, conforming finite element method}
\subjclass[2010]{
58J10;   
65N30;   
65N12;   
65N22;   
}

\begin{abstract}
This paper proposes novel decoupled finite element methods for a fourth-order exterior differential equation. Based on differential complexes and the Helmholtz decomposition, the fourth-order exterior differential equation is decomposed into two second-order exterior differential equations and one generalized Stokes equation. A key advantage of this decoupled formulation is that it avoids the use of quotient spaces. A family of conforming finite element methods are developed for the decoupled formulation. Numerical results are provided for verifying the decoupled finite element methods of the biharmonic equation in three dimensions.
\end{abstract}

\maketitle


\section{Introduction}
This paper focuses on exploring decoupled discrete methods for the fourth-order exterior differential equation on a bounded polytope $\Omega \subset \mathbb{R}^{d}$ ($d\geq2$): given $f\in L^2\Lambda^j(\Omega)$ and $g\in L^2\Lambda^{j-1}(\Omega)\cap\delta H_0^{\rm Gd}\Lambda^j(\Omega)$ with $0\leq j\leq d-1$, find $u\in H_0^{\rm Gd}\Lambda^j(\Omega)$ and $\lambda\in H_0\Lambda^{j-1}(\Omega)/\dd H_0\Lambda^{j-2}(\Omega)$ satisfying
\begin{subequations}\label{4thorderpde}
\begin{align}
\label{4thorderpde1}
-\delta\Delta \dd u+\dd\lambda&=f \quad \mathrm{in}\,\,\Omega, \\
\label{4thorderpde2}
\delta u&=g  \quad\hskip0.01in \mathrm{in}\,\,\Omega,
\end{align}
\end{subequations}
where $\dd$ is the exterior derivative, 
$\delta$ is the codifferential operator, $\Delta$ is the Laplacian operator, and 
$$
H_0^{\rm Gd}\Lambda^j(\Omega)=\{\omega\in H_0\Lambda^j(\Omega): \dd\omega\in H_0^1\Lambda^{j+1}(\Omega)\}.
$$
Here, $H_0\Lambda^{j-1}(\Omega)/\dd H_0\Lambda^{j-2}(\Omega)$ denotes the orthogonal complement of $\dd H_0\Lambda^{j-2}(\Omega)$ in $H_0\Lambda^{j-1}(\Omega)$.
The equation \eqref{4thorderpde1} is related to the Hodge decomposition \cite{Arnold2018,ArnoldFalkWinther2006,ArnoldFalkWinther2010}
$$
L^2\Lambda^j(\Omega)= (L^2\Lambda^j(\Omega)\cap\ker(\delta))\oplus\dd H_0\Lambda^{j-1}(\Omega),
$$
where $\ker(\delta)$ means the kernel of the codifferential operator $\delta$.
Applying the codifferential operator $\delta$ to equation \eqref{4thorderpde1} yields
\begin{equation*}
\delta\dd\lambda=\delta f \quad \mathrm{in}\,\,\Omega,
\end{equation*}
which uniquely determines $\lambda$ in the quotient space $H_0\Lambda^{j-1}(\Omega)/\dd H_0\Lambda^{j-2}(\Omega)$. Without loss of generality, we assume $\delta f=0$ in this paper, then $\lambda=0$.

Some examples of problem~\eqref{4thorderpde} are listed as follows.
\begin{itemize}[leftmargin=*]
\item 
For $j=0$, problem \eqref{4thorderpde} becomes the biharmonic equation
\begin{equation}\label{biharmoniceqn}
\Delta^2 u=f\quad \mathrm{in}\,\,\Omega.
\end{equation}
The biharmonic equation finds application in areas such as thin plate theory \cite{Reddy2006} and incompressible fluid flow \cite{Ladyzhenskaya1969}.
\item 
For $j=1$ and $d=3$, problem \eqref{4thorderpde} becomes the quad-curl problem
\begin{subequations}\label{quadcurleqn}
\begin{align}
\label{quadcurleqn1}
-\curl\Delta\curl u +\nabla\lambda&=f \quad \mathrm{in}\,\,\Omega, \\
\label{quadcurleqn2}
\div u&=g  \quad\hskip0.01in \mathrm{in}\,\,\Omega.
\end{align}
\end{subequations}
The quad-curl problem arises in the modeling of magnetized plasmas within magnetohydrodynamics \cite{KingsepChukbarYankov1990,ChaconSimakovZocco2007}.
\item 
For $j=d-1$, problem \eqref{4thorderpde} becomes the fourth-order div problem
\begin{subequations}\label{quaddiveqn}
\begin{align}
\label{quaddiveqn1}
\nabla\Delta\div u+\curl\lambda&=f \quad \mathrm{in}\,\,\Omega, \\
\label{quaddiveqn2}
\curl^*u&=g  \quad\hskip0.01in \mathrm{in}\,\,\Omega.
\end{align}
\end{subequations}
The fourth-order div operator appears in the strain gradient elasticity \cite{Mindlin1964,MindlinEshel1968,ChenHuangHuang2023}.
\end{itemize}
Here, $\curl$ denotes the exterior derivative operator, acting on 1-forms or $(d-2)$-forms depending on the context, and its adjoint is denoted by $\curl^*$.


Since $H_0^{\rm Gd}\Lambda^j$ conforming finite elements typically require high-degree polynomials  and supersmooth degrees of freedom, such as smooth finite elements in~\cite{HuLinWu2024,ChenHuang2024,ChenHuang2024a,Zhang2016,Zhang2009,LaiSchumaker2007,Zenisek1974,Zenisek1970,BrambleZlamal1970,ArgyrisFriedScharpf1968}, $H(\grad\curl)$ conforming finite elements in \cite{ChenHuang2024,ChenHuang2024a,ZhangZhang2020,ZhangWangZhang2019,HuZhangZhang2020} and $H(\grad\div)$ conforming finite elements in \cite{ChenHuang2024,ChenHuang2024a,ZhangZhang2022}, we will consider decoupled finite element methods for the fourth-order exterior differential equation~\eqref{4thorderpde} in this paper.

Applying the framework in \cite{ChenHuang2018} to
the de Rham complex
\begin{equation*}
H^*\Lambda^{j+4}\xrightarrow{\delta} H^*\Lambda^{j+3}\xrightarrow{\delta} L^{2}\Lambda^{j+2}\xrightarrow{\delta} H^{-1}(\delta, \Lambda^{j+1})\xrightarrow{\delta}H^{-1}\Lambda^{j} \cap\ker(\delta)\xrightarrow{}0
\end{equation*}
with $H^{-1}(\delta, \Lambda^{j+1}):=\{\omega\in H^{-1}\Lambda^{j+1}: \delta\omega\in H^{-1}\Lambda^{j}\}$, we derive several Helmholtz decompositions 
\begin{equation*}
(H_{0}\Lambda^{j+1})'=H^{-1}(\delta, \Lambda^{j+1})=\dd H_{0}\Lambda^{j}\oplus
\delta L^{2}\Lambda^{j+2},
\end{equation*}
\begin{equation*}
L^{2}\Lambda^{j+2}=\dd H_{0}^1\Lambda^{j+1} \oplus^{\perp} \delta H^*\Lambda^{j+3}=\dd H_{0}^1\Lambda^{j+1} \oplus^{\perp} \delta H^1\Lambda^{j+3},
\end{equation*}
and decouple problem~\eqref{4thorderpde} into two second-order exterior differential equations and one generalized Stokes equation:
find $u, w\in H_{0}\Lambda^{j}$, $\lambda,z\in H_0\Lambda^{j-1}/\dd H_0\Lambda^{j-2}$,
$\phi\in H_{0}^{1}\Lambda^{j+1}$,
$p\in L^{2}\Lambda^{j+2}$ and $r\in H^*\Lambda^{j+3}/\delta H^*\Lambda^{j+4}$ such that
\begin{subequations}\label{intro:decoupleform}
\begin{align}
(\dd w,\dd v)+(v,\dd\lambda)&=(f,v), \label{intro:decoupledform1}
 \\
(w,\dd\eta)&=0, \label{intro:decoupledform10}
 \\
 (\nabla\phi,\nabla\psi)+(\dd\psi+\delta s, p)&=(\dd w,\psi), \label{intro:decoupledform2}\\
 (\dd \phi+\delta r, q) &=0,\label{intro:decoupledform3}\\
  (\dd u,\dd \chi)+(\chi,\dd z)&=(\phi,\dd\chi), \label{intro:decoupledform4} \\
(u,\dd\mu)&=(g,\mu), \label{intro:decoupledform40}
\end{align}
for any $v,\chi\in H_{0}\Lambda^{j}$, $\eta,\mu\in H_0\Lambda^{j-1}/\dd H_0\Lambda^{j-2}$,
$\psi\in H_{0}^{1}\Lambda^{j+1}$,
$ q\in L^2\Lambda^{j+2}$ and $s\in H^*\Lambda^{j+3}/\delta H^*\Lambda^{j+4}$.
\end{subequations}
The decoupled formulation \eqref{intro:decoupleform} covers many decoupled formulations in literature: the decoupled formulation of the biharmonic equation in two and three dimensions in~\cite{HuangHuangXu2012,Huang2010,ChenHuang2018,Gallistl2017} and the decoupled formulation of the quad-curl problem in three dimensions in \cite[Section 3.4]{ChenHuang2018}.

Conforming finite element methods of the decoupled formulation~\eqref{intro:decoupleform} with $j=0$ and $d=2$, i.e. biharmonic equation in two dimensions, are shown in \cite[Section 4.2]{ChenHuang2018}.
A low-order nonconforming finite element discretization of the decoupled formulation~\eqref{intro:decoupleform} with $j=1$ and $d=3$, i.e. the quad-curl problem in three dimensions, is designed in \cite{CaoChenHuang2022}.
The decoupled formulation of the biharmonic equation in two dimensions is employed to implement the $H^2$-conforming finite element methods using $H^1$-conforming finite elements in \cite{AinsworthParker2024}, and design fast solvers for the Morley element method in \cite{HuangHuangXu2012,FengZhang2016,HuangShiWang2021}.
Based on nonconforming finite element Stokes complexes, nonconforming finite element methods of the quad-curl problem in three dimensions in \cite{Huang2023,HuangZhang2024} are equivalent to nonconforming discretization of the decoupled formulation \eqref{intro:decoupleform}, which is also helpful for developing efficient solvers.
We refer to~\cite{ChenHuang2018,Schedensack2016,Zhang2018,AinsworthParker2024a,BrennerCavanaughSung2024,BrennerSunSung2017} for different decoupled formulations rather than the decoupled formulation \eqref{intro:decoupleform}.
Decoupled formulations and finite element methods for the triharmonic equation have also been developed in \cite{ChenHuang2018,Gallistl2017,Schedensack2016,AnHuangZhang2024}.

While implementing the discretization of the decoupled formulation \eqref{intro:decoupleform} poses challenges due to the quotient spaces $H_0\Lambda^{j-1}/\dd H_0\Lambda^{j-2}$ and $L^2\Lambda^{j+3}/\delta H^*\Lambda^{j+4}$. To circumvent this, 
by using the fact that $r=0$ and $\lambda=z=0$ in \eqref{intro:decoupleform}, we propose the following novel decoupled formulation without quotient spaces:
find $u, w\in H_{0}\Lambda^{j}$, $\lambda,z\in H_0\Lambda^{j-1}$,
$\phi\in H_{0}^{1}\Lambda^{j+1}$,
$ p\in L^{2}\Lambda^{j+2}$ and $r\in H^*\Lambda^{j+3}$ such that
\begin{subequations}\label{intro:decoupleformnew}
\begin{align}
(\dd w,\dd v)+(v,\dd\lambda)&=(f,v)\qquad\;\forall~v\in H_{0}\Lambda^{j}, \label{intro:decoupledformnew1}
 \\
(w,\dd\eta)-(\lambda,\eta)&=0\qquad\qquad\, \forall~\eta\in H_0\Lambda^{j-1}, \label{intro:decoupledformnew10}
 \\
 (\nabla\phi,\nabla\psi)+(r,s)+(\dd\psi+\delta s, p)&=(\dd w,\psi)\quad\;\forall~\psi\in H_{0}^{1}\Lambda^{j+1}, s\in H^*\Lambda^{j+3}, \label{intro:decoupledformnew2}\\
 (\dd \phi+\delta r, q) &=0\qquad\qquad\,\forall~q\in L^2\Lambda^{j+2},\label{intro:decoupledformnew3}\\
  (\dd u,\dd \chi)+(\chi,\dd z)&=(\phi,\dd\chi)\quad\;\,\forall~\chi\in H_{0}\Lambda^{j}, \label{intro:decoupledformnew4} \\
(u,\dd\mu)-(z,\mu)&=(g,\mu)\qquad\;\forall~\mu\in H_0\Lambda^{j-1}. \label{intro:decoupledformnew40}
\end{align}
\end{subequations}
Compared to the decoupled formulation \eqref{intro:decoupleform} and the variational formulation \eqref{4thorderpdeweakform}, the decoupled formulation \eqref{intro:decoupleformnew} avoids quotient spaces entirely, making it significantly more amenable to the design of finite element methods and efficient solvers.

Both \eqref{intro:decoupledformnew1}-\eqref{intro:decoupledformnew10} and \eqref{intro:decoupledformnew4}-\eqref{intro:decoupledformnew40} are second-order exterior differential equations, which can be discretized by finite element differential forms in \cite{Hiptmair2001,Hiptmair2002,Arnold2018,ArnoldFalkWinther2006,ArnoldFalkWinther2010}.
We focus on designing finite element methods for the generalized Stokes equation~\eqref{intro:decoupledformnew2}-\eqref{intro:decoupledformnew3}.
We use $\mathbb{P}_{k}\Lambda^{j+1}(T) + b_{T}\,\delta\mathbb{P}_{k}\Lambda^{j+2}(T)$ as the shape function space to discretize $\phi\in H_{0}^{1}\Lambda^{j+1}$,
trimmed finite element differential form $V_{k,h}^{\delta,-}\Lambda^{j+2}$ to discretize $p\in L^{2}\Lambda^{j+2}$, and $V_{k,h}^{\delta,-}\Lambda^{j+3}$ to discretize $r\in H^*\Lambda^{j+3}$.
The resulting finite element method can be regarded as the generalization of the MINI element method for Stokes equation in \cite{ArnoldBrezziFortin1984,BoffiBrezziFortin2013}.
Error analysis is present for the decoupled finite element method.
It's worth mentioning that
the error estimate for $\|p-p_h\|$ is optimal when $j\leq d-3$, while the error estimate of $\|p-p_h\|$ for the MINI element method of Stokes equation is suboptimal \cite{ArnoldBrezziFortin1984}. 

Thanks to the discrete solutions $r_{h}=0$ and $\lambda_{h}=z_{h}=0$,
we can replace the terms $(\lambda_h,\eta)$, $(r_h,s)$, and $(z_h,\mu)$ in the finite element discretization of the decoupled formulation~\eqref{intro:decoupleformnew} with the weighted discrete $L^2$-inner products $\langle\lambda_h,\eta\rangle_D$, $\langle r_h,s\rangle_D$, and $\langle z_h,\mu\rangle_D$ respectively. Here, 
the matrix representation of these weighted inner products is simply the diagonal matrix of the standard $L^2$ mass matrix.
The resulting decoupled finite element method seeks $w_{h}\in \mathring{V}_{k,h}^{\dd}\Lambda^{j}$,
$\phi_{h}\in\Phi_h$,
$p_h\in V_{k,h}^{\delta,-}\Lambda^{j+2}$, $r_h\in V_{k,h}^{\delta,-}\Lambda^{j+3}$,
$u_h\in \mathring{V}_{k+1,h}^{\dd,-}\Lambda^{j}$ and $\lambda_h,z_h\in \mathring{V}_{k+1,h}^{\dd,-}\Lambda^{j-1}$ such that
\begin{subequations}\label{intro:decoupleMINIdiag}
\begin{align}
(\dd w_h,\dd v)+(v,\dd\lambda_h)&=(f,v)
  &&\!\!\forall \, v\in \mathring{V}_{k,h}^{\dd}\Lambda^{j},\label{intro:decoupleMINIdiag1}\\
(w_h,\dd\eta)-\langle\lambda_h,\eta\rangle_D&=0 &&\!\!\forall \, \eta\in \mathring{V}_{k+1,h}^{\dd,-}\Lambda^{j-1}, \label{intro:decoupleMINIdiag10} \\
(\nabla\phi_{h},\nabla\psi)+\langle r_h,s\rangle_D+(\dd \psi
 +\delta s, p_{h})&=(\dd w_{h},\psi)
  &&\!\!\forall \, \psi\in \Phi_h, s\in V_{k,h}^{\delta,-}\Lambda^{j+3},\label{intro:decoupleMINIdiag2} \\
 (\dd \phi_{h}+\delta r_{h}, q) &=0  &&\!\!\forall \, q\in V_{k,h}^{\delta,-}\Lambda^{j+2},\label{intro:decoupleMINIdiag3}\\
  (\dd u_h,\dd \chi)+(\chi,\dd z_h)&=(\phi_{h},\dd\chi) &&\!\!\forall \, \chi\in \mathring{V}_{k+1,h}^{\dd,-}\Lambda^{j},\label{intro:decoupleMINIdiag4} \\
(u_h,\dd\mu)-\langle z_h,\mu\rangle_D&=(g,\mu) &&\!\!\forall \,  \mu\in \mathring{V}_{k+1,h}^{\dd,-}\Lambda^{j-1}. \label{intro:decoupleMINIdiag40}
\end{align}
\end{subequations}
The diagonal structure of the matrices associated with $\langle\lambda_h,\eta\rangle_D$, $\langle r_h,s\rangle_D$, and $\langle z_h,\mu\rangle_D$ allows for the local elimination of $\lambda_h$, $r_h$, and $z_h$ during the assembly of the linear system for the decoupled mixed method \eqref{intro:decoupleMINIdiag}, reducing the unknowns to $w_h$, $\phi_h$, $p_h$ and $u_h$. After elimination, the coefficient matrices of the mixed methods \eqref{intro:decoupleMINIdiag1}-\eqref{intro:decoupleMINIdiag10} and \eqref{intro:decoupleMINIdiag4}-\eqref{intro:decoupleMINIdiag40} become symmetric and positive definite.

The rest of this paper is organized as follows. Section~\ref{sec:4thede} presents weak formulations of the fourth-order exterior differential equation, and
Section~\ref{sec:decoupleform} focuses on their decoupled variational formulations.
A family of decoupled conforming finite element methods are designed in Section \ref{sec:MINIfem}.
Finally, in Section~\ref{sec:numeresult}, we conduct numerical experiments to validate the theoretical estimates we have developed.

\section{Fourth-order exterior differential equation}\label{sec:4thede}

Weak formulations for the fourth-order exterior differential equation \eqref{4thorderpde} are introduced in this section, and their well-posedness is rigorously established.

\subsection{Notation}
Let $\Omega\subset \mathbb{R}^d$ ($d\geq2$) be a bounded and contractible polytope.
Given integer $m$ and a bounded domain $D \subset \mathbb{R}^{d}$,
let $H^{m}(D)$ be the standard Sobolev space of functions on $D$.
The corresponding norm and
semi-norm are denoted by $\| \cdot \|_{ m,D}$ and $|\cdot|_{m,D}$, respectively. Set $L^{2}(D) = H^{0}(D)$
with  the usual inner product $(\cdot, \cdot)_{D}$. We denote $H_{0}^{m}(D)$ as the closure of $C_{0}^{\infty}(D)$
with respect to the norm $\| \cdot \|_{m,D}$.
In case $D$ is $\Omega$, we abbreviate  $\| \cdot \|_{ m,D}$, $|\cdot|_{m,D}$
and $(\cdot, \cdot)_{D}$  as $\| \cdot \|_{m}$, $|\cdot|_{m}$ and $(\cdot, \cdot)$, respectively.
We also abbreviate $\Vert\cdot\Vert_{0,D}$ and $\Vert\cdot\Vert_{0}$ by $\Vert\cdot\Vert_{D}$ and $\Vert\cdot\Vert$, respectively.
For integer $k\geq0$,
let $\mathbb{P}_{k}(D)$ represent the space of all polynomials in $D$ with the total degree no more than $k$.
Set $\mathbb{P}_{k}(D)=\{0\}$ for $k<0$.
Let $L_{0}^{2}(D)$ be the space of functions in $L^{2}(D)$ with vanishing integral average values.
For a space $B(D)$ defined on $D$,
let $B(D; \mathbb{R}^d):=B(D)\otimes\mathbb{R}^d$ be its vector version. 
Denote by $h_D$ the diameter of $D$. We use $\boldsymbol{n}_{\partial D}$ to denote the unit outward normal vector of $\partial D$, which will be abbreviated as $\boldsymbol{n}$ if not causing any confusion.

We mainly follow the notation set in \cite{Arnold2018,ArnoldFalkWinther2006,ArnoldFalkWinther2010}.
For a $d$-dimensional vector space $V$ and a nonnegative integer $j$,  define the space ${\rm Alt}^jV$ as the space of all skew-symmetric $j$-linear forms. For a multilinear $j$-form $\omega$, its skew-symmetric part
\begin{equation*}
(\skw\omega)(v_1, \ldots, v_j)=\frac{1}{j!}\sum_{\sigma\in\mathfrak{S}_j}\sign(\sigma)\omega(v_{\sigma(1)}, \ldots, v_{\sigma(j)}),\quad v_1, \ldots, v_j\in V
\end{equation*}  
is an alternating form, where $\mathfrak{S}_j$ is the symmetric group of all permutations of the set $\{1,\ldots, j\}$, and $\sign(\sigma)$ denotes the signature of the permutation $\sigma$.
The exterior product or wedge product of $\omega\in {\rm Alt}^iV$ and $\eta\in {\rm Alt}^jV$ is
given by
\begin{equation*}
\omega\wedge\eta={i+j\choose j}\skw(\omega\otimes\eta)\in{\rm Alt}^{i+j}V,
\end{equation*}
where $\otimes$ is the tensor product. It satisfies the anticommutativity law
\begin{equation*}
\omega\wedge\eta=(-1)^{ij}\eta\wedge\omega,\quad \omega\in {\rm Alt}^iV,\; \eta\in {\rm Alt}^jV.
\end{equation*}

An inner product on $V$ induces an inner product on ${\rm Alt}^jV$ as follows
\begin{equation*}
\langle \omega, \eta\rangle=\sum_{\sigma}\omega(e_{\sigma(1)}, \ldots, e_{\sigma(j)})\eta(e_{\sigma(1)}, \ldots, e_{\sigma(j)}),\quad \omega,\eta\in {\rm Alt}^jV,
\end{equation*}
where the sum is over increasing sequences $\sigma: \{1, \ldots, j\}\to\{1, \ldots, d\}$ and $\{e_1, \ldots, e_d\}$ is any orthonormal basis.
If the space $V$ is endowed with an orientation by assigning a positive orientation to some particular ordered basis, 
the volume form $\textsf{vol}\in{\rm Alt}^dV$ is the unique $d$-form characterized by $\textsf{vol}(e_{1}, \ldots, e_{d})=1$ for any positively oriented ordered orthonormal basis $e_{1}, \ldots, e_{d}$.
The Hodge star operator is an isometry of ${\rm Alt}^jV$ onto ${\rm Alt}^{d-j}V$ given by
\begin{equation*}
\omega\wedge\eta=\langle\star\omega, \mu\rangle\textsf{vol},\quad \omega\in {\rm Alt}^jV,\; \eta\in {\rm Alt}^{d-j}V.
\end{equation*}
We have
\begin{equation*}
\star\star\omega=(-1)^{j(d-j)}\omega,\quad \omega\in {\rm Alt}^jV.
\end{equation*}

A differential $j$-form $\omega$ is a section of the $j$-alternating bundle, i.e., a map which assigns to each $x \in D$ an element $\omega_x\in{\rm Alt}^jT_x D$, where $T_x D$ denotes the tangent space to $D$ at $x$.
For a space $B(D)$ defined on $D$, let $B\Lambda^j(D)$ be the space of all $j$-forms whose coefficient function belongs to $B(D)$. Notice that $B\Lambda^0(D)=B(D)$.
The norm $\| \cdot \|_{ m,D}$ and semi-norm $|\cdot|_{m,D}$ are extended to space $H^m\Lambda^j(D)$, and inner product $(\cdot, \cdot)_{D}$ to space $L^2\Lambda^j(D)$. 
Define 
\begin{align*}
H\Lambda^j(D)&:=\{\omega\in L^2\Lambda^j(D): \dd\omega\in L^2\Lambda^{j+1}(D)\},\\
H^{*}\Lambda^j(D)&:=\{\omega\in L^2\Lambda^j(D): \delta\omega\in L^2\Lambda^{j-1}(D)\},
\end{align*}
where $\dd$ is the exterior derivative, and 
the codifferential operator $\delta$ is defined as
\begin{equation}\label{eq:stardelta}
\star\delta\omega=(-1)^j\dd\star\omega.
\end{equation}
We equip the following squared norms for the spaces $H\Lambda^j(D)$ and $H^{*}\Lambda^j(D)$:
\begin{equation*}
\|\omega\|_{H\Lambda^j(D)}^2:=\|\omega\|_{D}^2+\|\dd\omega\|_{D}^2,\quad
\|\omega\|_{H^{*}\Lambda^j(D)}^2:=\|\omega\|_{D}^2+\|\delta\omega\|_{D}^2.
\end{equation*}
When $D=\Omega$, we use the abbreviations $\|\cdot\|_{H\Lambda^j}$ and $\|\cdot\|_{H^{*}\Lambda^j}$ for $\|\cdot\|_{H\Lambda^j(D)}$ and $\|\cdot\|_{H^{*}\Lambda^j(D)}$, respectively.
Denote by $H_0\Lambda^j(D)$ the subspace of $H\Lambda^j(D)$ with vanishing trace.
Define
$$
H_0^{\rm Gd}\Lambda^j(D):=\{\omega\in H_0\Lambda^j(D): \dd\omega\in H_0^1\Lambda^{j+1}(D)\}
$$
with the norm  $(\|\omega\|_D+\|\dd\omega\|_{1,D})^{1/2}$.
We can identify $H_0^m\Lambda^{d}(D)$ and $H_0\Lambda^{d-1}(D)$ as $H_0^m(D)\cap L_0^2(D)$ and $H_0(\div, D)$, respectively.
The spaces $L^2\Lambda^j(D)$ and $H^*\Lambda^j(D)$ are to be interpreted as the trivial space $\{0\}$ for $j\geq d+1$, whereas $\mathbb{P}_{k}\Lambda^{d+1}(D)$ is to be interpreted as $\mathbb R$.
We will abbreviate the Sobolev space $B(D)$ as $B$ when $D=\Omega$ if not causing any confusion.

For sufficient smooth $(j-1)$-form $\omega$ and $j$-form $\mu$,
it holds the integration by parts
\begin{equation}\label{greenidentity}
(\dd\omega, \mu)_D=(\omega, \delta\mu)_D+\int_{\partial D}\tr\omega\wedge\tr(\star\mu).
\end{equation}
For $(d-1)$-dimensional face $F$ of $\partial D$, we have
\begin{equation*}
\int_{F}\tr\omega\wedge\tr(\star\mu)=(-1)^{(d-j)(j-1)}(\tr\omega, \star_F\tr(\star\mu))_F.
\end{equation*}
Let $\kappa$ be the Koszul operator mapping $\mathbb P_{k-1}\Lambda^{d-j+1}(D)$ to $\mathbb P_{k}\Lambda^{d-j}(D)$, then for a contractible domain $D$ it holds 
the decomposition \cite{ArnoldFalkWinther2006,Arnold2018}
\begin{equation}\label{eq:polydecompdelta}
\mathbb{P}_{k}\Lambda^j(D)=\star\kappa\mathbb{P}_{k-1}\Lambda^{d-j+1}(D)\oplus\delta \mathbb{P}_{k+1}\Lambda^{j+1}(D)\quad\textrm{ for } j=1,\ldots, d-1. 
\end{equation}
This implies $\mathbb{P}_{0}\Lambda^j(D)=\delta \mathbb{P}_{1}\Lambda^{j+1}(D)$.
Set $\mathbb P_{k}^-\Lambda^{j}(D):=\dd\mathbb P_{k}\Lambda^{j-1}(D)\oplus\kappa\mathbb P_{k-1}\Lambda^{j+1}(D)$.
We have $\mathbb P_{k}^-\Lambda^{0}(D)=\mathbb P_{k}\Lambda^{0}(D)$ and $\mathbb P_{k}^-\Lambda^{d}(D)=\mathbb P_{k-1}\Lambda^{d}(D)$.
For a $0$-form $v$, $\dd v=\nabla v\cdot\dd\boldsymbol{x}$ with $\dd\boldsymbol{x}=(\dx_1, \ldots, \dx_d)^{\intercal}$. If not causing any confusion, we will use $\nabla v$ to represent $\dd v$ for $0$-form $v$.

Let $\{\mathcal{T}_{h}\}_{h>0}$ be a regular family of simplicial meshes of $\Omega\subset \mathbb{R}^{d}$, where $h=\max_{T\in\mathcal{T}_{h}}h_T$.
For $\ell=0,1,\ldots, d-1$,
denote by $\Delta_{\ell}(\mathcal T_h)$ and $\Delta_{\ell}(\mathring{\mathcal T}_h)$ the set of all subsimplices and all interior subsimplices of dimension $\ell$ in the partition $\mathcal{T}_{h}$, respectively.
For a simplex $T$, we let 
$\Delta_{\ell}(T)$ denote the set of subsimplices of dimension $\ell$. 
For a subsimplex $f$ of $\mathcal{T}_{h}$, let $\mathcal T_f$ be the set of all simplices in $\mathcal{T}_h$ sharing $f$.
Denote by $\omega_{T}$ the union of all the simplices in the set $\{\mathcal T_{\texttt{v}}\}_{\texttt{v}\in\Delta_0(T)}$.
For integer $k\geq0$, define 
$\mathbb P_k(\mathcal T_h):=\mathbb P_k\Lambda^{0}(\mathcal T_h)$ with
\begin{align*}
\mathbb P_k\Lambda^{j}(\mathcal T_h)&:=\{v\in L^2\Lambda^{j}(\Omega): v|_T\in\mathbb P_k\Lambda^{j}(T) \;\textrm{ for all } T\in\mathcal{T}_h\}, \\
\mathbb P_{k}^-\Lambda^{j}(\mathcal T_h)&:=\{v\in L^2\Lambda^{j}(\Omega): v|_T\in\mathbb P_{k}^-\Lambda^{j}(T) \;\textrm{ for all } T\in \mathcal{T}_{h}\}.
\end{align*}
Let $Q_{k,T} : L^{2}\Lambda^j(T)\rightarrow \mathbb P_k\Lambda^j(T)$ for $T\in\mathcal T_h$ be the $ L^2$-orthogonal projection operator.



In this paper, we use ``$\lesssim \cdot\cdot\cdot$'' to mean that ``$\leq C \cdot\cdot\cdot$'',
where $C$ is a generic positive constant independent of $h$,
which may have different values in different forms.
And $A \eqsim B$ equivalents to $A \lesssim B$ and $B \lesssim A$.
For a Hilbert space $V$ with inner product $(\cdot, \cdot)_V$ and a subspace $U\subseteq V$, 
the orthogonal complement of $U$ in $V$, denoted by $V/U$, is defined as
\begin{equation*}
V/U:=\{\omega\in V: (\omega, \mu)_V=0 \;\textrm{ for all } \mu\in U \}.
\end{equation*}

\subsection{Variational formulations}

Let $f\in L^2\Lambda^j\cap\ker(\delta)$ and $g\in L^2\Lambda^{j-1}\cap\delta H_0^{\rm Gd}\Lambda^j$ with $0\leq j\leq d-1$.
The weak formulation of problem~\eqref{4thorderpde} is to
find $u\in H_0^{\rm Gd}\Lambda^j$ and $\lambda\in H_0\Lambda^{j-1}/\dd H_0\Lambda^{j-2}$
 such that
\begin{subequations}\label{4thorderpdeweakform}
\begin{align}
\label{4thorderpdeweakform1}
(\nabla\dd u, \nabla\dd v)+(v, \dd\lambda)&=(f,v) \quad\quad \forall\,\, v\in H_0^{\rm Gd}\Lambda^j, \\
\label{4thorderpdeweakform2}
(u, \dd\mu)&=(g,\mu) \quad\quad \forall\,\, \mu\in H_0\Lambda^{j-1}/\dd H_0\Lambda^{j-2}.
\end{align}
\end{subequations}

\begin{lemma}
The variational formulation \eqref{4thorderpdeweakform} is well-posed. The solution $(u, \lambda)\in H_0^{\rm Gd}\Lambda^j\times (H_0\Lambda^{j-1}/\dd H_0\Lambda^{j-2})$ of problem \eqref{4thorderpdeweakform} satisfies $\lambda=0$ and the stability
\begin{equation}\label{eq:4thorderpdestability0}
\|u\|+\|\dd u\|_1\lesssim \|f\|+\|g\|. 
\end{equation}
\end{lemma}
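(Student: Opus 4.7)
I plan to apply Brezzi's saddle-point theory to the mixed formulation \eqref{4thorderpdeweakform}. Set $X := H_0^{\rm Gd}\Lambda^j$ equipped with the norm $\|v\|_X := \|v\| + \|\dd v\|_1$, $M := H_0\Lambda^{j-1}/\dd H_0\Lambda^{j-2}$ equipped with the norm inherited from $H\Lambda^{j-1}$, and bilinear forms $a(u, v) := (\nabla \dd u, \nabla \dd v)$ on $X \times X$ and $b(v, \mu) := (v, \dd\mu)$ on $X \times M$. Continuity of both forms is immediate from Cauchy--Schwarz. The remaining steps are to check coercivity of $a$ on $\ker b$, establish the inf-sup condition on $b$, and finally identify $\lambda = 0$.

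To verify coercivity of $a$ on the kernel $Z := \{v \in X : b(v, \mu) = 0 \text{ for all } \mu \in M\}$, first observe that since $\dd\dd = 0$ and $M$ is the $L^2$-orthogonal complement of $\dd H_0\Lambda^{j-2}$ in $H_0\Lambda^{j-1}$, decomposing any $\nu \in H_0\Lambda^{j-1}$ as $\nu_1 + \nu_2$ with $\nu_1 \in M$ and $\nu_2 \in \dd H_0\Lambda^{j-2}$ gives $\dd\nu = \dd\nu_1$, so $\dd M = \dd H_0\Lambda^{j-1}$. Hence $Z$ consists of those $v \in X$ that are $L^2$-orthogonal to $\dd H_0\Lambda^{j-1}$. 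On the contractible polytope $\Omega$, the Poincaré inequality for forms (cf.\ \cite{Arnold2018,ArnoldFalkWinther2010}) yields $\|v\| \lesssim \|\dd v\|$ on $Z$, while the Friedrichs inequality applied to $\dd v \in H_0^1\Lambda^{j+1}$ gives $\|\dd v\| \lesssim \|\nabla\dd v\|$; combined, $\|v\|_X \lesssim \|\nabla\dd v\| = \sqrt{a(v,v)}$.

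For the inf-sup condition, given $\mu \in M$, I would take the test function $v := \dd\mu$. Since trace commutes with $\dd$ on the boundary, $\mu \in H_0\Lambda^{j-1}$ implies $\dd\mu \in H_0\Lambda^j$, and $\dd v = 0 \in H_0^1\Lambda^{j+1}$, so $v \in X$ with $\|v\|_X = \|\dd\mu\|$ and $b(v,\mu) = \|\dd\mu\|^2$. The Poincaré inequality on $M$ (using $\dd H_0\Lambda^{j-2} = \ker(\dd) \cap H_0\Lambda^{j-1}$ on contractible $\Omega$) produces $\|\mu\|_M \lesssim \|\dd\mu\|$, completing the inf-sup estimate. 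Brezzi's theorem then supplies existence and uniqueness of $(u,\lambda) \in X \times M$ together with $\|u\|_X + \|\lambda\|_M \lesssim \|f\| + \|g\|$, using the trivial bounds $|(f,v)| \leq \|f\|\|v\|_X$ and $|(g,\mu)| \leq \|g\|\|\mu\|_M$.

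To conclude that $\lambda = 0$, I would test \eqref{4thorderpdeweakform1} with $v = \dd\lambda \in X$: the $a$-term vanishes since $\dd v = 0$, reducing the identity to $\|\dd\lambda\|^2 = (f,\dd\lambda)$, which equals $0$ by the hypothesis $\delta f = 0$ (understood weakly as $(f,\dd\eta) = 0$ for $\eta \in H_0\Lambda^{j-1}$). Hence $\dd\lambda = 0$; since $\lambda \in M$ is $L^2$-orthogonal to $\dd H_0\Lambda^{j-2} = \ker(\dd)\cap H_0\Lambda^{j-1}$ while simultaneously lying in that kernel, $\lambda = 0$ follows. The main subtleties to watch are the Poincaré/Friedrichs-type inequalities for differential forms on contractible domains and the trace-commuting-with-$\dd$ argument that places $\dd\mu$ in $H_0\Lambda^j$; both are standard consequences of finite element exterior calculus.
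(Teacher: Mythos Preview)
Your proposal is correct and follows essentially the same route as the paper: both verify coercivity of $a$ on the kernel via the Poincar\'e inequality for forms (the paper writes this as $u\in H_0^{\rm Gd}\Lambda^j\cap\ker(\delta)$, which coincides with your $Z$), establish the inf-sup condition by testing with $v=\dd\mu$ (respectively $\dd\lambda$) and invoking the Poincar\'e inequality on $M$, apply the Babu\v{s}ka--Brezzi theorem, and deduce $\lambda=0$ by taking $v=\dd\lambda$ in \eqref{4thorderpdeweakform1} together with $\delta f=0$. Your write-up supplies a few more details (the explicit identification of $Z$, the trace-commuting argument for $\dd\mu\in H_0\Lambda^j$), but the logical skeleton is identical.
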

\begin{proof}
For $u\in H_0^{\rm Gd}\Lambda^j\cap\ker(\delta)$, by the Poincar\'e inequality \cite[(4.7)]{Arnold2018}, we have the coercivity
\begin{equation*}
\|u\|^2+\|\dd u\|_1^2\lesssim |\dd u|_1^2=(\nabla\dd u, \nabla\dd u).
\end{equation*}
For $\lambda\in H_0\Lambda^{j-1}/\dd H_0\Lambda^{j-2}$, applying the Poincar\'e inequality again, it follows the inf-sup condition
\begin{equation*}
\|\lambda\|_{H\Lambda^{j-1}}\lesssim \|\dd \lambda\|=\frac{(\dd\lambda, \dd\lambda)}{\|\dd \lambda\|}\leq \sup_{v\in H_0^{\rm Gd}\Lambda^j}\frac{(v, \dd\lambda)}{\|v\|+\|\dd v\|_1}.
\end{equation*}
Then apply the Babu\v{s}ka-Brezzi theory \cite{BoffiBrezziFortin2013} to acquire the well-posedness of the variational formulation \eqref{4thorderpdeweakform} and the stability
\begin{align}
\notag
&\|u\|+\|\dd u\|_1+\|\lambda\|_{H\Lambda^{j-1}} \\
\label{eq:4thorderpdestability}
&\qquad\qquad\lesssim \sup_{v\in H_0^{\rm Gd}\Lambda^j, \mu\in H_0\Lambda^{j-1}/\dd H_0\Lambda^{j-2}}\frac{(\nabla\dd u, \nabla\dd v)+(v, \dd\lambda)+(u, \dd\mu)}{\|v\|+\|\dd v\|_1+\|\mu\|_{H\Lambda^{j-1}}}.
\end{align}
Hence, \eqref{eq:4thorderpdestability0} is true.

By taking $v=\dd\lambda$ in \eqref{4thorderpdeweakform1}, we get from $\delta f=0$ that $\dd\lambda=0$. Thus, $\lambda=0$.
\end{proof}

For $j=0$, the well-posedness of the variational formulation \eqref{4thorderpdeweakform} of the biharmonic equation~\eqref{biharmoniceqn} is well-known \cite{Ciarlet1978}.
For $j=1$ and $d=3$, the well-posedness of the variational formulation \eqref{4thorderpdeweakform} of the quad-curl problem~\eqref{quadcurleqn} has been established in \cite{Sun2016,SunZhangZhang2019}.
For $j=d-1$, the well-posedness of the variational formulation \eqref{4thorderpdeweakform} of the fourth-order div problem~\eqref{quaddiveqn} is shown in \cite{ZhangZhang2022}.

It's unfavorable to discretize the variational formulation \eqref{4thorderpdeweakform} immediately, due to the quotient space $H_0\Lambda^{j-1}/\dd H_0\Lambda^{j-2}$.
We can rewrite it as the following equivalent formulation without quotient spaces:
find $u\in H_0^{\rm Gd}\Lambda^j$ and $\lambda\in H_0\Lambda^{j-1}$
 such that
\begin{subequations}\label{4thorderpdeweakformag}
\begin{align}
\label{4thorderpdeweakformag1}
(\nabla\dd u, \nabla\dd v)+(v, \dd\lambda)&=(f,v) \quad\quad \forall\,\, v\in H_0^{\rm Gd}\Lambda^j, \\
\label{4thorderpdeweakformag2}
(u, \dd\mu)-(\lambda,\mu)&=(g,\mu) \quad\quad \forall\,\, \mu\in H_0\Lambda^{j-1}.
\end{align}
\end{subequations}

\begin{lemma}\label{lem:4thorderpdeweakformag}
The variational formulation \eqref{4thorderpdeweakformag} is well-posed, and equivalent to the variational formulation \eqref{4thorderpdeweakform} in the sense that the solution $(u, \lambda)\in H_0^{\rm Gd}\Lambda^j\times H_0\Lambda^{j-1}$ of the variational formulation \eqref{4thorderpdeweakformag} 
also satisfies the variational formulation \eqref{4thorderpdeweakform}.
\end{lemma}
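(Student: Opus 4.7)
My plan is to establish the well-posedness of \eqref{4thorderpdeweakformag} and its equivalence with \eqref{4thorderpdeweakform} simultaneously, by constructing a candidate solution from the already-known solution of \eqref{4thorderpdeweakform}, proving uniqueness directly, and then reading off stability as an immediate corollary.

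For existence I would take the unique $u\in H_0^{\rm Gd}\Lambda^j$ solving \eqref{4thorderpdeweakform} (for which $\lambda=0$ has already been proven) and show that the pair $(u,0)\in H_0^{\rm Gd}\Lambda^j\times H_0\Lambda^{j-1}$ satisfies \eqref{4thorderpdeweakformag}. Equation \eqref{4thorderpdeweakformag1} is immediate. For \eqref{4thorderpdeweakformag2}, whose test space has been enlarged from $H_0\Lambda^{j-1}/\dd H_0\Lambda^{j-2}$ to all of $H_0\Lambda^{j-1}$, I would split an arbitrary $\mu\in H_0\Lambda^{j-1}$ as $\mu=\mu^\perp+\dd\eta$ with $\mu^\perp\in H_0\Lambda^{j-1}/\dd H_0\Lambda^{j-2}$ and $\eta\in H_0\Lambda^{j-2}$. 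The $\mu^\perp$ piece is covered by \eqref{4thorderpdeweakform2}, so only the $\dd\eta$ piece requires work: I must verify $(u,\dd\dd\eta)-(g,\dd\eta)=0$. The first summand vanishes by $\dd\circ\dd=0$, and for the second, invoking the structural hypothesis $g=\delta w$ with $w\in H_0^{\rm Gd}\Lambda^j$ and applying \eqref{greenidentity} with $\omega=\dd\eta$ and $\mu=w$, I obtain
\[
(\dd\eta,\delta w) = (\dd\dd\eta,w)-\int_{\partial\Omega}\tr(\dd\eta)\wedge\tr(\star w) = 0,
\]
since $\dd\dd\eta=0$ and $\tr(\dd\eta)=\dd\tr\eta=0$ follows from $\eta\in H_0\Lambda^{j-2}$.

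For uniqueness I would consider the homogeneous problem ($f=g=0$) and test \eqref{4thorderpdeweakformag1} with $v=u$ and \eqref{4thorderpdeweakformag2} with $\mu=\lambda$; subtracting yields $|\dd u|_1^2+\|\lambda\|^2=0$, which forces $\dd u=0$ in $H_0^1\Lambda^{j+1}$ and $\lambda=0$. When $j\geq 1$, the exactness of $H_0\Lambda^\bullet$ on the contractible polytope $\Omega$ produces some $\theta\in H_0\Lambda^{j-1}$ with $u=\dd\theta$; testing the homogeneous \eqref{4thorderpdeweakformag2} with $\mu=\theta$ then gives $\|u\|^2=\|\dd\theta\|^2=0$, hence $u=0$. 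The case $j=0$ reduces to the standard biharmonic variational problem, and $u=0$ follows from Poincar\'e in $H_0^1$. Equivalence with \eqref{4thorderpdeweakform} and the stability bound \eqref{eq:4thorderpdestability0} are then immediate, since the unique solution is precisely the pair $(u,0)$ inherited from \eqref{4thorderpdeweakform}.

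The main obstacle I expect is the integration-by-parts step establishing $(g,\dd\eta)=0$: this is where the structural assumption $g\in\delta H_0^{\rm Gd}\Lambda^j$ (as opposed to a generic $L^2$ datum) becomes essential, and the commutation $\tr(\dd\eta)=\dd\tr\eta$ must be invoked carefully to kill the boundary term for $w$ with only the tangential trace vanishing. A secondary supporting ingredient is the Poincar\'e-lemma-type exactness of $H_0\Lambda^\bullet$ at the relevant index on a contractible polytope, used to produce the potential $\theta$ in the uniqueness step.
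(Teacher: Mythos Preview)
Your proof is correct but takes a genuinely different route from the paper's. The paper establishes well-posedness abstractly via Zulehner's saddle-point theory: it verifies the norm equivalences
\[
\|u\|+\|\dd u\|_1 \eqsim \|\nabla\dd u\|+\sup_{\mu\in H_0\Lambda^{j-1}}\frac{(u,\dd\mu)}{\|\mu\|_{H\Lambda^{j-1}}}, \qquad \|\lambda\|_{H\Lambda^{j-1}} \eqsim \|\lambda\|+\sup_{v\in H_0^{\rm Gd}\Lambda^j}\frac{(v,\dd\lambda)}{\|v\|+\|\dd v\|_1},
\]
and then proves equivalence in the direction \emph{opposite} to yours: it starts from a solution of \eqref{4thorderpdeweakformag}, tests with $v=\dd\lambda$ to get $\dd\lambda=0$, writes $\lambda=\dd\omega$ with $\omega\in H_0\Lambda^{j-2}$, and then tests with $\mu=\lambda$, using $(g,\dd\omega)=(\delta g,\omega)=0$ to conclude $\lambda=0$. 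Your argument is more elementary---it sidesteps the abstract saddle-point machinery in favor of constructing the solution from \eqref{4thorderpdeweakform} and a direct energy estimate for uniqueness---but it yields well-posedness only for the restricted data class $f\in\ker(\delta)$, $g\in\delta H_0^{\rm Gd}\Lambda^j$, whereas the paper's approach delivers abstract stability for arbitrary right-hand sides, which is what makes the argument template reusable for the structurally similar subproblems later (cf.\ Theorem~\ref{thm:unisoldecoupledformnew}). As a minor simplification, your verification of $(g,\dd\eta)=0$ can be shortened: since $\delta g=\delta\delta w=0$ and $\tr\eta=0$ for $\eta\in H_0\Lambda^{j-2}$, integration by parts \eqref{greenidentity} gives $(g,\dd\eta)=(\delta g,\eta)=0$ directly, without needing to unpack $g=\delta w$ and argue about $\tr(\dd\eta)$.
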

\begin{proof}
By the stability \eqref{eq:4thorderpdestability} with $\lambda=0$, we have for $u\in H_0^{\rm Gd}\Lambda^j$ that
\begin{align*}
 \|u\|+\|\dd u\|_1 & \lesssim \sup_{v\in H_0^{\rm Gd}\Lambda^j, \mu\in H_0\Lambda^{j-1}}\frac{(\nabla\dd u, \nabla\dd v)+(u, \dd\mu)}{\|v\|+\|\dd v\|_1+\|\mu\|_{H\Lambda^{j-1}}} \\
&\leq \|\nabla\dd u\|+\sup_{\mu\in H_0\Lambda^{j-1}}\frac{(u, \dd\mu)}{\|\mu\|_{H\Lambda^{j-1}}}.
\end{align*}
Then we have
\begin{equation*}
\|u\|+\|\dd u\|_1 \eqsim \|\nabla\dd u\|+\sup_{\mu\in H_0\Lambda^{j-1}}\frac{(u, \dd\mu)}{\|\mu\|_{H\Lambda^{j-1}}}\qquad\forall~u\in H_0^{\rm Gd}\Lambda^j.
\end{equation*}
It can be verified that
\begin{equation*}
\|\lambda\|_{H\Lambda^{j-1}}\eqsim \|\lambda\|+\sup_{v\in H_0^{\rm Gd}\Lambda^j}\frac{(v, \dd\lambda)}{\|v\|+\|\dd v\|_1}\qquad\forall~\lambda\in H_0\Lambda^{j-1}.
\end{equation*}
By applying the Zulehner theory \cite{Zulehner2011}, we conclude the well-posedness of the variational formulation \eqref{4thorderpdeweakformag} from the last two equivalences.

Assume $(u, \lambda)\in H_0^{\rm Gd}\Lambda^j\times H_0\Lambda^{j-1}$ is the solution of the variational formulation~\eqref{4thorderpdeweakformag}. 
Thanks to $\delta f=0$, choose $v=\dd\lambda$ in \eqref{4thorderpdeweakformag1} to obtain $\dd\lambda=0$. Then $\lambda=\dd\omega$ with $\omega\in H_0\Lambda^{j-2}$.
By choosing $\mu=\lambda$ in \eqref{4thorderpdeweakformag2}, we have $\lambda=0$ from $(g, \lambda)=(g, \dd\omega)=(\delta g,\omega)=0$. Therefore, $(u, \lambda)$ satisfies the variational formulation \eqref{4thorderpdeweakform}.
\end{proof}

\begin{example}\rm
When $j=0$, the variational formulation \eqref{4thorderpdeweakformag} of the biharmonic equation~\eqref{biharmoniceqn} is to find $u\in H_{0}^2(\Omega)$ such that
\begin{align*}
(\nabla^2 u, \nabla^2 v) &=(f,v) \qquad\quad \forall~v\in H_{0}^2(\Omega).
\end{align*}
\end{example}

\begin{example}\rm
When $j=1$ and $d=3$, the variational formulation \eqref{4thorderpdeweakformag} of the quad-curl problem~\eqref{quadcurleqn} is to find $u\in H_{0}(\grad\curl,\Omega)$ and $\lambda\in H_{0}^1(\Omega)$ such that
\begin{align*}
(\nabla\curl u, \nabla\curl v)+(v, \nabla\lambda)&=(f,v) \quad\quad \forall\,\, v\in H_{0}(\grad\curl,\Omega), \\
(u, \nabla\mu)-(\lambda,\mu)&=(g,\mu) \quad\quad \forall\,\, \mu\in H_0^1(\Omega),
\end{align*}
where $H_{0}(\grad\curl,\Omega):= H_0^{\rm Gd}\Lambda^1$.
\end{example}

\begin{example}\rm
When $j=d-1$, the variational formulation \eqref{4thorderpdeweakformag} of the fourth-order div problem~\eqref{quaddiveqn} is to find $u\in H_{0}(\grad\div,\Omega)$ and $\lambda\in H_0(\curl,\Omega)$ such that
\begin{align*}
(\nabla\div u, \nabla\div v)+(v, \curl\lambda)&=(f,v) \quad\quad \forall\,\, v\in H_{0}(\grad\div,\Omega), \\
(u, \curl\mu)-(\lambda,\mu)&=(g,\mu) \quad\quad \forall\,\, \mu\in H_0(\curl,\Omega),
\end{align*}
where $H_{0}(\grad\div,\Omega):= H_0^{\rm Gd}\Lambda^{d-1}$ and $H_0(\curl,\Omega)=H_0\Lambda^{d-2}$.
\end{example}

\begin{remark}\rm
When $\delta f$ is not zero, the weak formulation \eqref{4thorderpdeweakform} of problem~\eqref{4thorderpde} is equivalent to
find $u\in H_0^{\rm Gd}\Lambda^j$, $\lambda,\lambda_0\in H_0\Lambda^{j-1}$ and $r\in H_0\Lambda^{j-2}$
 such that
\begin{subequations}\label{4thorderpdeweakformgeneral}
\begin{align}
\label{4thorderpdeweakformgeneral1}
(\dd \lambda, \dd \mu)+(\mu, \dd r )&=(f,\dd\mu) \quad\qquad\;\;\, \forall\,\, \mu\in H_0\Lambda^{j-1}, \\
\label{4thorderpdeweakformgeneral2}
(\lambda, \dd s)-(r,s)&=0 \qquad\qquad\qquad\; \forall\,\, s\in H_0\Lambda^{j-2},\\
\label{4thorderpdeweakformgeneral3}
(\nabla\dd u, \nabla\dd v)+(v, \dd\lambda_0)&=(f-\dd \lambda,v) \quad\quad \forall\,\, v\in H_0^{\rm Gd}\Lambda^j, \\
\label{4thorderpdeweakformgeneral4}
(u, \dd\mu_0)-(\lambda_0,\mu_0)&=(g,\mu_0) \qquad\qquad\, \forall\,\, \mu_0\in H_0\Lambda^{j-1}.
\end{align}
\end{subequations}
In the variational formulation \eqref{4thorderpdeweakformgeneral}, the variables $ u $ and $ \lambda $ are decoupled. Specifically, we first solve equations \eqref{4thorderpdeweakformgeneral1}-\eqref{4thorderpdeweakformgeneral2} to determine $\lambda$, and then solve equations \eqref{4thorderpdeweakformgeneral3}-\eqref{4thorderpdeweakformgeneral4} to obtain $u$. Note that this formulation does not involve any quotient spaces, and the solutions satisfy $r = 0$ and $\lambda_0 = 0$.
\end{remark}

\section{Decoupled variational formulations}\label{sec:decoupleform}

We shall develop two decoupled variational formulations for the variational formulation \eqref{4thorderpdeweakform} in this section: decouple the fourth-order exterior differential equation~\eqref{4thorderpde} into two second-order exterior differential equations and one generalized Stokes equation.

\subsection{Decoupled formulation with quotient spaces}

We will apply the framework in~\cite{ChenHuang2018} to decouple the variational formulation \eqref{4thorderpdeweakform} into lower order differential equations.

Recall the de Rham complex \cite{CostabelMcIntosh2010,ArnoldFalkWinther2006,Arnold2018}
\begin{equation}\label{eq:deRham}
H^*\Lambda^{j+4}\xrightarrow{\delta} H^*\Lambda^{j+3}\xrightarrow{\delta} L^{2}\Lambda^{j+2}\xrightarrow{\delta} H^{-1}\Lambda^{j+1} \xrightarrow{\delta}H^{-2}\Lambda^{j}\cap\ker(\delta) \xrightarrow{}0, 
\end{equation}
which is exact on the contractible domain $\Omega$. 
Applying the tilde operation in \cite{ChenHuang2022b},
the de Rham complex \eqref{eq:deRham} implies the exact de Rham complex
\begin{equation}\label{eq:deRham1}
H^*\Lambda^{j+4}\xrightarrow{\delta} H^*\Lambda^{j+3}\xrightarrow{\delta} L^{2}\Lambda^{j+2}\xrightarrow{\delta} H^{-1}(\delta, \Lambda^{j+1})\xrightarrow{\delta}H^{-1}\Lambda^{j} \cap\ker(\delta)\xrightarrow{}0,
\end{equation}
where $H^{-1}(\delta, \Lambda^{j+1}):=\{\omega\in H^{-1}\Lambda^{j+1}: \delta\omega\in H^{-1}\Lambda^{j}\}$. We understand $H^{-1}(\delta, \Lambda^{d})=L_0^2(\Omega)$.
With the de Rham complex~\eqref{eq:deRham1}, 
we build up the following commutative
diagram
\begin{equation}\label{cd}
		\begin{array}{c}
			\xymatrix{
 H_{0}^{1}\Lambda^{j+1} \ar[r]^-{\Delta} &  H^{-1}\Lambda^{j+1}  \\
L^{2}\Lambda^{j+2}/\delta H^*\Lambda^{j+3} \ar[r]^-{\delta } &  H^{-1}(\delta, \Lambda^{j+1})\ar@{}[u]|{\bigcup}
\ar[r]^-{\delta } & H^{-1}\Lambda^{j} \cap\ker(\delta) \ar[r]^-{} & 0. \\
& \ar[u]^{I} H_{0}\Lambda^{j+1} & \ar[l]_-{\dd} H_{0}\Lambda^{j}/\dd H_{0}\Lambda^{j-1}
\ar[u]_{\delta\dd}
}
\end{array}
\end{equation}

To derive Helmholtz decompositions, we first present the existence of regular potentials. 
\begin{lemma}
Let $0\leq j\leq d-1$.
We have
\begin{equation}\label{eq:dH1}
\dd H_{0}\Lambda^{j}=\dd H_{0}^1\Lambda^{j},\quad\delta H^*\Lambda^{j}=\delta H^1\Lambda^{j}.\end{equation}
\end{lemma}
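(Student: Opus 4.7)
One inclusion in each identity is free, since $H_0^1\Lambda^j\subseteq H_0\Lambda^j$ and $H^1\Lambda^j\subseteq H^*\Lambda^j$ give immediately $\dd H_0^1\Lambda^j\subseteq \dd H_0\Lambda^j$ and $\delta H^1\Lambda^j\subseteq \delta H^*\Lambda^j$. The content of the lemma is to produce, for an arbitrary $\omega\in H_0\Lambda^j$ (resp.\ $H^*\Lambda^j$), a regular representative in $H_0^1\Lambda^j$ (resp.\ $H^1\Lambda^j$) whose exterior derivative (resp.\ codifferential) agrees with that of $\omega$. On the contractible Lipschitz domain $\Omega$ this is precisely the role of a regular potential / regular decomposition result.

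For the first identity, the plan is as follows. Given $\omega\in H_0\Lambda^j$, set $\mu:=\dd\omega$. Since $\omega$ is the $H\Lambda^j$-limit of compactly supported smooth forms, $\mu$ lies in $H_0\Lambda^{j+1}$ and satisfies $\dd\mu=0$. On a contractible domain one then invokes a regular potential lemma (a Bogovski\u{\i}-type integral representation, or equivalently the regular decomposition of \cite{CostabelMcIntosh2010}) to produce $\tilde{\omega}\in H_0^1\Lambda^j$ with $\dd\tilde{\omega}=\mu$. This yields $\dd\omega=\dd\tilde{\omega}\in \dd H_0^1\Lambda^j$, closing the inclusion. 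The case $j=0$ and the corner case $j=d-1$ (where $H_0\Lambda^{j+1}=H_0\Lambda^d$ reduces to $H_0^1\Lambda^d\cap L_0^2$) are consistent with the cited result.

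For the second identity, the plan is to reduce to the first one via the Hodge star. The relation $\star\delta=(-1)^j\dd\star$ together with the fact that $\star\colon L^2\Lambda^j\to L^2\Lambda^{d-j}$ restricts to isomorphisms
\begin{equation*}
\star\colon H^{*}\Lambda^j\xrightarrow{\ \cong\ } H\Lambda^{d-j},\qquad \star\colon H^1\Lambda^j\xrightarrow{\ \cong\ } H^1\Lambda^{d-j}
\end{equation*}
transforms the desired equality $\delta H^{*}\Lambda^j=\delta H^1\Lambda^j$ into $\dd H\Lambda^{d-j}=\dd H^1\Lambda^{d-j}$, i.e.\ the analogue of the first identity with the boundary condition dropped. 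I would then apply the corresponding regular potential result without boundary conditions: for any closed $\mu\in H\Lambda^{d-j+1}$ there is $\tilde\omega\in H^1\Lambda^{d-j}$ with $\dd\tilde\omega=\mu$, again standard on a contractible domain.

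The main obstacle, as expected, is the regular potential step in the $H_0$ setting: extending a closed form in $H_0\Lambda^{j+1}$ by zero and then extracting an $H_0^1\Lambda^j$ primitive is less automatic than the interior counterpart, because the compact-support condition must be preserved through the Poincar\'e/Bogovski\u{\i} operator. For contractible Lipschitz $\Omega$ this is known, and I would simply quote \cite{CostabelMcIntosh2010} rather than reconstruct it. Everything else is bookkeeping with the Hodge star.
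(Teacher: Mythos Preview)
Your proposal is correct and follows essentially the same route as the paper: both arguments identify $\dd\omega$ (resp.\ $\delta\omega$) as a closed form in the appropriate space and then invoke the regular potential result of Costabel--McIntosh (Theorem~1.1 in \cite{CostabelMcIntosh2010}) to produce an $H^1$ primitive. The paper phrases the first step as the exactness statement $\dd H_0\Lambda^j = H_0\Lambda^{j+1}\cap\ker(\dd)$ before citing \cite{CostabelMcIntosh2010}, and for the second identity simply says ``similar'' where you spell out the Hodge-star reduction to the version without boundary conditions; these are cosmetic differences only.
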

\begin{proof}
By the de Rham complex \cite{CostabelMcIntosh2010,ArnoldFalkWinther2006,Arnold2018}, it holds that
\begin{equation*}
\dd H_{0}\Lambda^{j}= H_{0}\Lambda^{j+1}\cap\ker(\dd).
\end{equation*}
By Theorem 1.1 in \cite{CostabelMcIntosh2010}, $H_{0}\Lambda^{j+1}\cap\ker(\dd)=\dd H_{0}^1\Lambda^{j}$. Hence,
\begin{equation*}
\dd H_{0}\Lambda^{j}=\dd H_{0}^1\Lambda^{j}.
\end{equation*}
The proof of $\delta H^*\Lambda^{j}=\delta H^1\Lambda^{j}$ is similar.
\end{proof}

The regular potentials in \eqref{eq:dH1} are related to the regular decompositions in \cite{HiptmairXu2007,Hiptmair2002,PasciakZhao2002}.

\begin{lemma}
Let $0\leq j\leq d-1$.
We have 
\begin{equation}\label{eq:Hdualcharac}
(H_{0}\Lambda^{j}/\dd H_{0}\Lambda^{j-1})' = H^{-1}\Lambda^{j} \cap\ker(\delta),
\end{equation}
and Helmholtz decompositions
\begin{align}\label{helmholtzdecomp}
& (H_{0}\Lambda^{j+1})'=H^{-1}(\delta, \Lambda^{j+1})=\dd H_{0}\Lambda^{j}\oplus
\delta L^{2}\Lambda^{j+2}, \\
\label{eq:2formHelmholtzdecomp1}
&\qquad\quad L^{2}\Lambda^{j+2}=\dd H_{0}^1\Lambda^{j+1} \oplus^{\perp} \delta H^*\Lambda^{j+3}, \\
\label{eq:2formHelmholtzdecomp2}
&\qquad\quad L^{2}\Lambda^{j+2}=\dd H_{0}^1\Lambda^{j+1} \oplus^{\perp} \delta H^1\Lambda^{j+3},
\end{align}
where $\oplus^{\perp}$ means the $L^2$ orthogonal direct sum.
\end{lemma}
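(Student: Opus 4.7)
The plan is to derive all four identities from the exactness of the tilde de Rham complex~\eqref{eq:deRham1}, the regular potential identity~\eqref{eq:dH1}, and the standard Hilbert-space duality between closed subspaces and annihilators.

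First I would handle~\eqref{eq:Hdualcharac}. Since $\dd H_0\Lambda^{j-1}$ is closed in $H_0\Lambda^j$, the orthogonal complement $H_{0}\Lambda^{j}/\dd H_{0}\Lambda^{j-1}$ is a Hilbert space whose dual is naturally identified with the annihilator of $\dd H_0\Lambda^{j-1}$ in $(H_0\Lambda^j)' = H^{-1}\Lambda^j$. A functional $\phi$ in $H^{-1}\Lambda^j$ annihilates $\dd H_0\Lambda^{j-1}$ precisely when $\langle\phi, \dd\omega\rangle = \langle\delta\phi,\omega\rangle = 0$ for every $\omega \in H_0\Lambda^{j-1}$, which is exactly the weak statement $\delta\phi = 0$, yielding~\eqref{eq:Hdualcharac}.

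Next I would address the Helmholtz decomposition~\eqref{helmholtzdecomp}. The first equality $(H_0\Lambda^{j+1})' = H^{-1}(\delta, \Lambda^{j+1})$ is the content of the tilde operation in~\cite{ChenHuang2022b} and I would invoke it directly. For the direct-sum structure, given $\phi \in H^{-1}(\delta, \Lambda^{j+1})$, set $\chi := \delta\phi \in H^{-1}\Lambda^j \cap \ker(\delta)$ (using $\delta^2 = 0$). By~\eqref{eq:Hdualcharac} this is a bounded functional on $H_0\Lambda^j/\dd H_0\Lambda^{j-1}$, and the Poincar\'e inequality~\cite[(4.7)]{Arnold2018} makes $(\dd u, \dd v)$ coercive on that quotient. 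Lax--Milgram then produces a unique $u$ with $(\dd u, \dd v) = \langle\chi, v\rangle$ for all $v$, i.e., $\delta\dd u = \delta\phi$. Consequently $\phi - \dd u \in \ker(\delta)$ inside $H^{-1}(\delta, \Lambda^{j+1})$, and exactness of~\eqref{eq:deRham1} at $H^{-1}(\delta, \Lambda^{j+1})$ delivers $p \in L^2\Lambda^{j+2}$ with $\phi - \dd u = \delta p$. The sum is direct: if $\dd u = \delta p$ in $H^{-1}(\delta, \Lambda^{j+1})$, then testing against $\dd u \in H_0\Lambda^{j+1}$ (noting $\tr(\dd u) = \dd \tr u = 0$) yields $(\dd u, \dd u) = (p, \dd\dd u) = 0$, hence $\dd u = 0$ and $\delta p = 0$.

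For~\eqref{eq:2formHelmholtzdecomp1} I would first verify orthogonality of $\dd H_0^1\Lambda^{j+1}$ and $\delta H^*\Lambda^{j+3}$ via Green's identity~\eqref{greenidentity}: for $\phi \in H_0^1\Lambda^{j+1}$ and $\omega \in H^*\Lambda^{j+3}$, the boundary term $\int_{\partial\Omega}\tr(\dd\phi)\wedge\tr(\star\omega)$ vanishes because $\tr(\dd\phi) = \dd\tr\phi = 0$, and $(\dd\dd\phi, \omega) = 0$. By~\eqref{eq:dH1} we have $\dd H_0^1\Lambda^{j+1} = \dd H_0\Lambda^{j+1}$, which is closed in $L^2\Lambda^{j+2}$ by Poincar\'e and the closed range theorem. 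An $\omega \in L^2\Lambda^{j+2}$ lies in $(\dd H_0\Lambda^{j+1})^\perp$ exactly when $\delta\omega$ vanishes as a functional on $H_0\Lambda^{j+1}$; by exactness of~\eqref{eq:deRham1} at $L^2\Lambda^{j+2}$, this kernel is precisely $\delta H^*\Lambda^{j+3}$, proving~\eqref{eq:2formHelmholtzdecomp1}. The decomposition~\eqref{eq:2formHelmholtzdecomp2} follows immediately from the identity $\delta H^*\Lambda^{j+3} = \delta H^1\Lambda^{j+3}$ in~\eqref{eq:dH1}. The main obstacle is the dual identification $(H_0\Lambda^{j+1})' = H^{-1}(\delta, \Lambda^{j+1})$ together with the exactness of the tilde complex~\eqref{eq:deRham1}, both of which rely on the framework of~\cite{ChenHuang2018,ChenHuang2022b}; with those in hand, everything else is routine Hilbert-space manipulation combined with~\eqref{greenidentity} and~\eqref{eq:dH1}.
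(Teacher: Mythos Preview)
Your argument for~\eqref{eq:Hdualcharac} contains a genuine gap: the identification $(H_0\Lambda^j)'=H^{-1}\Lambda^j$ is false for $1\le j\le d-1$. Recall that $H^{-1}\Lambda^j$ is by convention the dual of $H_0^1\Lambda^j$, whereas $H_0\Lambda^j$ (the domain of $\dd$ with vanishing trace) is strictly larger than $H_0^1\Lambda^j$, so its dual is strictly smaller. Concretely, for $j=1$ in three dimensions $H_0\Lambda^1=H_0(\curl)$ and a generic element of $H^{-1}\Lambda^1$ does not extend to a bounded functional on $H_0(\curl)$. A second, related problem: even if you start from a $\phi\in H^{-1}\Lambda^j$, the pairing $\langle\phi,\dd\omega\rangle$ with $\omega\in H_0\Lambda^{j-1}$ is not defined, since $\dd\omega$ is only guaranteed to lie in $H_0\Lambda^j$, not in $H_0^1\Lambda^j$.

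The paper sidesteps this by working intrinsically: the Poincar\'e inequality makes $(\dd\cdot,\dd\cdot)$ an inner product on the quotient, so Riesz--Fr\'echet identifies the dual with the range $\delta\dd H_0\Lambda^j$; the Hodge decomposition $L^2\Lambda^{j+1}=\dd H_0\Lambda^j\oplus^\perp\delta H^*\Lambda^{j+2}$ then gives $\delta\dd H_0\Lambda^j=\delta L^2\Lambda^{j+1}$, and exactness of the de~Rham complex~\eqref{eq:deRham} yields $\delta L^2\Lambda^{j+1}=H^{-1}\Lambda^j\cap\ker(\delta)$. Your annihilator idea can be repaired along these lines, but as written it relies on a false dual identification. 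The remainder of your proposal---the direct Lax--Milgram construction for~\eqref{helmholtzdecomp} and the orthogonal-complement argument for~\eqref{eq:2formHelmholtzdecomp1}--\eqref{eq:2formHelmholtzdecomp2}---is sound and in fact more explicit than the paper's appeal to the framework of~\cite{ChenHuang2018} and the standard Hodge decomposition~\eqref{eq:hodgedecomp}.
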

\begin{proof}
Due to the Poincar\'e inequality, $(\dd\cdot, \dd\cdot)$ is an inner product on the quotient space  $H_{0}\Lambda^{j}/\dd H_{0}\Lambda^{j-1}$, by Riesz-Fr\'echet representation theorem \cite[Theorem 5.5]{Brezis2011}, which implies $\delta\dd: H_{0}\Lambda^{j}/\dd H_{0}\Lambda^{j-1}\to (H_{0}\Lambda^{j}/\dd H_{0}\Lambda^{j-1})'$ is isomorphic. Then
\begin{equation*}
(H_{0}\Lambda^{j}/\dd H_{0}\Lambda^{j-1})'=\delta\dd(H_{0}\Lambda^{j}/\dd H_{0}\Lambda^{j-1})=\delta\dd H_{0}\Lambda^{j}.
\end{equation*}
Recall the Hodge decomposition \cite{ArnoldFalkWinther2006,Arnold2018}
\begin{equation}\label{eq:hodgedecomp}
L^{2}\Lambda^{j+1}=\dd H_{0}\Lambda^{j} \oplus^{\perp} \delta H^*\Lambda^{j+2}.
\end{equation}
Hence it follows
\begin{equation*}
(H_{0}\Lambda^{j}/\dd H_{0}\Lambda^{j-1})'=\delta L^{2}\Lambda^{j+1},
\end{equation*}
which together with complex \eqref{eq:deRham} indicates \eqref{eq:Hdualcharac}.

Thanks to \eqref{eq:Hdualcharac}, applying the framework in \cite{ChenHuang2018} to the diagram \eqref{cd}, we get the Helmholtz decomposition \eqref{helmholtzdecomp}.

By \eqref{eq:dH1}, 
\begin{equation*}
\dd H_{0}\Lambda^{j+1}=\dd H_{0}^1\Lambda^{j+1},\quad \delta H^*\Lambda^{j+3}=\delta H^1\Lambda^{j+3}.
\end{equation*}
We conclude decomposition \eqref{eq:2formHelmholtzdecomp1} from \eqref{eq:hodgedecomp}, and decomposition \eqref{eq:2formHelmholtzdecomp2} from \eqref{eq:2formHelmholtzdecomp1}.
\end{proof}

\begin{remark}\rm
When $j=d-1$, the Helmholtz decompositions \eqref{eq:2formHelmholtzdecomp1}-\eqref{eq:2formHelmholtzdecomp2} become
\begin{equation*}
\{0\}=\int_{\Omega}(H_0^1(\Omega)\cap L_0^2(\Omega))\dx.
\end{equation*}
\end{remark}

Let $f\in L^2\Lambda^j\cap\ker(\delta)$ and $g\in L^2\Lambda^{j-1}\cap\delta H_0^{\rm Gd}\Lambda^j$ with $0\leq j\leq d-1$.
A mixed formulation based on the commutative diagram \eqref{cd} is to find $(\gamma, u)\in  H^{-1}(\delta, \Lambda^{j+1})\times H_{0}\Lambda^{j}$
such that $\delta u=g$ and
\begin{align*}
(\gamma,\beta)_{-1}-\langle\delta \beta,u\rangle&=0  &&\forall \,\,\beta\in H^{-1}(\delta, \Lambda^{j+1}),\\
\langle\delta \gamma,v\rangle&=(f,v)  &&\forall \,\,v\in H_{0}\Lambda^{j}/\dd H_{0}\Lambda^{j-1},
\end{align*}
where $(\gamma,\beta)_{-1}:
=-\langle\Delta^{-1}\gamma,\beta\rangle_{H_{0}^{1}\Lambda^{j+1}\times H^{-1}\Lambda^{j+1}}=
-\langle\gamma,\Delta^{-1}\beta\rangle_{H^{-1}\Lambda^{j+1}\times H_{0}^{1}\Lambda^{j+1}}$.
By introducing the new variable $\phi=-\Delta^{-1}\gamma\in
H_{0}^{1}\Lambda^{j+1}$, the unfolded formulation is
to find $(\gamma,u,\phi)\in  H^{-1}(\delta, \Lambda^{j+1})\times H_{0}\Lambda^{j}\times H_{0}^{1}\Lambda^{j+1}$
such that $\delta u=g$ and
\begin{subequations}\label{unfoldedform}
\begin{align}
(\nabla\phi,\nabla\psi)+\langle \gamma,\dd v-\psi\rangle&=(f,v)
 &&\forall \,\, (\upsilon,\psi)\in (H_{0}\Lambda^{j}/\dd H_{0}\Lambda^{j-1})\times H_{0}^{1}\Lambda^{j+1},
 \label{unfoldedform1}\\
 \langle\beta,\dd u-\phi\rangle&=0
 &&\forall\,\, \beta\in H^{-1}(\delta, \Lambda^{j+1}).\label{unfoldedform2}
\end{align}
\end{subequations}

According to the Helmholtz decomposition (\ref{helmholtzdecomp}), we can set $\gamma=\dd w-\delta p$ and $\beta=\dd\chi-\delta q$ with $w, \chi\in H_{0}\Lambda^{j}/\dd H_{0}\Lambda^{j-1}$ and $ p,  q\in L^{2}\Lambda^{j+2}$.
Then the formulation~\eqref{unfoldedform} is decomposed as follows \cite[Section 3.2]{ChenHuang2018}: find $u\in H_{0}\Lambda^{j}$, $w\in H_{0}\Lambda^{j}/\dd H_{0}\Lambda^{j-1}$,
$\phi\in H_{0}^{1}\Lambda^{j+1}$ and
$ p\in L^{2}\Lambda^{j+2}/\delta H^*\Lambda^{j+3}$ such that $\delta u=g$ and
\begin{align*}
(\dd w,\dd v)&=(f,v)
  &&\forall \,\, v\in H_{0}\Lambda^{j}/\dd H_{0}\Lambda^{j-1},
 \\
 (\nabla\phi,\nabla\psi)+(\dd \psi, p)&=(\dd w,\psi)
  &&\forall \,\, \psi\in H_{0}^{1}\Lambda^{j+1},
 \\
 (\dd \phi, q) &=0  &&\forall \,\,  q \in L^{2}\Lambda^{j+2}/\delta H^*\Lambda^{j+3},\\
  (\dd u,\dd \chi)&=(\phi,\dd\chi) &&\forall \,\, \chi\in H_{0}\Lambda^{j}/\dd H_{0}\Lambda^{j-1}.
\end{align*}

We further employ the Lagrange multiplier to deal with the constraint in the quotient spaces $H_{0}\Lambda^{j}/\dd H_{0}\Lambda^{j-1}$ and 
$L^{2}\Lambda^{j+2}/\delta H^*\Lambda^{j+3}$, which induces the following variational formulation: find $u, w\in H_{0}\Lambda^{j}$, $\lambda,z\in H_0\Lambda^{j-1}/\dd H_0\Lambda^{j-2}$,
$\phi\in H_{0}^{1}\Lambda^{j+1}$,
$ p\in L^{2}\Lambda^{j+2}$ and $r\in H^*\Lambda^{j+3}/\delta H^*\Lambda^{j+4}$ such that
\begin{subequations}\label{decoupleform}
\begin{align}
(\dd w,\dd v)+(v,\dd\lambda)&=(f,v), \label{decoupledform1}
 \\
(w,\dd\eta)&=0, \label{decoupledform10}
 \\
 (\nabla\phi,\nabla\psi)+(\dd\psi+\delta s, p)&=(\dd w,\psi), \label{decoupledform2}\\
 (\dd \phi+\delta r, q) &=0,\label{decoupledform3}\\
  (\dd u,\dd \chi)+(\chi,\dd z)&=(\phi,\dd\chi), \label{decoupledform4} \\
(u,\dd\mu)&=(g,\mu), \label{decoupledform40}
\end{align}
for any $v,\chi\in H_{0}\Lambda^{j}$, $\eta,\mu\in H_0\Lambda^{j-1}/\dd H_0\Lambda^{j-2}$,
$\psi\in H_{0}^{1}\Lambda^{j+1}$,
$ q\in L^2\Lambda^{j+2}$ and $s\in H^*\Lambda^{j+3}/\delta H^*\Lambda^{j+4}$.
\end{subequations}


The  well-posedness of problem \eqref{decoupledform2}-\eqref{decoupledform3} is related to complex \eqref{graddcomplex}.
\begin{lemma}
Let $0\leq j\leq d-2$. The complex
\begin{equation}\label{graddcomplex}
H_0^{\rm Gd}\Lambda^j\times H^*\Lambda^{j+4}\xrightarrow{\begin{pmatrix}\dd & 0\\
0&\delta\end{pmatrix}}H_{0}^{1}\Lambda^{j+1}\times H^*\Lambda^{j+3} \xrightarrow{(\dd, \delta)} L^{2}\Lambda^{j+2} \to 0
\end{equation}
is exact.
\end{lemma}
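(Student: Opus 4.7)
The plan is to verify the three standard ingredients of exactness: that the composition vanishes, that the second map is surjective onto $L^2\Lambda^{j+2}$, and that the kernel of the second map coincides with the image of the first.

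First I would dispose of the composition: for any $(u,\omega)\in H_0^{\rm Gd}\Lambda^j\times H^*\Lambda^{j+4}$, the composed map produces $\dd(\dd u)+\delta(\delta\omega)$, which vanishes by $\dd^2=0$ and $\delta^2=0$. Surjectivity is the content of the Helmholtz decomposition \eqref{eq:2formHelmholtzdecomp2}: every $p\in L^2\Lambda^{j+2}$ splits as $p=\dd\phi+\delta r$ with $\phi\in H_0^1\Lambda^{j+1}$ and $r\in H^1\Lambda^{j+3}\subset H^*\Lambda^{j+3}$, which is exactly the image of $(\phi,r)$ under $(\dd,\delta)$.

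The substantive step is exactness in the middle. Suppose $(\phi,r)\in H_0^1\Lambda^{j+1}\times H^*\Lambda^{j+3}$ satisfies $\dd\phi+\delta r=0$. Since $\dd\phi\in\dd H_0^1\Lambda^{j+1}$ and $\delta r\in\delta H^*\Lambda^{j+3}$, the $L^2$-orthogonality of these two subspaces in \eqref{eq:2formHelmholtzdecomp1} forces $\dd\phi=0$ and $\delta r=0$ separately. I then invoke exactness of the de Rham complex with boundary conditions: because $\phi\in H_0\Lambda^{j+1}\cap\ker(\dd)=\dd H_0\Lambda^j$, together with the regular-potential identity \eqref{eq:dH1} giving $\dd H_0\Lambda^j=\dd H_0^1\Lambda^j$, I may write $\phi=\dd u$ with $u\in H_0^1\Lambda^j$. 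Then $u\in H_0\Lambda^j$ and $\dd u=\phi\in H_0^1\Lambda^{j+1}$, so $u\in H_0^{\rm Gd}\Lambda^j$ by definition. For the second component, the Hodge-star dual of the above exactness (via \eqref{eq:stardelta}) gives $H^*\Lambda^{j+3}\cap\ker(\delta)=\delta H^*\Lambda^{j+4}$, so $r=\delta\omega$ for some $\omega\in H^*\Lambda^{j+4}$. Therefore $(\phi,r)$ is the image of $(u,\omega)$, as required.

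The main obstacle I anticipate is the regularity of the potential $u$: producing $\phi=\dd u$ with merely $u\in H_0\Lambda^j$ is easy from closed-range arguments, but upgrading to $u\in H_0^{\rm Gd}\Lambda^j$ requires $\dd u\in H_0^1\Lambda^{j+1}$, which is automatic here because $\dd u=\phi$ lies in $H_0^1\Lambda^{j+1}$ by hypothesis—so the issue is essentially definitional once one is careful. A secondary technical point is the range restriction $0\le j\le d-2$: this ensures that the spaces $H^*\Lambda^{j+3}$ and $H^*\Lambda^{j+4}$ are meaningful (and nontrivial up to the convention about top-degree forms) so that the Hodge-dual exactness statement applies verbatim.
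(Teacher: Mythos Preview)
Your proof is correct and follows essentially the same route as the paper's: surjectivity from the Helmholtz decomposition~\eqref{eq:2formHelmholtzdecomp1} (or equivalently~\eqref{eq:2formHelmholtzdecomp2}), and middle exactness from the identifications $H_0^1\Lambda^{j+1}\cap\ker(\dd)=\dd H_0^{\rm Gd}\Lambda^j$ and $H^*\Lambda^{j+3}\cap\ker(\delta)=\delta H^*\Lambda^{j+4}$, which you verify in full while the paper simply states them. One minor simplification: the detour through~\eqref{eq:dH1} to get $u\in H_0^1\Lambda^j$ is unnecessary, since as you yourself observe, any potential $u\in H_0\Lambda^j$ with $\dd u=\phi\in H_0^1\Lambda^{j+1}$ already lies in $H_0^{\rm Gd}\Lambda^j$ by definition.
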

\begin{proof}
Due to decomposition \eqref{eq:2formHelmholtzdecomp1}, we conclude the exactness of complex \eqref{graddcomplex} from $H_{0}^1\Lambda^{j+1}\cap\ker(\dd)=\dd H_0^{\rm Gd}\Lambda^j$ and $H^*\Lambda^{j+3}\cap\ker(\delta)=\delta H^*\Lambda^{j+4}$.
\end{proof}

Indeed, complex \eqref{graddcomplex} is a distinct representation of the following complex
\begin{equation}\label{H2derhamcomplex1}
H_0\Lambda^{j-1}\xrightarrow{\dd} H_0^{\rm Gd}\Lambda^j\xrightarrow{\dd}H_{0}^{1}\Lambda^{j+1} \xrightarrow{\dd} L^{2}\Lambda^{j+2}/\delta H^*\Lambda^{j+3}\to 0.
\end{equation}
Thanks to complex \eqref{graddcomplex}, we can regard $(\dd, \delta)$ as a generalized divergence operator, and problem \eqref{decoupledform2}-\eqref{decoupledform3} as a generalized Stokes equation.

Next we analyze the well-posedness of the decoupled formulation \eqref{decoupleform}, and show its equivalence to the variational formulation \eqref{4thorderpdeweakform}.
The well-posedness of problem~\eqref{decoupledform1}-\eqref{decoupledform10} and problem~\eqref{decoupledform4}-\eqref{decoupledform40} follows from the Poincar\'e inequality \cite{Arnold2018,ArnoldFalkWinther2006}.
Then we focus on the well-posedness of the generalized Stokes equation \eqref{decoupledform2}-\eqref{decoupledform3}. 

Clearly, we have
\begin{equation*}
(\nabla\phi,\nabla\psi)\leq \|\phi\|_1\|\psi\|_1 \quad \forall~\phi, \psi\in H^{1}\Lambda^{j+1}, 
\end{equation*}
\begin{equation*}
(\dd\psi+\delta s, q)\leq (\|\dd\psi\|+\|\delta s\|)\| q\|\quad \forall~\psi\in H^{1}\Lambda^{j+1}, s\in H^*\Lambda^{j+3},  q\in L^{2}\Lambda^{j+2}.
\end{equation*}

\begin{lemma}
Let $0\leq j\leq d-2$.
For $(\psi, s)\in  H_{0}^{1}\Lambda^{j+1}\times(H^*\Lambda^{j+3}/\delta H^*\Lambda^{j+4})$ satisfying $(\dd\psi+\delta s, q)=0$ for all $ q\in L^{2}\Lambda^{j+2}$, we have the coercivity
\begin{align}\label{AEO}
\|\psi\|_{1}^{2}+\|s\|_{H^*\Lambda^{j+3}}^{2}\lesssim |\psi|_{1}^{2}.
\end{align}
\end{lemma}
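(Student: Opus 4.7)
The plan is to reduce the estimate to two Poincaré-type inequalities by first exploiting the constraint to express $\delta s$ in terms of $\dd\psi$. The test condition $(\dd\psi + \delta s, q) = 0$ for all $q \in L^2\Lambda^{j+2}$ is equivalent to the pointwise identity $\delta s = -\dd\psi$ in $L^2\Lambda^{j+2}$, which immediately gives $\|\delta s\| = \|\dd\psi\| \leq C |\psi|_1$, since the exterior derivative is a bounded linear combination of first-order partial derivatives of the coefficients of $\psi$.

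For the control of $\psi$, since $\psi \in H_0^1\Lambda^{j+1}$, the classical Poincaré inequality yields $\|\psi\| \lesssim |\psi|_1$, hence $\|\psi\|_1^2 \lesssim |\psi|_1^2$.

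For the control of $s$, I would first observe that the definition of the quotient space as an orthogonal complement, together with the fact that any $t\in \delta H^*\Lambda^{j+4}$ satisfies $\delta t=0$ (so that $(s,t)_{H^*\Lambda^{j+3}} = (s,t) + (\delta s,\delta t) = (s,t)$), implies that $s$ is $L^2$-orthogonal to $\delta H^*\Lambda^{j+4}$. By the exactness of the de Rham complex \eqref{eq:deRham} at $H^*\Lambda^{j+3}$, we have $\delta H^*\Lambda^{j+4} = H^*\Lambda^{j+3}\cap\ker(\delta)$. Thus $s$ lies in the $L^2$-orthogonal complement of $\ker(\delta)\cap H^*\Lambda^{j+3}$, on which the Poincaré inequality for the codifferential \cite[(4.7)]{Arnold2018} applies to give $\|s\| \lesssim \|\delta s\|$.

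Combining these ingredients gives
\begin{equation*}
\|s\|_{H^*\Lambda^{j+3}}^2 = \|s\|^2 + \|\delta s\|^2 \lesssim \|\delta s\|^2 = \|\dd\psi\|^2 \lesssim |\psi|_1^2,
\end{equation*}
and adding the bound $\|\psi\|_1^2 \lesssim |\psi|_1^2$ yields \eqref{AEO}. The delicate step is the Poincaré inequality for $\delta$ in the third paragraph: it rests on the exactness of the codifferential complex and the fact that $\delta$ has closed range on the relevant orthogonal complement, which is exactly what the Hodge theory on contractible domains provides.
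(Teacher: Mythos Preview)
Your proof is correct. The reasoning is sound at every step: the constraint gives $\delta s=-\dd\psi$ in $L^2\Lambda^{j+2}$; the Poincar\'e inequality in $H_0^1$ controls $\|\psi\|_1$; and your observation that $H^*$-orthogonality to $\delta H^*\Lambda^{j+4}$ coincides with $L^2$-orthogonality (because $\delta t=0$ for such $t$), together with exactness, puts $s$ in the orthogonal complement of $\ker(\delta)$ where the Poincar\'e inequality for $\delta$ applies.

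The paper's proof is shorter and sharper: it simply takes $q=\delta s$ and uses the $L^2$-orthogonality $(\dd\psi,\delta s)=0$ (a consequence of the Hodge decomposition \eqref{eq:2formHelmholtzdecomp1}, or equivalently of integration by parts with $\delta^2=0$) to conclude $\|\delta s\|^2=0$, hence $s=0$ by the quotient condition. The coercivity then reduces to the scalar $H_0^1$ Poincar\'e inequality. Your argument misses this simplification: from your own identity $\delta s=-\dd\psi$ and the same Hodge orthogonality, one gets $\dd\psi=\delta s=0$ immediately. The stronger conclusion $s=0$ (and $r=0$ in the analogous lemma for the augmented formulation) is used explicitly later in the paper, e.g.\ in Theorem~\ref{thm:unisoldecoupledform}, so the paper's route buys information beyond the bare coercivity estimate. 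Your route, on the other hand, is the template one would use when the two summands are not orthogonal and $s$ is genuinely nonzero.
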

\begin{proof}
Notice that
$$(\dd  \psi, q)
 +(\delta s, q)=0 \quad \forall \, q\in  L^{2}\Lambda^{j+2}.$$
Take $ q=\delta s$ to get $s=0$.
Thus, \eqref{AEO} follows from the Poincar\'e inequality for space $H_0^1(\Omega)$ (cf. \cite{Brenner2003}).
\end{proof}

\begin{lemma}
Let $0\leq j\leq d-2$.
For $ q\in L^{2}\Lambda^{j+2}$, it holds the inf-sup condition
\begin{align}\label{AH}
\| q\|\lesssim \sup _{\psi\in  H_{0}^{1}\Lambda^{j+1},\,
s\in H^*\Lambda^{j+3}/\delta H^*\Lambda^{j+4}}
\frac{(\dd\psi+\delta s, q)}{\|\psi\|_{1}+\|s\|_{H^*\Lambda^{j+3}}}.
\end{align}
\end{lemma}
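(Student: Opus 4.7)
The plan is to verify the inf-sup condition by testing the supremum against a specific pair $(\psi_0, s_0)$ obtained from the Helmholtz decomposition of $q$ itself, then establishing regular potential bounds for each component. Given $q\in L^{2}\Lambda^{j+2}$, by the Helmholtz decomposition \eqref{eq:2formHelmholtzdecomp1} there exist $\psi_0\in H_0^1\Lambda^{j+1}$ and $s_0\in H^*\Lambda^{j+3}$ such that
\begin{equation*}
q=\dd\psi_0+\delta s_0,\qquad \|q\|^2=\|\dd\psi_0\|^2+\|\delta s_0\|^2.
\end{equation*}
Since $\delta H^*\Lambda^{j+4}\subset\ker(\delta)$, replacing $s_0$ by $s_0$ minus its $L^2$-projection onto $\delta H^*\Lambda^{j+4}$ leaves $\delta s_0$ unchanged and places the representative in the orthogonal complement $H^*\Lambda^{j+3}/\delta H^*\Lambda^{j+4}$.

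The heart of the argument is to control $\|\psi_0\|_1$ and $\|s_0\|_{H^*\Lambda^{j+3}}$ by $\|q\|$. For $\psi_0$, the decomposition \eqref{eq:2formHelmholtzdecomp1} implies that $\dd H_0^1\Lambda^{j+1}$ is closed in $L^2\Lambda^{j+2}$, so $\dd\colon H_0^1\Lambda^{j+1}/\ker(\dd)\to\dd H_0^1\Lambda^{j+1}$ is a bounded bijection and the open mapping theorem gives a bounded right inverse. Choosing $\psi_0$ as the representative orthogonal to $\ker(\dd)$ and combining with the standard Poincar\'e inequality in $H_0^1\Lambda^{j+1}$ yields $\|\psi_0\|_1\lesssim\|\dd\psi_0\|\leq\|q\|$. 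For $s_0$, the exactness of the de Rham complex \eqref{eq:deRham1} on the contractible domain $\Omega$ gives $H^*\Lambda^{j+3}\cap\ker(\delta)=\delta H^*\Lambda^{j+4}$; hence our chosen $s_0$ is $L^2$-orthogonal to the entire kernel of $\delta|_{H^*\Lambda^{j+3}}$. An identical closed-range/open-mapping argument applied to $\delta\colon H^*\Lambda^{j+3}/\delta H^*\Lambda^{j+4}\to\delta H^*\Lambda^{j+3}$ (which is closed as the other summand in \eqref{eq:2formHelmholtzdecomp1}) yields $\|s_0\|\lesssim\|\delta s_0\|\leq\|q\|$, and thus $\|s_0\|_{H^*\Lambda^{j+3}}\lesssim\|q\|$.

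With these bounds in hand, the conclusion is immediate: using the $L^2$-orthogonality of the Helmholtz components,
\begin{equation*}
(\dd\psi_0+\delta s_0,\, q)=\|\dd\psi_0\|^2+\|\delta s_0\|^2=\|q\|^2,
\end{equation*}
so testing the supremum with $(\psi_0, s_0)$ gives
\begin{equation*}
\sup_{\psi,\,s}\frac{(\dd\psi+\delta s,\,q)}{\|\psi\|_1+\|s\|_{H^*\Lambda^{j+3}}}\;\geq\;\frac{\|q\|^2}{\|\psi_0\|_1+\|s_0\|_{H^*\Lambda^{j+3}}}\;\gtrsim\;\|q\|,
\end{equation*}
which is exactly \eqref{AH}.

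The main obstacle is the pair of regular-potential estimates in the second paragraph; once the Helmholtz decomposition \eqref{eq:2formHelmholtzdecomp1} is taken as given, these reduce to closed-range statements, but identifying $\ker(\delta|_{H^*\Lambda^{j+3}})$ with $\delta H^*\Lambda^{j+4}$ (needed so that $L^2$-orthogonality to $\delta H^*\Lambda^{j+4}$ translates into a Poincar\'e-type control $\|s_0\|\lesssim\|\delta s_0\|$) genuinely uses exactness of the de Rham complex on the contractible $\Omega$. Everything else is a routine application of the open mapping theorem and orthogonality.
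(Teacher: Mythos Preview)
Your proof is correct and follows essentially the same approach as the paper: decompose $q$ via the Helmholtz decomposition \eqref{eq:2formHelmholtzdecomp1}, obtain bounds $\|\psi_0\|_1+\|s_0\|_{H^*\Lambda^{j+3}}\lesssim\|q\|$, and test the supremum against $(\psi_0,s_0)$. The paper simply asserts the potential bounds as part of the decomposition, whereas you spell them out via closed-range/open-mapping arguments; the appeal to the Poincar\'e inequality in $H_0^1\Lambda^{j+1}$ is superfluous once you take the $H^1$-orthogonal representative, but it does no harm.
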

\begin{proof}
By the Helmholtz decomposition \eqref{eq:2formHelmholtzdecomp1},
there exist $\psi\in  H_{0}^{1}\Lambda^{j+1}$ and $s \in H^*\Lambda^{j+3}/\delta H^*\Lambda^{j+4}$ such that
$$
 q=\dd \psi+\delta s,\quad \|\psi\|_{1}+\|s\|_{H^*\Lambda^{j+3}}\lesssim\| q\|.
$$
So
$
(\dd \psi+\delta s, q)
=\| q\|^{2}.$
This means
$$\| q\|(\|\psi\|_{1}+\|s\|_{H^*\Lambda^{j+3}})\lesssim\| q\|^{2}=(\dd \psi+\delta s, q).$$
Then \eqref{AH} follows.
\end{proof}


\begin{theorem}\label{thm:unisoldecoupledform}
Let $0\leq j\leq d-1$.
The generalized Stokes equation \eqref{decoupledform2}-\eqref{decoupledform3} is well-posed.
The mixed formulation \eqref{decoupleform} and
the variational formulation \eqref{4thorderpdeweakform} are equivalent. That is,
if $(w,\lambda)\in H_{0}\Lambda^{j}\times(H_0\Lambda^{j-1}/\dd H_0\Lambda^{j-2})$ is the solution of problem \eqref{decoupledform1}-\eqref{decoupledform10}, $(\phi,  p, r)\in H_{0}^{1}\Lambda^{j+1}\times L^{2}\Lambda^{j+2}\times (H^*\Lambda^{j+3}/\delta H^*\Lambda^{j+4})$ is the solution of problem \eqref{decoupledform2}-\eqref{decoupledform3}, and $(u,z)\in H_{0}\Lambda^{j}\times(H_0\Lambda^{j-1}/\dd H_0\Lambda^{j-2})$ is the solution of problem \eqref{decoupledform4}-\eqref{decoupledform40},
then $r=0$, $\lambda=z=0$, $p\in \dd H_0^1\Lambda^{j+1}$, $\phi=\dd u$, and $(u,\lambda)\in H_0^{\rm Gd}\Lambda^j\times(H_0\Lambda^{j-1}/\dd H_0\Lambda^{j-2})$ satisfies the variational formulation \eqref{4thorderpdeweakform}.
\end{theorem}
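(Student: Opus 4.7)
The proof splits naturally into well-posedness of the generalized Stokes system \eqref{decoupledform2}-\eqref{decoupledform3} and the identification of the decoupled triple with a solution of \eqref{4thorderpdeweakform}. For the Stokes piece, continuity of the bilinear forms is immediate; the coercivity estimate \eqref{AEO} supplies coercivity on the kernel of the constraint, and \eqref{AH} is the inf-sup condition. The Babu\v{s}ka--Brezzi theory \cite{BoffiBrezziFortin2013} will then deliver existence, uniqueness and stability.

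The structural consequences I plan to extract by a sequence of tests. First, choosing $v=\dd\lambda\in H_{0}\Lambda^{j}$ in \eqref{decoupledform1}, using $\dd\dd\lambda=0$ together with $(f,\dd\lambda)=0$ (the weak sense of $\delta f=0$), I obtain $\dd\lambda=0$; by the exactness of the de Rham complex with homogeneous boundary conditions, any $\lambda\in H_{0}\Lambda^{j-1}\cap\ker(\dd)$ lies in $\dd H_{0}\Lambda^{j-2}$, which combined with the quotient-space orthogonality $\lambda\perp\dd H_{0}\Lambda^{j-2}$ forces $\lambda=0$; the analogous test with $\chi=\dd z$ in \eqref{decoupledform4} gives $z=0$. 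Next, \eqref{decoupledform3} reads $\dd\phi+\delta r=0$ in $L^{2}\Lambda^{j+2}$; since $\dd\phi\in\dd H_{0}^{1}\Lambda^{j+1}$ and $\delta r\in\delta H^{*}\Lambda^{j+3}$, the orthogonal splitting \eqref{eq:2formHelmholtzdecomp1} kills both summands, and the same exactness-and-quotient argument then produces $r=0$. Taking $\psi=0$ in \eqref{decoupledform2} yields $p\perp\delta H^{*}\Lambda^{j+3}$, so $p\in\dd H_{0}^{1}\Lambda^{j+1}$ again by \eqref{eq:2formHelmholtzdecomp1}.

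To identify $\phi$ with $\dd u$ I will use that $\phi\in H_{0}^{1}\Lambda^{j+1}\cap\ker(\dd)$ admits an $H_{0}^{1}$ potential via \eqref{eq:dH1}, so $\phi\in\dd H_{0}\Lambda^{j}$; equation \eqref{decoupledform4} with $z=0$ then shows $\dd u-\phi\in\dd H_{0}\Lambda^{j}$ while simultaneously being $L^{2}$-orthogonal to $\dd H_{0}\Lambda^{j}$, hence $\dd u=\phi$ and, in particular, $u\in H_{0}^{\rm Gd}\Lambda^{j}$. Finally, \eqref{4thorderpdeweakform2} coincides with \eqref{decoupledform40}, and for \eqref{4thorderpdeweakform1} I test \eqref{decoupledform2} with $\psi=\dd v$, $s=0$ for $v\in H_{0}^{\rm Gd}\Lambda^{j}$ and combine $\dd\dd v=0$ with \eqref{decoupledform1} and $\lambda=0$ to reach $(\nabla\dd u,\nabla\dd v)=(f,v)$. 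The delicate point will be the identification $\phi=\dd u$: it hinges on the regular potential identity \eqref{eq:dH1}, which upgrades a $\dd$-closed $H_{0}^{1}$ field to one admitting an $H_{0}^{1}$ potential, so that the $L^{2}$ orthogonality produced by \eqref{decoupledform4} becomes an actual equality rather than merely a Hodge-component identity; the remaining verifications reduce to Helmholtz splittings and direct substitution.
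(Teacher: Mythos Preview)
Your proof is correct and follows essentially the same route as the paper: Babu\v{s}ka--Brezzi via \eqref{AEO} and \eqref{AH} for well-posedness, then the same sequence of test-function choices ($v=\dd\lambda$, $\chi=\dd z$, $q=\delta r$ or equivalently the Helmholtz splitting of $\dd\phi+\delta r$, $\psi=0$, $\psi=\dd v$) to extract $\lambda=z=r=0$, $\dd\phi=0$, $p\in\dd H_0^1\Lambda^{j+1}$, and $\phi=\dd u$. One small point: the coercivity \eqref{AEO} and inf-sup \eqref{AH} are stated only for $0\leq j\leq d-2$; for $j=d-1$ the spaces $L^{2}\Lambda^{j+2}$ and $H^{*}\Lambda^{j+3}$ are trivial and the generalized Stokes system degenerates to a Poisson problem on $H_0^1\Lambda^d$, whose well-posedness is just Poincar\'e---the paper handles this case separately, and you should note it too.
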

\begin{proof}
When $j=d-1$, the generalized Stokes equation \eqref{decoupledform2}-\eqref{decoupledform3} reduces to the following Poisson equation: find $\phi\in H_{0}^{1}(\Omega)\cap L_0^2(\Omega)$ such that
\begin{equation*}
(\nabla\phi,\nabla\psi)=(\div w,\psi) \qquad \forall~\psi\in H_{0}^{1}(\Omega)\cap L_0^2(\Omega),
\end{equation*}
which is obviously well-posed.
For $0\leq j\leq d-2$, by applying the Babu\v{s}ka-Brezzi theory \cite{BoffiBrezziFortin2013},  the well-posedness of the generalized Stokes equation \eqref{decoupledform2}-\eqref{decoupledform3} follows from the coercivity \eqref{AEO} and the inf-sup condition \eqref{AH}. 

Next we prove the equivalence.
Take $v=\dd\lambda$ in (\ref{decoupledform1}) to get $\lambda=0$, $q=\delta r$ in (\ref{decoupledform3}) to get $r=0$ and $\dd\phi=0$, and $\chi=\dd z$ in (\ref{decoupledform4}) to get $z=0$.
So $\phi\in\dd H_0^{\rm Gd}\Lambda^j(\Omega)$.
By \eqref{decoupledform4}-\eqref{decoupledform40}, $\phi=\dd u$ and $u\in H_0^{\rm Gd}\Lambda^j(\Omega)$.
By taking $s=0$ and $\psi=\dd v$ with $v \in H_0^{\rm Gd}\Lambda^j$ in (\ref{decoupledform2}), we acquire
$$ (\nabla\dd u, \nabla\dd v)=(\dd w,\dd v)\quad\forall\,v \in H_0^{\rm Gd}\Lambda^j.$$
This together with (\ref{decoupledform1}) means that $(u,\lambda)\in H_0^{\rm Gd}\Lambda^j\times H_0\Lambda^{j-1}$ satisfies \eqref{4thorderpdeweakform}.

Choose $\psi=0$ in \eqref{decoupledform2} to derive $ p\perp \delta H^*\Lambda^{j+3}$. Thus, $ p\in \dd H_0^1\Lambda^{j+1}$ follows from the Helmholtz decomposition \eqref{eq:2formHelmholtzdecomp1}.
\end{proof}

Therefore, the weak formulation (\ref{4thorderpdeweakform}) of the fourth-order problem \eqref{4thorderpde} is decoupled into two second-order exterior differential equations \eqref{decoupledform1}-\eqref{decoupledform10}, \eqref{decoupledform4}-\eqref{decoupledform40} and one generalized Stokes equation (\ref{decoupledform2})-(\ref{decoupledform3}).

\begin{example}\rm
Taking $j=0$, we get the decoupling of the biharmonic equation~\eqref{biharmoniceqn}: find $u, w\in H_{0}^1(\Omega)$,
$\phi\in H_{0}^{1}\Lambda^{1}$,
$ p\in L^{2}\Lambda^{2}$ and $r\in H^*\Lambda^{3}/\delta H^*\Lambda^{4}$ such that
\begin{align*}
(\nabla w, \nabla v) &=(f,v) \qquad\quad \forall~v\in H_{0}^1(\Omega),
 \\
 (\nabla\phi,\nabla\psi)+(\curl\psi+\delta s, p)&=(\nabla w,\psi) \quad\;\;\; \forall~\psi\in H_{0}^{1}\Lambda^{1}, s\in H^*\Lambda^{3}/\delta H^*\Lambda^{4},\\
 (\curl \phi+\delta r, q) &=0 \qquad\qquad\;\;\, \forall~q\in L^2\Lambda^{2}, \\
  (\nabla u, \nabla \chi)&=(\phi,\nabla\chi) \qquad \forall~\chi\in H_{0}^1(\Omega).
\end{align*}
Such a decoupling of the biharmonic equation in two and three dimensions can be found in~\cite{HuangHuangXu2012,Huang2010,ChenHuang2018,Gallistl2017}, which is employed to design fast solvers for the Morley element method in \cite{HuangHuangXu2012,FengZhang2016,HuangShiWang2021}.
We refer to \cite{ChenHuang2018,Schedensack2016,Zhang2018,AinsworthParker2024a} for different decompositions.
\end{example}

\begin{example}\rm
Taking $j=1$ and $d=3$, we get the decoupling of the quad-curl problem \eqref{quadcurleqn}: find $u, w\in H_{0}(\curl,\Omega)$, $\lambda,z\in H_0^1(\Omega)$,
$\phi\in H_{0}^{1}(\Omega;\mathbb R^3)$ and
$ p\in L_0^{2}(\Omega)$ such that
\begin{align*}
(\curl w, \curl v)+(v, \nabla\lambda)&=(f,v) \qquad\qquad \forall~v\in H_{0}(\curl,\Omega), 
 \\
(w, \nabla\eta)&=0 \qquad\qquad\quad\;\;\, \forall~\eta\in H_{0}^1(\Omega), 
 \\
 (\nabla\phi,\nabla\psi)+(\div\psi, p)&=(\curl w,\psi) \quad\;\;\; \forall~\psi\in H_{0}^{1}(\Omega;\mathbb R^3), \\
 (\div \phi, q) &=0 \qquad\qquad\quad\;\;\, \forall~q\in L_0^2(\Omega),\\
  (\curl u,\curl \chi)+(\chi,\nabla z)&=(\phi, \curl\chi) \qquad \forall~\chi\in H_{0}(\curl,\Omega),  \\
(u,\nabla\mu)&=(g,\mu) \qquad\qquad \forall~\mu\in H_{0}^1(\Omega).
\end{align*}
This is the decoupling in \cite[Section 3.4]{ChenHuang2018}.
We refer to \cite{BrennerCavanaughSung2024,BrennerSunSung2017} for different decompositions.
\end{example}

\begin{example}\rm
Taking $j=d-1$, we get the decoupling of the fourth-order div problem \eqref{quaddiveqn}: 
find $u, w\in H_{0}(\div,\Omega)$, $\lambda,z\in H_0(\curl,\Omega)/\dd H_0\Lambda^{d-3}$
and $\phi\in H_{0}^{1}(\Omega)\cap L_0^2(\Omega)$ such that
\begin{align*}
(\div w,\div v)+(v,\curl\lambda)&=(f,v) \qquad\qquad \forall~v\in H_{0}(\div,\Omega), 
 \\
(w,\curl\eta)&=0 \qquad\qquad\quad\;\;\, \forall~\eta\in H_0(\curl,\Omega)/\dd H_0\Lambda^{d-3}, 
 \\
 (\nabla\phi,\nabla\psi)&=(\div w,\psi) \qquad \forall~\psi\in H_{0}^{1}(\Omega)\cap L_0^2(\Omega), \\
  (\div u,\div \chi)+(\chi,\curl z)&=(\phi, \div\chi) \qquad\, \forall~\chi\in H_{0}(\div,\Omega),  \\
(u,\curl\mu)&=(g,\mu) \qquad\quad\;\;\; \forall~\mu\in H_0(\curl,\Omega)/\dd H_0\Lambda^{d-3}. 
\end{align*}
This decoupled formulation is novel and differs from the one presented in \cite[Section 5]{FanLiuZhang2019}.
\end{example}


\begin{remark}\rm
By applying the integration by parts to $(\delta s, p)$ and $(\delta r, q)$, 
the formulation \eqref{decoupleform} is equivalent to
find $u, w\in H_{0}\Lambda^{j}$, $\lambda,z\in H_0\Lambda^{j-1}/\dd H_0\Lambda^{j-2}$,
$\phi\in H_{0}^{1}\Lambda^{j+1}$,
$ p\in H_0\Lambda^{j+2}$ and $r\in L^2\Lambda^{j+3}/\delta H^*\Lambda^{j+4}$ such that
\begin{subequations}\label{decoupleAequiv}
\begin{align}
(\dd w,\dd v)+(v,\dd\lambda)&=(f,v), \label{decoupleAequiv1}
 \\
(w,\dd\eta)&=0, \label{decoupleAequiv10} \\
\label{decoupleAequiv2}
 (\nabla\phi,\nabla\psi)+(\dd  \psi, p)
 +(s,\dd p)&=(\dd w,\psi), \\
\label{decoupleAequiv3}
 (\dd \phi, q)+(r, \dd q) &=0,\\
\label{decoupleAequiv4}
(\dd u,\dd \chi)+(\chi,\dd z)&=(\phi,\dd\chi), \\
\label{decoupleAequiv40}
(u,\dd\mu)&=(g,\mu), 
\end{align}
\end{subequations}
for any $v,\chi\in H_{0}\Lambda^{j}$, $\eta,\mu\in H_0\Lambda^{j-1}/\dd H_0\Lambda^{j-2}$,
$\psi\in H_{0}^{1}\Lambda^{j+1}$,
$ q\in H_0\Lambda^{j+2}$ and $s\in L^2\Lambda^{j+3}/\delta H^*\Lambda^{j+4}$.
The decoupled formulation \eqref{decoupleAequiv} is related to complex
\begin{equation*}
H_0\Lambda^{j-1}\xrightarrow{\dd} H_0^{\rm Gd}\Lambda^j\xrightarrow{\dd}H_{0}^{1}\Lambda^{j+1} \xrightarrow{\dd} H_{0}\Lambda^{j+2} \xrightarrow{\dd} L^{2}\Lambda^{j+3}/\delta H^*\Lambda^{j+4} \to 0,
\end{equation*}
rather than complex \eqref{H2derhamcomplex1}.
The numerical methods for the biharmonic equation in~\cite{Gallistl2017} are based on the decoupled formulation \eqref{decoupleAequiv} with $j=0$ and $d=2,3$.
\end{remark}

\subsection{Decoupled formulation without quotient spaces}
Due to the presence of the quotient spaces $H_0\Lambda^{j-1}/\dd H_0\Lambda^{j-2}$ and $L^2\Lambda^{j+3}/\delta H^*\Lambda^{j+4}$ in the decoupled formulation \eqref{decoupleform}, its discretization is less straightforward. To this end, by using the fact that $r=0$ and $\lambda=z=0$ in \eqref{decoupleform}, we propose the following equivalent decoupled formulation without quotient spaces:
find $u, w\in H_{0}\Lambda^{j}$, $\lambda,z\in H_0\Lambda^{j-1}$,
$\phi\in H_{0}^{1}\Lambda^{j+1}$,
$ p\in L^{2}\Lambda^{j+2}$ and $r\in H^*\Lambda^{j+3}$ such that
\begin{subequations}\label{decoupleformnew}
\begin{align}
(\dd w,\dd v)+(v,\dd\lambda)&=(f,v)\qquad\;\forall~v\in H_{0}\Lambda^{j}, \label{decoupledformnew1}
 \\
(w,\dd\eta)-(\lambda,\eta)&=0\qquad\qquad\, \forall~\eta\in H_0\Lambda^{j-1}, \label{decoupledformnew10}
 \\
 (\nabla\phi,\nabla\psi)+(r,s)+(\dd\psi+\delta s, p)&=(\dd w,\psi)\quad\;\forall~\psi\in H_{0}^{1}\Lambda^{j+1}, s\in H^*\Lambda^{j+3}, \label{decoupledformnew2}\\
 (\dd \phi+\delta r, q) &=0\qquad\qquad\,\forall~q\in L^2\Lambda^{j+2},\label{decoupledformnew3}\\
  (\dd u,\dd \chi)+(\chi,\dd z)&=(\phi,\dd\chi)\quad\;\,\forall~\chi\in H_{0}\Lambda^{j}, \label{decoupledformnew4} \\
(u,\dd\mu)-(z,\mu)&=(g,\mu)\qquad\;\forall~\mu\in H_0\Lambda^{j-1}. \label{decoupledformnew40}
\end{align}
\end{subequations}

\begin{lemma}\label{lem:stokesge}
Let $0\leq j\leq d-1$.
The formulation \eqref{decoupledformnew2}-\eqref{decoupledformnew3} of the generalized Stokes equation is well-posed, and $r=0$.
\end{lemma}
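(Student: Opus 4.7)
The plan is to follow the structure of the proof of Theorem~\ref{thm:unisoldecoupledform}, with two adjustments to accommodate the removal of the quotient space: the extra term $(r,s)$ restores coercivity on the enlarged test space, while the proof that $r=0$ now proceeds via two short testing steps rather than one.

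For well-posedness I would cast \eqref{decoupledformnew2}-\eqref{decoupledformnew3} as a Babu\v{s}ka-Brezzi saddle-point problem with primal space $V := H_0^1\Lambda^{j+1}\times H^*\Lambda^{j+3}$, multiplier space $M := L^2\Lambda^{j+2}$, and bilinear forms $a((\phi,r),(\psi,s)) := (\nabla\phi,\nabla\psi)+(r,s)$ and $b((\psi,s),q) := (\dd\psi+\delta s,q)$. Continuity of both forms is immediate. The inf-sup condition for $b$ is obtained exactly as in \eqref{AH}: the Helmholtz decomposition \eqref{eq:2formHelmholtzdecomp1} decomposes any $q\in L^2\Lambda^{j+2}$ as $q=\dd\psi+\delta s$ with a representative $s\in H^*\Lambda^{j+3}$ perpendicular to $\delta H^*\Lambda^{j+4}$, and the bound $\|\psi\|_1+\|s\|_{H^*\Lambda^{j+3}}\lesssim\|q\|$ transfers verbatim to the larger test space. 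For coercivity on $\ker b=\{(\phi,r)\in V:\dd\phi+\delta r=0\}$, the Poincar\'e inequality for $\phi\in H_0^1\Lambda^{j+1}$ together with the kernel identity $\|\delta r\|=\|\dd\phi\|\leq|\phi|_1$ yield $\|(\phi,r)\|_V^2\lesssim|\phi|_1^2+\|r\|^2=a((\phi,r),(\phi,r))$, the term $(r,s)$ being precisely what compensates for enlarging the ambient space from $H^*\Lambda^{j+3}/\delta H^*\Lambda^{j+4}$ to $H^*\Lambda^{j+3}$.

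To establish $r=0$ I would use two quick test-function choices. First, taking $q=\delta r$ in \eqref{decoupledformnew3} yields $(\dd\phi,\delta r)+\|\delta r\|^2=0$; since $\dd\phi\in\dd H_0^1\Lambda^{j+1}$ and $\delta r\in\delta H^*\Lambda^{j+3}$ are $L^2$-orthogonal by \eqref{eq:2formHelmholtzdecomp1}, the cross term vanishes and $\delta r=0$. Second, testing \eqref{decoupledformnew2} against $(\psi,s)=(0,r)$ gives $\|r\|^2+(\delta r,p)=0$, which with $\delta r=0$ collapses to $r=0$. I expect the principal subtlety to be conceptual rather than computational: identifying that the augmentation by $(r,s)$ is exactly what preserves both branches of Babu\v{s}ka-Brezzi once the quotient is removed, and recognizing that the Hodge orthogonality in \eqref{eq:2formHelmholtzdecomp1} plays a dual role --- it is used both to transfer inf-sup to the enlarged space and to clear the cross term $(\dd\phi,\delta r)$ in the argument for $r=0$.
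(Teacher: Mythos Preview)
Your proposal is correct and follows essentially the same Babu\v{s}ka--Brezzi route as the paper: inf-sup via the Helmholtz decomposition \eqref{eq:2formHelmholtzdecomp1}, coercivity on the kernel of $b$, and the two-step test $q=\delta r$ then $(\psi,s)=(0,r)$ to force $r=0$. Two minor differences are worth noting. First, the paper treats the degenerate case $j=d-1$ separately as a Poisson problem, whereas you absorb it into the general argument; this is harmless since the multiplier space $L^2\Lambda^{d+1}$ and $H^*\Lambda^{d+2}$ are trivial. Second, for coercivity on $\ker b$ the paper invokes the $L^2$-orthogonality in \eqref{eq:2formHelmholtzdecomp1} to conclude $\dd\psi=\delta s=0$ outright, while you instead use the kernel relation $\delta r=-\dd\phi$ to bound $\|\delta r\|\leq|\phi|_1$ directly; your route is slightly more elementary since it postpones the orthogonality argument to the $r=0$ step, but both arrive at the same estimate.
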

\begin{proof}
When $j=d-1$, the formulation \eqref{decoupledformnew2}-\eqref{decoupledformnew3} reduces to the weak formulation of Poisson equation, which is well-posed.

Now consider case $0\leq j\leq d-2$.
For $(\psi, s)\in  H_{0}^{1}\Lambda^{j+1}\times H^*\Lambda^{j+3}$ satisfying $(\dd\psi+\delta s, q)=0$ for all $ q\in L^{2}\Lambda^{j+2}$, by Helmholtz decomposition \eqref{eq:2formHelmholtzdecomp1}, we have $\dd\psi=\delta s=0$ and the coercivity
\begin{equation*}
\|\psi\|_{1}^{2}+\|s\|_{H^*\Lambda^{j+3}}^{2}=\|\psi\|_{1}^{2}+\|s\|^{2}\lesssim |\psi|_{1}^{2}+\|s\|^{2}.
\end{equation*}
By \eqref{AH}, it holds the inf-sup condition
\begin{equation*}
\| q\|\lesssim \sup _{\psi\in  H_{0}^{1}\Lambda^{j+1},\,
s\in H^*\Lambda^{j+3}}
\frac{(\dd\psi+\delta s, q)}{\|\psi\|_{1}+\|s\|_{H^*\Lambda^{j+3}}}\quad\forall~q\in L^{2}\Lambda^{j+2}.
\end{equation*}
By applying the Babu\v{s}ka-Brezzi theory \cite{BoffiBrezziFortin2013}, the well-posedness of formulation \eqref{decoupledformnew2}-\eqref{decoupledformnew3} follows from the last coercivity and inf-sup condition.

Equation \eqref{decoupledformnew3} implies that $\delta r = 0$. By setting $s = r$ and $\psi = 0$ in equation~\eqref{decoupledformnew2}, we immediately deduce that $r = 0$.
\end{proof}

\begin{theorem}\label{thm:unisoldecoupledformnew}
Let $0\leq j\leq d-1$.
The decoupled formulation \eqref{decoupleformnew} is well-posed, and equivalent to both the decoupled formulation \eqref{decoupleform} and the variational formulation \eqref{4thorderpdeweakform}.
In particular, if $(w,\lambda) \in H_{0}\Lambda^{j} \times H_0\Lambda^{j-1}$ solves problem \eqref{decoupledformnew1}-\eqref{decoupledformnew10}, $(\phi,p,r) \in H_{0}^{1}\Lambda^{j+1} \times L^{2}\Lambda^{j+2} \times H^*\Lambda^{j+3}$ solves problem \eqref{decoupledformnew2}-\eqref{decoupledformnew3}, and $(u,z) \in H_{0}\Lambda^{j} \times H_0\Lambda^{j-1}$ solves problem \eqref{decoupledformnew4}-\eqref{decoupledformnew40}, then these solutions also satisfy problems \eqref{decoupledform1}-\eqref{decoupledform10}, \eqref{decoupledform2}-\eqref{decoupledform3}, and \eqref{decoupledform4}-\eqref{decoupledform40}, respectively. Moreover, we have $r = 0$, $\lambda = z = 0$, $p\in \dd H_0^1\Lambda^{j+1}$ and $\phi=\dd u$.
\end{theorem}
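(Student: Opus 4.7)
The plan is to establish the well-posedness of the three subsystems in \eqref{decoupleformnew} separately and then argue that all three Lagrange multipliers $\lambda$, $z$, and $r$ vanish; the vanishing will simultaneously identify the solutions with those of \eqref{decoupleform} and let me invoke Theorem~\ref{thm:unisoldecoupledform} to conclude. The generalized Stokes piece \eqref{decoupledformnew2}-\eqref{decoupledformnew3}, including $r=0$, is already handled by Lemma~\ref{lem:stokesge}, so the remaining work concentrates on the two second-order mixed systems \eqref{decoupledformnew1}-\eqref{decoupledformnew10} and \eqref{decoupledformnew4}-\eqref{decoupledformnew40}, and on the identification with \eqref{decoupleform} and \eqref{4thorderpdeweakform}.

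For the well-posedness of the stabilized saddle point problem \eqref{decoupledformnew1}-\eqref{decoupledformnew10} on $H_0\Lambda^j\times H_0\Lambda^{j-1}$, I would mimic the Zulehner-based argument used in Lemma~\ref{lem:4thorderpdeweakformag}. Concretely, I would verify the two norm equivalences
\begin{align*}
\|w\|_{H\Lambda^j}&\eqsim \|\dd w\|+\sup_{\eta\in H_0\Lambda^{j-1}}\frac{(w,\dd\eta)}{\|\eta\|_{H\Lambda^{j-1}}},\\
\|\lambda\|_{H\Lambda^{j-1}}&\eqsim \|\lambda\|+\sup_{v\in H_0\Lambda^{j}}\frac{(v,\dd\lambda)}{\|v\|_{H\Lambda^{j}}},
\end{align*}
the first via the Poincar\'e inequality along the de Rham complex together with the Hodge decomposition \eqref{eq:hodgedecomp}, the second by a short chase from the definitions. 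Zulehner's framework in \cite{Zulehner2011} then delivers existence, uniqueness, and stability. The argument for \eqref{decoupledformnew4}-\eqref{decoupledformnew40} is identical.

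Next I would show $\lambda=z=0$. Taking $v=\dd\lambda\in H_0\Lambda^j$ in \eqref{decoupledformnew1} kills the first term since $\dd\dd\lambda=0$, leaving $\|\dd\lambda\|^2=(f,\dd\lambda)$; the right-hand side vanishes because $\delta f=0$ yields $(f,\dd\lambda)=0$ for all $\lambda\in H_0\Lambda^{j-1}$, so $\dd\lambda=0$. Feeding this into \eqref{decoupledformnew10} with $\eta=\lambda$ gives $\|\lambda\|^2=0$. For $z$, the same trick with $\chi=\dd z$ in \eqref{decoupledformnew4} forces $\dd z=0$, and then \eqref{decoupledformnew40} with $\mu=z$ produces $\|z\|^2=-(g,z)$; writing $g=\delta v$ with $v\in H_0^{\rm Gd}\Lambda^j\subset H_0\Lambda^j$ and using the integration by parts \eqref{greenidentity} against $z\in H_0\Lambda^{j-1}$ (whose trace vanishes) shows $(g,z)=(\dd z,v)=0$, hence $z=0$.

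To conclude the equivalence with \eqref{decoupleform}, I would note that $\dd\eta=0$ whenever $\eta\in \dd H_0\Lambda^{j-2}$, so after $\lambda=0$ the statement \eqref{decoupledformnew10} for all $\eta\in H_0\Lambda^{j-1}$ is the same as \eqref{decoupledform10} on the quotient, and analogously for \eqref{decoupledformnew40}; the generalized Stokes piece matches by Lemma~\ref{lem:stokesge}. Therefore the triple of solutions of \eqref{decoupleformnew} also solves \eqref{decoupleform}, and Theorem~\ref{thm:unisoldecoupledform} then supplies $p\in \dd H_0^1\Lambda^{j+1}$, $\phi=\dd u$, and the equivalence with \eqref{4thorderpdeweakform}. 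The main obstacle in this plan will be the clean verification of the Zulehner-type norm equivalences for the two stabilized subsystems; once those are in hand, the vanishing of the multipliers and the identification of the formulations are short algebraic consequences of $\delta f=0$ and $g\in\delta H_0^{\rm Gd}\Lambda^j$.
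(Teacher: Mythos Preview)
Your proposal is correct and follows essentially the same approach as the paper: the paper's proof simply says that the well-posedness of \eqref{decoupledformnew1}-\eqref{decoupledformnew10} and \eqref{decoupledformnew4}-\eqref{decoupledformnew40} and their equivalence to \eqref{decoupledform1}-\eqref{decoupledform10} and \eqref{decoupledform4}-\eqref{decoupledform40} are obtained by repeating the Zulehner argument of Lemma~\ref{lem:4thorderpdeweakformag}, invokes Lemma~\ref{lem:stokesge} for the Stokes block (giving $r=0$), and then the remaining conclusions follow from Theorem~\ref{thm:unisoldecoupledform}. Your write-up is a faithful expansion of exactly this sketch; the only cosmetic difference is that for $(g,z)=0$ you integrate by parts via $g=\delta v$ and $\dd z=0$, whereas the paper (in the proof of Lemma~\ref{lem:4thorderpdeweakformag}) writes $z=\dd\omega$ and uses $\delta g=0$, which is the same identity read in the other direction.
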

\begin{proof}
The well-posedness of problems \eqref{decoupledformnew1}-\eqref{decoupledformnew10} and \eqref{decoupledformnew4}-\eqref{decoupledformnew40}, along with their equivalence to problems \eqref{decoupledform1}-\eqref{decoupledform10} and \eqref{decoupledform4}-\eqref{decoupledform40}, respectively, can be shown following the proof of Lemma~\ref{lem:4thorderpdeweakformag}.  Lemma~\ref{lem:stokesge} establishes the well-posedness of problem \eqref{decoupledformnew2}-\eqref{decoupledformnew3} and demonstrates that $r=0$.  Consequently, problem \eqref{decoupledformnew2}-\eqref{decoupledformnew3} is equivalent to problem \eqref{decoupledform2}-\eqref{decoupledform3}.
\end{proof}

Unlike the decoupled formulation \eqref{decoupleform}, the new decoupled formulation \eqref{decoupleformnew} avoids the use of quotient spaces. This makes it more amenable to designing finite element methods and developing fast solvers compared to both the decoupled formulation \eqref{decoupleform} and the variational formulation \eqref{4thorderpdeweakform}.

\begin{example}\rm
Taking $j=0$, we get the decoupling of the biharmonic equation~\eqref{biharmoniceqn}: find $u, w\in H_{0}^1(\Omega)$,
$\phi\in H_{0}^{1}\Lambda^{1}$,
$ p\in L^{2}\Lambda^{2}$ and $r\in H^*\Lambda^{3}$ such that
\begin{align*}
(\nabla w, \nabla v) &=(f,v) \qquad\quad \forall~v\in H_{0}^1(\Omega),
 \\
 (\nabla\phi,\nabla\psi)+(r,s)+(\curl\psi+\delta s, p)&=(\nabla w,\psi) \quad\;\;\; \forall~\psi\in H_{0}^{1}\Lambda^{1}, s\in H^*\Lambda^{3},\\
 (\curl \phi+\delta r, q) &=0 \qquad\qquad\;\;\, \forall~q\in L^2\Lambda^{2}, \\
  (\nabla u, \nabla \chi)&=(\phi,\nabla\chi) \qquad \forall~\chi\in H_{0}^1(\Omega).
\end{align*}
\end{example}

\begin{example}\rm
Taking $j=1$, we get the decoupling of the quad-curl problem \eqref{quadcurleqn}: find $u, w\in H_{0}(\curl,\Omega)$, $\lambda,z\in H_0^1(\Omega)$,
$\phi\in H_{0}^{1}\Lambda^{2}$,
$p\in L^{2}\Lambda^{3}$ and $r\in H^*\Lambda^{4}$ such that
\begin{align*}
(\curl w, \curl v)+(v, \nabla\lambda)&=(f,v) \qquad\qquad \forall~v\in H_{0}(\curl,\Omega), 
 \\
(w, \nabla\eta)-(\lambda,\eta)&=0 \qquad\qquad\quad\;\;\, \forall~\eta\in H_{0}^1(\Omega), 
 \\
 (\nabla\phi,\nabla\psi)+(r,s)+(\dd \psi+\delta s, p)&=(\curl w,\psi) \quad\;\;\; \forall~\psi\in H_{0}^{1}\Lambda^{2}, s\in H^*\Lambda^{4}, \\
 (\dd \phi+\delta r, q) &=0 \qquad\qquad\quad\;\;\, \forall~q\in L^{2}\Lambda^{3},\\
  (\curl u,\curl \chi)+(\chi,\nabla z)&=(\phi, \curl\chi) \qquad \forall~\chi\in H_{0}(\curl,\Omega),  \\
(u,\nabla\mu)-(z,\mu)&=(g,\mu) \qquad\quad\;\;\;\, \forall~\mu\in H_{0}^1(\Omega).
\end{align*}
When $d=3$, the middle formulation is: find $\phi\in H_{0}^{1}(\Omega;\mathbb R^3)$,
$ p\in L^{2}(\Omega)$ and $r\in \mathbb R$ such that
\begin{align*}
 (\nabla\phi,\nabla\psi)+(\div\psi+s, p)&=(\curl w,\psi) \quad\;\;\; \forall~\psi\in H_{0}^{1}(\Omega;\mathbb R^3), s\in\mathbb R, \\
 (\div\phi+r, q) &=0 \qquad\qquad\quad\;\;\; \forall~q\in L^{2}(\Omega).
\end{align*}
\end{example}

\begin{example}\rm
Taking $j=d-1$, we get the decoupling of the fourth-order div problem \eqref{quaddiveqn}: 
find $u, w\in H_{0}(\div,\Omega)$, $\lambda,z\in H_0(\curl,\Omega)$
and $\phi\in H_{0}^{1}(\Omega)\cap L_0^2(\Omega)$ such that
\begin{align*}
(\div w,\div v)+(v,\curl\lambda)&=(f,v) \qquad\qquad \forall~v\in H_{0}(\div,\Omega), 
 \\
(w,\curl\eta)-(\lambda,\eta)&=0 \qquad\qquad\quad\;\;\, \forall~\eta\in H_0(\curl,\Omega), 
 \\
 (\nabla\phi,\nabla\psi)&=(\div w,\psi) \qquad \forall~\psi\in H_{0}^{1}(\Omega)\cap L_0^2(\Omega), \\
  (\div u,\div \chi)+(\chi,\curl z)&=(\phi, \div\chi) \qquad\, \forall~\chi\in H_{0}(\div,\Omega),  \\
(u,\curl\mu)-(z,\mu)&=(g,\mu) \qquad\quad\;\;\; \forall~\mu\in H_0(\curl,\Omega). 
\end{align*}
\end{example}

\section{Decoupled finite element methods}\label{sec:MINIfem}

We will develop a family of conforming finite element methods for the decoupled formulation \eqref{decoupleformnew} in this section, where the finite element pair for the generalized Stokes equation can be regarded as the generalization of the MINI element for Stokes equation in \cite{ArnoldBrezziFortin1984,BoffiBrezziFortin2013}.

\subsection{Finite element spaces and interpolations}
We first construct conforming finite element spaces for the generalized Stokes equation~\eqref{decoupledformnew2}-\eqref{decoupledformnew3}.
To discretize $H_{0}^{1}\Lambda^{j+1}(\Omega)$ with $0\leq j\leq d-1$, for simplex $T$ and integer $k\geq 1$, we take 
\begin{equation*}
\Phi_k(T) :=\mathbb{P}_{k}\Lambda^{j+1}(T) + b_{T}\,\delta\mathbb{P}_{k}\Lambda^{j+2}(T)
\end{equation*}
as the space of shape functions,
where $b_T=\lambda_0\lambda_1\cdots\lambda_d$ is the bubble function,
and $\lambda_i (i = 0,1, \ldots, d) $ are the barycentric coordinates corresponding to the vertices of $T$.
When $k=1$,  by $\delta\mathbb{P}_{1}\Lambda^{j+2}(T)=\mathbb{P}_{0}\Lambda^{j+1}(T)$, we have 
\begin{equation*}
\Phi_1(T)=\mathbb{P}_{1}\Lambda^{j+1}(T) + b_{T}\,\mathbb{P}_{0}\Lambda^{j+1}(T).
\end{equation*}
When $j=d-1$, $\Phi_k(T) =\mathbb{P}_{k}\Lambda^{d}(T) + b_{T}\mathbb{P}_{0}\Lambda^{d}(T)$.
For $j\leq d-2$, it holds
\begin{equation*}
\Phi_k(T) =\mathbb{P}_{k}\Lambda^{j+1}(T) \oplus b_{T}\,\delta(\mathbb{P}_{k}\Lambda^{j+2}(T)/\mathbb{P}_{k-d}\Lambda^{j+2}(T)).
\end{equation*}

The degrees of freedom (DoFs) are given by
\begin{subequations}\label{MINIDoFs}
\begin{align}
& v(\texttt{v}),  &&\,\texttt{v}\in \Delta_{0}(T), \label{MA}\\
&( v, q)_{f},  &&\,  q
\in \mathbb{P}_{k-\ell-1}\Lambda^{j+1}(f),\,f\in \Delta_{\ell}(T), \ell=1, \ldots, d-1, \label{MB}\\
&( v, q)_{T},  &&\,  q\in \mathbb{P}_{k-d-1}\Lambda^{j+1}(T)+\delta\mathbb{P}_{k}\Lambda^{j+2}(T).\label{MD}
\end{align}
\end{subequations}
Thanks to the decomposition \eqref{eq:polydecompdelta}, DoF \eqref{MD} can be rewritten as
\begin{equation*}
( v, q)_{T},\quad  q\in \star\kappa\mathbb{P}_{k-d-2}\Lambda^{d-j}(T)\oplus\delta \mathbb{P}_{k}\Lambda^{j+2}(T).
\end{equation*}
The DoFs \eqref{MA}-\eqref{MB} and the first part of DoF \eqref{MD} correspond to the DoFs of Lagrange element.

\begin{lemma}\label{lemOA}
Let $0\leq j\leq d-1$.
The DoFs \eqref{MINIDoFs} are unisolvent for space $\Phi_k(T)$.
\end{lemma}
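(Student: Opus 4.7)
The plan is to prove unisolvence in two steps: a dimension count plus an injectivity argument. First I would verify that the number of DoFs in \eqref{MINIDoFs} equals $\dim\Phi_k(T)$. The vertex DoFs \eqref{MA}, the face DoFs \eqref{MB}, and the $\mathbb P_{k-d-1}\Lambda^{j+1}(T)$-portion of \eqref{MD} are exactly the Lagrange DoFs for $\mathbb P_k\Lambda^{j+1}(T)$ (read componentwise in a fixed basis of $\mathrm{Alt}^{j+1}$), so they contribute $\dim\mathbb P_k\Lambda^{j+1}(T)$ conditions; the remaining $\delta\mathbb P_k\Lambda^{j+2}(T)$-portion contributes $\dim\delta\mathbb P_k\Lambda^{j+2}(T)$, which by \eqref{eq:polydecompdelta} matches the dimension of the bubble summand $b_T\delta(\mathbb P_k\Lambda^{j+2}(T)/\mathbb P_{k-d}\Lambda^{j+2}(T))$ appearing in the direct-sum description of $\Phi_k(T)$.

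Then for injectivity I would take $v\in\Phi_k(T)$ that annihilates every DoF in \eqref{MINIDoFs} and pick a decomposition $v = v_1 + b_T\,\delta q_1$ with $v_1\in\mathbb P_k\Lambda^{j+1}(T)$ and $q_1\in\mathbb P_k\Lambda^{j+2}(T)$. Because $b_T$ vanishes identically on $\partial T$, the bubble term $b_T\,\delta q_1$ contributes nothing to \eqref{MA} or to \eqref{MB} on any proper subsimplex, so $v_1$ itself annihilates every boundary Lagrange DoF. Scalar Lagrange unisolvence, applied componentwise on each subsimplex, then forces $v_1|_{\partial T}=0$, whence
\begin{equation*}
v_1 = b_T\,w_1 \quad \text{for some } w_1\in\mathbb P_{k-d-1}\Lambda^{j+1}(T),
\end{equation*}
with the convention that this space is $\{0\}$ when $k\le d$. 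Consequently $v = b_T(w_1+\delta q_1)$ and the combination $w_1+\delta q_1$ lies in $\mathbb P_{k-d-1}\Lambda^{j+1}(T)+\delta\mathbb P_k\Lambda^{j+2}(T)$.

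The decisive step is to test the interior DoF \eqref{MD} with $q = w_1+\delta q_1$, which is admissible. This produces
\begin{equation*}
0 = (v,\,w_1+\delta q_1)_T = \int_T b_T\,|w_1+\delta q_1|^2\dx,
\end{equation*}
and strict positivity of $b_T$ on the interior of $T$ gives $w_1+\delta q_1 = 0$, hence $v=0$.

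The main obstacle I anticipate is not the injectivity reduction, which is clean, but rather the dimension bookkeeping: one must identify $\dim\delta\mathbb P_k\Lambda^{j+2}(T)$ with $\dim\bigl(\mathbb P_k\Lambda^{j+2}(T)/\mathbb P_{k-d}\Lambda^{j+2}(T)\bigr)$ through \eqref{eq:polydecompdelta}, and check the degenerate regimes ($j=d-1$ with $\mathbb P_k\Lambda^{d+1}(T)=\mathbb R$, and the collapse of the interior polynomial space when $k\le d$). The injectivity step is robust to these cases, since its only essential ingredients are the vanishing of $b_T$ on $\partial T$, the divisibility of $v_1$ by $b_T$ after boundary vanishing, and the interior positivity of $b_T$.
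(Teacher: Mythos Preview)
Your proposal is correct and follows essentially the same approach as the paper: a dimension count followed by the observation that $v|_{\partial T}\in\mathbb P_k\Lambda^{j+1}(\partial T)$ (because the bubble term vanishes there), so the boundary Lagrange DoFs force $v=b_Tq$ with $q\in\mathbb P_{k-d-1}\Lambda^{j+1}(T)+\delta\mathbb P_k\Lambda^{j+2}(T)$, and then testing the interior DoF \eqref{MD} against $q$ itself gives $v=0$ by positivity of $b_T$. One minor caveat on your dimension bookkeeping: the ``remaining $\delta\mathbb P_k\Lambda^{j+2}$-portion'' of \eqref{MD} does not literally contribute $\dim\delta\mathbb P_k\Lambda^{j+2}(T)$ extra conditions when $k>d$, because $\mathbb P_{k-d-1}\Lambda^{j+1}(T)\cap\delta\mathbb P_k\Lambda^{j+2}(T)=\delta\mathbb P_{k-d}\Lambda^{j+2}(T)$ is nontrivial; the paper handles this by rewriting \eqref{MD} via \eqref{eq:polydecompdelta} as $\star\kappa\mathbb P_{k-d-2}\Lambda^{d-j}(T)\oplus\delta\mathbb P_k\Lambda^{j+2}(T)$, which makes the count transparent and matches the direct-sum description of $\Phi_k(T)$ you quoted.
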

\begin{proof}
Both the number of DoFs \eqref{MINIDoFs} and $\dim \Phi_k(T)$ are $1+\dim\mathbb{P}_{k}(T)$ for the case $j=d-1$ and $d=k$. In all other instances, the dimensions are described by
$$\dim \mathbb{P}_{k}\Lambda^{j+1}(T)+\dim \delta(\mathbb{P}_{k}\Lambda^{j+2}(T)/\mathbb{P}_{k-d}\Lambda^{j+2}(T)).$$

Take $ v \in \Phi_k(T)$ satisfying all the DoFs \eqref{MINIDoFs} vanish. 
Since $ v|_F \in \mathbb{P}_{k}\Lambda^{j+1}(F)$ for $F\in\Delta_{d-1}(T)$, 
the vanishing DoFs (\ref{MA})-(\ref{MB}) imply $ v|_{\partial T }= 0$. 
Then $ v = b_T q$ with $ q\in \mathbb{P}_{k-d-1}\Lambda^{j+1}(T)+\delta \mathbb{P}_{k}\Lambda^{j+2}(T)$, which together with the vanishing DoF (\ref{MD}) indicates $ v  = 0$.
\end{proof}

Let $\{\tau_{\ell}:\ell=1,\ldots,{d\choose j+1}\}$ be a basis of ${\rm Alt}^{j+1}\mathbb R^d$.
We present the basis functions of $\Phi_k(T)$ corresponding to DoFs \eqref{MINIDoFs}:
\begin{enumerate}[label=(\roman*)]
\item A set of basis functions related to DoFs \eqref{MA}-\eqref{MB} is
\begin{equation*}
\varphi_i^L\tau_{\ell}\quad\textrm{ for } i=1,\ldots,N_L^{\partial},\;\ell=1,\ldots, {d\choose j+1},
\end{equation*}
where $\{\varphi_i^L:i=1,\ldots,N_L^{\partial}\}$ is a basis of Lagrange element corresponding to the boundary DoFs \cite{Nicolaides1972,ChenChenHuangWei2024}. 
\item A set of basis functions related to DoF \eqref{MD} is
\begin{equation*}
b_T\varphi_{i}\quad\textrm{ for } i=1,\ldots,N_T,
\end{equation*}
where $\{\varphi_i:i=1,\ldots,N_T\}$ is a basis of $\star\kappa\mathbb{P}_{k-d-2}\Lambda^{d-j}(T)\oplus\delta \mathbb{P}_{k}\Lambda^{j+2}(T)$.
\end{enumerate}

Define the $H^1$-conforming finite element space
$$
\Phi_h:=\{ v_{h}\in H_{0}^{1}\Lambda^{j+1}(\Omega)
 : v_h|_{T}\in\mathbb{P}_{k}\Lambda^{j+1}(T) + b_{T}\,\delta\mathbb{P}_{k}\Lambda^{j+2}(T) \textrm{ for } T\in \mathcal{T}_{h}\}.$$
When $k=1$, $j=d-2$ and $d=2,3$, $\Phi_h$ is exactly the MINI element space in \cite{ArnoldBrezziFortin1984,BoffiBrezziFortin2013} for Stokes equation.
For $j=0,1,\ldots, d$, let 
\begin{align*}
V_{k,h}^{\dd}\Lambda^{j}&:=\mathbb P_{k}\Lambda^{j}(\mathcal T_h)\cap H\Lambda^{j}(\Omega),\quad\;\, V_{k,h}^{\delta}\Lambda^{j}:=\star V_{k,h}^{\dd}\Lambda^{d-j},\\
V_{k,h}^{\dd,-}\Lambda^{j}&:=\mathbb P_{k}^-\Lambda^{j}(\mathcal T_h)\cap H\Lambda^{j}(\Omega),\quad V_{k,h}^{\delta,-}\Lambda^{j}:=\star V_{k,h}^{\dd,-}\Lambda^{d-j}.
\end{align*}
Set $\mathring{V}_{k,h}^{\dd}\Lambda^{j}:=V_{k,h}^{\dd}\Lambda^{j}\cap H_{0}\Lambda^{j}(\Omega)$ and $\mathring{V}_{k,h}^{\dd,-}\Lambda^{j}:=V_{k,h}^{\dd,-}\Lambda^{j}\cap H_{0}\Lambda^{j}(\Omega)$.
We have the short exact sequences \cite{Hiptmair2001,Hiptmair2002,Arnold2018,ArnoldFalkWinther2006}
\begin{equation}\label{femderhamcomplex1form1}
\mathring{V}_{k+1,h}^{\dd,-}\Lambda^{j-1}\xrightarrow{\dd} 
\mathring{V}_{k,h}^{\dd}\Lambda^{j}\xrightarrow{\dd} \dd \mathring{V}_{k,h}^{\dd}\Lambda^{j} \to0,
\end{equation}
\begin{equation}\label{femderhamcomplex1form2}
\mathring{V}_{k,h}^{\dd,-}\Lambda^{j-1}\xrightarrow{\dd} 
\mathring{V}_{k,h}^{\dd,-}\Lambda^{j}\xrightarrow{\dd} \dd \mathring{V}_{k,h}^{\dd,-}\Lambda^{j} \to0,
\end{equation}
\begin{equation}\label{femderhamcomplex1form}
V_{k,h}^{\delta,-}\Lambda^{j+1} \xrightarrow{\delta} 
V_{k,h}^{\delta,-}\Lambda^{j}\xrightarrow{\delta} \delta V_{k,h}^{\delta,-}\Lambda^{j} \to0.
\end{equation}

Let $I_{k,h}^{\dd}: L^2\Lambda^j(\Omega)\to V_{k,h}^{\dd}\Lambda^{j}$ and $I_{k,h}^{\dd,-}: L^2\Lambda^j(\Omega)\to V_{k,h}^{\dd,-}\Lambda^{j}$ be the $L^2$ bounded projection operators devised in~\cite{ArnoldGuzman2021,ChristiansenWinther2008}, then $I_{k,h}^{\dd}\omega, I_{k,h}^{\dd,-}\omega\in H_0\Lambda^j(\Omega)$ for $\omega\in H_0\Lambda^j(\Omega)$.
Define $I_{k,h}^{\delta}: L^2\Lambda^j(\Omega)\to V_{k,h}^{\delta}\Lambda^{j}$ and $I_{k,h}^{\delta,-}: L^2\Lambda^j(\Omega)\to V_{k,h}^{\delta,-}\Lambda^{j}$ as 
\begin{equation*}
\star I_{k,h}^{\delta}\omega=I_{k,h}^{\dd}(\star\omega),\;\; \star I_{k,h}^{\delta,-}\omega=I_{k,h}^{\dd,-}(\star\omega), \quad\omega\in L^2\Lambda^j(\Omega).
\end{equation*}
Then $I_{k,h}^{\delta}$ and $I_{k,h}^{\delta,-}$ are also $L^2$ bounded. 
We will abbreviate $I_{k,h}^{\dd}$ and $I_{k,h}^{\dd,-}$ as $I_{h}^{\dd}$, and $I_{k,h}^{\delta}$ and $I_{k,h}^{\delta,-}$ as $I_{h}^{\delta}$, if not causing any confusions.
We have (cf. \cite{ArnoldGuzman2021,ChristiansenWinther2008})
\begin{equation}\label{eq:IhdCDprop}
\dd(I_h^{\dd}\omega)=I_h^{\dd}(\dd \omega),\quad \omega\in H\Lambda^j(\Omega),\, 0\leq j\leq d,
\end{equation}
\begin{equation}\label{eq:IhdeltaCDprop}
\delta(I_h^{\delta}\omega)=I_h^{\delta}(\delta \omega),\quad \omega\in H^*\Lambda^j(\Omega),\, 0\leq j\leq d,
\end{equation}
\begin{equation}\label{eq:Ihddelta-estimate}
\|\omega-I_{k,h}^{\dd,-}\omega\| + \|\omega-I_{k,h}^{\delta,-}\omega\| \lesssim h^k|\omega|_k,\quad \omega\in H^k\Lambda^j(\Omega),
\end{equation}
\begin{equation}\label{eq:Ihddeltaestimate}
\|\omega-I_{k,h}^{\dd}\omega\| + h\|\dd(\omega-I_{k,h}^{\dd}\omega)\| \lesssim h^{k+1}|\omega|_{k+1},\quad \omega\in H^{k+1}\Lambda^j(\Omega).
\end{equation}

We next introduce a Fortin operator $I_h: H_{0}^{1}\Lambda^{j+1}(\Omega)\rightarrow \Phi_h$ based on DoFs \eqref{MINIDoFs} as follows:
\begin{align*}
I_h v(\texttt{v})
&=\frac{1}{|\mathcal T_{\texttt{v}}|}\sum_{T\in\mathcal T_{\texttt{v}}}(Q_{k,T} v)(\texttt{v}), && \texttt{v}\in\Delta_{0}(\mathring{\mathcal T}_h),
\\
(I_h v, q)_{f}
&=\frac{1}{|\mathcal T_{f}|}\sum_{T\in\mathcal T_{f}}(Q_{k,T} v, q)_{f},
&&  q\in \mathbb{P}_{k-\ell-1}\Lambda^{j+1}(f),\,f\in \Delta_{\ell}(\mathring{\mathcal T}_h), 1\leq\ell\leq d-2,\\
(I_h v, q)_{F}
&=( v, q)_{F},
&&  q\in \mathbb{P}_{k-d}\Lambda^{j+1}(F), F\in\Delta_{d-1}(\mathring{\mathcal T}_h),\\
(I_h v, q)_{T}&=( v, q)_{T}, &&  q\in \mathbb{P}_{k-d-1}\Lambda^{j+1}(T)+\delta\mathbb{P}_{k}\Lambda^{j+2}(T), T\in\mathcal T_h.
\end{align*}

\begin{remark}\rm
Let $j=d-1$.
Since DoF \eqref{MD} includes the moment $\int_Tv\dx$,
we have $I_hv\in L_{0}^{2}\Lambda^{d}(\Omega)$ for $v\in H_{0}^{1}\Lambda^{d}(\Omega)\cap L_{0}^{2}\Lambda^{d}(\Omega)$.
\end{remark}

Applying the integration by parts, we have from the definition of $I_h$ that 
\begin{equation}\label{eq:MINIcurlcd}
(\dd( v-I_h v),  q)=(v-I_h v, \delta q)=0\quad \forall~ v\in H_{0}^{1}\Lambda^{j+1}(\Omega),\,  q\in V_{k,h}^{\delta,-}\Lambda^{j+2}.
\end{equation}

\begin{lemma}
Let $0\leq j\leq d-1$.
Let $ v\in H_{0}^{1}\Lambda^{j+1}(\Omega)\cap H^{s}\Lambda^{j+1}(\Omega)$ with $1\leq s\leq k+1$. We have for $0\leq m\leq s$ and $T\in\mathcal T_h$ that 
\begin{align}\label{BEqs}
| v-I_h v|_{m,T}\lesssim h_{T}^{s-m}| v|_{s,\omega_{T}}.
\end{align}
\end{lemma}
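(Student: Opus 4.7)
The plan is to follow a standard Scott--Zhang-style argument adapted to the bubble-enriched space $\Phi_k(T)$, resting on three ingredients: local polynomial preservation, local $L^2$-stability on the patch $\omega_T$, and a Bramble--Hilbert step.

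First I would verify that $I_h$ reproduces polynomials in $\mathbb{P}_k\Lambda^{j+1}$ locally: if $v\in H_0^1\Lambda^{j+1}(\Omega)$ satisfies $v|_{\omega_T}=q$ for some $q\in\mathbb{P}_k\Lambda^{j+1}$, then $(I_h v)|_T=q|_T$. Indeed, since $Q_{k,T'}q=q$ for each $T'\subset\omega_T$, the averaged vertex values in \eqref{MA} and face moments in \eqref{MB} all collapse to the exact nodal values and moments of $q$, while the volume DoF \eqref{MD} is preserved by construction. Hence on $T$ the DoFs of $I_h v$ coincide with those of $q$, and unisolvency (Lemma~\ref{lemOA}) gives $(I_h v)|_T=q|_T$.

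Next I would establish local $L^2$-stability
$$
\|I_h v\|_T\lesssim \|v\|_{\omega_T}\qquad\forall~v\in H_0^1\Lambda^{j+1}(\Omega).
$$
Writing $I_h v=\sum_i \alpha_i \varphi_i$ in the local basis described after Lemma~\ref{lemOA}, each coefficient $\alpha_i$ is a DoF value. For the vertex and face contributions, the boundedness of $Q_{k,T'}$ in $L^2$, together with scaled trace inequalities on subsimplices $f\subset\partial T$, yields $|\alpha_i|\lesssim h_T^{-d/2+\mathrm{scaling}}\,\|v\|_{\omega_T}$. The interior bubble DoF in \eqref{MD} is bounded directly by Cauchy--Schwarz on $T$. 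Standard scaling arguments on the reference element then convert the coefficient bounds into the claimed $L^2$ bound on the whole basis expansion; shape-regularity controls the number of neighboring simplices in the averaging and the cardinalities $|\mathcal T_f|$.

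Finally I would invoke the Deny--Lions/Bramble--Hilbert lemma on the patch: since $s\leq k+1$, there exists $p\in\mathbb{P}_{s-1}\Lambda^{j+1}(\omega_T)\subset\mathbb{P}_k\Lambda^{j+1}(\omega_T)$ such that
$$
\|v-p\|_{m,\omega_T}\lesssim h_T^{s-m}|v|_{s,\omega_T},\qquad 0\leq m\leq s.
$$
Polynomial preservation gives $I_h p=p$ on $T$, so
$$
|v-I_h v|_{m,T}\leq |v-p|_{m,T}+|I_h(v-p)|_{m,T}.
$$
The first term is bounded by $h_T^{s-m}|v|_{s,\omega_T}$. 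For the second, the inverse inequality on $\Phi_k(T)$ combined with the $L^2$-stability of $I_h$ yields
$$
|I_h(v-p)|_{m,T}\lesssim h_T^{-m}\|I_h(v-p)\|_T\lesssim h_T^{-m}\|v-p\|_{\omega_T}\lesssim h_T^{s-m}|v|_{s,\omega_T},
$$
and adding the two bounds produces \eqref{BEqs}.

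The main obstacle is the $L^2$-stability step: the bubble-enriched shape functions mean one cannot quote a Lagrange-element stability result directly. One must carefully choose a biorthogonal basis of $\mathbb{P}_{k-d-1}\Lambda^{j+1}(T)+\delta\mathbb{P}_k\Lambda^{j+2}(T)$ so that the bubble coefficients inherit the correct $h_T$-scaling, and must verify that the Scott--Zhang-type averaging on interior subsimplices remains uniformly bounded under shape-regularity. Once the stability is secured, the rest is the standard Bramble--Hilbert interpolation machinery.
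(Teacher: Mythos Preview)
Your Scott--Zhang-style outline is sound in spirit, but the claimed pure $L^2$-stability $\|I_h v\|_T\lesssim\|v\|_{\omega_T}$ fails as stated. The $(d-1)$-face DoFs of $I_h$ read $(I_h v,q)_F=(v,q)_F$, using the trace of $v$ on $F$ directly rather than through a local $L^2$-projection; a sequence with bounded $\|v\|_{\omega_T}$ but large $\|v\|_F$ breaks the bound. What you can prove is the weighted estimate $\|I_h v\|_T\lesssim\|v\|_{\omega_T}+h_T|v|_{1,\omega_T}$, and this still feeds your Bramble--Hilbert step since $\|v-p\|_{\omega_T}+h_T|v-p|_{1,\omega_T}\lesssim h_T^{\,s}|v|_{s,\omega_T}$. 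A second wrinkle you skip: for elements touching $\partial\Omega$ the Bramble--Hilbert polynomial $p$ is not in $H_0^1$, so $I_h p\neq p$ there; the resulting boundary-DoF discrepancy must be bounded separately via $p|_{\partial\Omega}=-(v-p)|_{\partial\Omega}$ and a trace inequality.

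The paper organizes the argument differently and sidesteps both issues at once. It inserts $Q_{k,T}v$ instead of a patchwise polynomial, bounds $h_T^m|Q_{k,T}v-I_h v|_{m,T}\lesssim\|Q_{k,T}v-I_h v\|_T$ by the inverse inequality, and then invokes the DoF-norm equivalence on $\Phi_k(T)$. The DoF differences are either averages of $(Q_{k,T}v-Q_{k,K}v)(\cdot)$ over $K\in\mathcal T_{\texttt{v}}$ (for the Cl\'ement-type vertex and low-dimensional face DoFs) or $(Q_{k,T}v-v)|_{F}$ (for the $(d-1)$-face DoFs), so everything collapses to $\sum_K(\|Q_{k,K}v-v\|_K+h_T|Q_{k,K}v-v|_{1,K})$ via trace inequalities, and the approximation of $Q_{k,K}$ finishes. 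No separate stability lemma is isolated, and boundary elements are handled uniformly because $v\in H_0^1$ gives $(Q_{k,T}v)|_F=(Q_{k,T}v-v)|_F$ on boundary faces.
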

\begin{proof}
Following the argument in \cite{Wang2001,Brenner2003,HuangHuang2011}, we use the inverse inequality, the norm equivalence on the shape function space and the trace inequality to get
\begin{align*}
h_T^m|Q_{k,T} v-I_h v|_{m,T}&\lesssim \|Q_{k,T} v-I_h v\|_{T} \\
&\lesssim h_{T}^{1/2}\|Q_{k,T} v- v\|_{\partial T}+ \sum_{\texttt{v}\in\Delta_0(T)}h_{T}^{d/2}|(Q_{k,T} v-I_h v)(\texttt{v})| \\
&\quad+ \sum_{\ell=1}^{d-2}\sum_{f\in\Delta_{\ell}(T)}h_{T}^{(d-\ell)/2}\|Q_{k-\ell-1,f}(Q_{k,T} v-I_h v)\|_{F} \\
&\lesssim \sum_{\texttt{v}\in\Delta_0(T)}\sum_{K\in\mathcal T_{\texttt{v}}}(\|Q_{k,K} v- v\|_{K}+h_{T}|Q_{k,K} v- v|_{1,K}),
\end{align*}
which together with the triangle inequality and the error estimate of $Q_{k,K}$ to derive~\eqref{BEqs}.
\end{proof}

\subsection{Decoupled discrete methods}
Now we propose a family of finite element methods for the decoupled formulation \eqref{decoupleformnew} with $0\leq j\leq d-1$: find $w_{h}\in \mathring{V}_{k,h}^{\dd}\Lambda^{j}$,
$\phi_{h}\in\Phi_h$,
$p_h\in V_{k,h}^{\delta,-}\Lambda^{j+2}$, $r_h\in V_{k,h}^{\delta,-}\Lambda^{j+3}$,
$u_h\in \mathring{V}_{k+1,h}^{\dd,-}\Lambda^{j}$ and $\lambda_h,z_h\in \mathring{V}_{k+1,h}^{\dd,-}\Lambda^{j-1}$ such that
\begin{subequations}\label{decoupleMINI}
\begin{align}
(\dd w_h,\dd v)+(v,\dd\lambda_h)&=(f,v)
  &&\forall \,\, v\in \mathring{V}_{k,h}^{\dd}\Lambda^{j},\label{decoupleMINI1}\\
(w_h,\dd\eta)-(\lambda_h,\eta)&=0 &&\forall \,\, \eta\in \mathring{V}_{k+1,h}^{\dd,-}\Lambda^{j-1}, \label{decoupleMINI10}
 \\
(\nabla\phi_{h},\nabla\psi)+(r_h,s)+(\dd \psi
 +\delta s, p_{h})&=(\dd w_{h},\psi)
  &&\forall \,\, \psi\in \Phi_h, s\in V_{k,h}^{\delta,-}\Lambda^{j+3},\label{decoupleMINI2} \\
 (\dd \phi_{h}+\delta r_{h}, q) &=0  &&\forall \,\, q\in V_{k,h}^{\delta,-}\Lambda^{j+2},\label{decoupleMINI3}\\
  (\dd u_h,\dd \chi)+(\chi,\dd z_h)&=(\phi_{h},\dd\chi) &&\forall \,\, \chi\in \mathring{V}_{k+1,h}^{\dd,-}\Lambda^{j},\label{decoupleMINI4} \\
(u_h,\dd\mu)-(z_h,\mu)&=(g,\mu) &&\forall \,\,  \mu\in \mathring{V}_{k+1,h}^{\dd,-}\Lambda^{j-1}. \label{decoupleMINI40}
\end{align}
Here $V_{k,h}^{\delta,-}\Lambda^{j+2}=\{0\}$ and $V_{k,h}^{\delta,-}\Lambda^{j+3}=\{0\}$ for $j=d-1$.
\end{subequations}

For simplicity, introduce the bilinear forms 
$$
a(\phi,r;\psi,s)=(\nabla\phi,\nabla\psi)+(r,s),\quad
b(\psi, s; q)=(\dd\psi+\delta s, q).
$$
We present the discrete coercivity in the following lemma.
\begin{lemma}
Let $0\leq j\leq d-2$.
For $(\psi, s)\in \Phi_h \times V_{k,h}^{\delta,-}\Lambda^{j+3}$ satisfying $b(\psi, s; q)=0$ for all $q\in V_{k,h}^{\delta,-}\Lambda^{j+2}$, we have the discrete coercivity
\begin{align}\label{BE2}
\|\psi\|_{1}^{2}+\|s\|_{H^*\Lambda^{j+3}}^{2}\lesssim a(\psi,s;\psi,s).
\end{align}
\end{lemma}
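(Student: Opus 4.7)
The plan is to mimic the continuous coercivity argument of \eqref{AEO}, exploiting the fact that the codifferential $\delta$ maps the discrete space $V_{k,h}^{\delta,-}\Lambda^{j+3}$ into the discrete pressure space $V_{k,h}^{\delta,-}\Lambda^{j+2}$. First I would note, using the discrete complex \eqref{femderhamcomplex1form} (applied with index shifted by two), that $\delta s \in V_{k,h}^{\delta,-}\Lambda^{j+2}$, so it is an admissible test function $q$ in the constraint $b(\psi,s;q)=0$. Choosing $q=\delta s$ then yields
\[
(\dd\psi+\delta s,\delta s)=0,
\]
from which Cauchy--Schwarz gives $\|\delta s\|^{2}=-(\dd\psi,\delta s)\leq \|\dd\psi\|\|\delta s\|$, and hence $\|\delta s\|\lesssim |\psi|_{1}$.

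Next, since $\psi\in\Phi_h\subset H_{0}^{1}\Lambda^{j+1}(\Omega)$, the standard Poincar\'e inequality on $H_{0}^{1}$ gives $\|\psi\|\lesssim |\psi|_{1}$, so that $\|\psi\|_{1}^{2}\lesssim |\psi|_{1}^{2}$. Combining this with the above bound on $\|\delta s\|$ and recalling $\|s\|_{H^{*}\Lambda^{j+3}}^{2}=\|s\|^{2}+\|\delta s\|^{2}$, we obtain
\[
\|\psi\|_{1}^{2}+\|s\|_{H^{*}\Lambda^{j+3}}^{2}\lesssim |\psi|_{1}^{2}+\|s\|^{2}=a(\psi,s;\psi,s),
\]
which is the desired discrete coercivity.

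The subtle point is really the contrast with the continuous proof of \eqref{AEO}: there, the $L^{2}$-orthogonality of $\dd H_{0}^{1}\Lambda^{j+1}$ and $\delta H^{*}\Lambda^{j+3}$ from the Helmholtz decomposition \eqref{eq:2formHelmholtzdecomp1} makes $(\dd\psi,\delta s)=0$, forcing $\delta s=0$ and hence $s=0$ after modding out $\delta H^{*}\Lambda^{j+4}$. Such Helmholtz orthogonality is unavailable at the discrete level and we do not quotient $s$, so we cannot conclude $s=0$; instead, the weaker Cauchy--Schwarz bound on $\|\delta s\|$ is exactly what is needed, since the $|\psi|_{1}$ produced thereby is already controlled by $a(\psi,s;\psi,s)$. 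The only structural input beyond Poincar\'e is therefore the closure property $\delta V_{k,h}^{\delta,-}\Lambda^{j+3}\subseteq V_{k,h}^{\delta,-}\Lambda^{j+2}$ furnished by the discrete complex.
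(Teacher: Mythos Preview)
Your proof is correct and follows the same opening move as the paper: test the constraint with $q=\delta s$, which is admissible by the discrete complex \eqref{femderhamcomplex1form}. The one point worth correcting is your remark that ``Helmholtz orthogonality is unavailable at the discrete level.'' In fact it \emph{is} available here, because the spaces are conforming: $\psi\in\Phi_h\subset H_0^1\Lambda^{j+1}$ and $s\in V_{k,h}^{\delta,-}\Lambda^{j+3}\subset H^*\Lambda^{j+3}$, so $\dd\psi\in\dd H_0^1\Lambda^{j+1}$ and $\delta s\in\delta H^*\Lambda^{j+3}$, and the continuous orthogonality in \eqref{eq:2formHelmholtzdecomp1} gives $(\dd\psi,\delta s)=0$. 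The paper uses exactly this to conclude $\delta s=0$ outright from $(\dd\psi+\delta s,\delta s)=0$, and then \eqref{BE2} reduces to $\|\psi\|_1^2+\|s\|^2\lesssim|\psi|_1^2+\|s\|^2$, i.e.\ just Poincar\'e. Your Cauchy--Schwarz detour is therefore unnecessary (though harmless): you reach the same inequality, only via a weaker intermediate bound $\|\delta s\|\le|\psi|_1$ in place of the sharper $\delta s=0$.
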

\begin{proof}
By complex \eqref{femderhamcomplex1form}, take $ q=\delta s$ in $b(\psi, s; q)=0$ to acquire $\delta s=0$.
Then \eqref{BE2} holds from the Poincar\'e inequality.
\end{proof}

To prove the discrete inf-sup condition, we first establish an $L^2$-stable decomposition of space $V_{k,h}^{\delta,-}\Lambda^{j+2}$ with the help of $I_h^{\delta}$.
\begin{lemma}\label{lem:L2stabledecompNedelec}
Let $0\leq j\leq d-2$.
For $ p_h\in V_{k,h}^{\delta,-}\Lambda^{j+2}$, these exist $ q_h\in V_{k,h}^{\delta,-}\Lambda^{j+2}$ and $r_h\in V_{k,h}^{\delta,-}\Lambda^{j+3}$ such that 
\begin{equation*}
 p_h= q_h+\delta r_h,
\end{equation*}
\begin{equation*}
\| p_h\|\eqsim \| q_h\|+\|\delta r_h\|, \quad \| q_h\|\lesssim \|\delta p_h\|_{-1}.
\end{equation*}
\end{lemma}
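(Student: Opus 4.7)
The proof will lift the continuous Hodge--Helmholtz decomposition \eqref{eq:2formHelmholtzdecomp1} to the discrete setting using the $L^{2}$-bounded commuting projection $I_{h}^{\delta}$. Since $p_{h}\in L^{2}\Lambda^{j+2}$, decomposition \eqref{eq:2formHelmholtzdecomp1} yields $\phi\in H_{0}^{1}\Lambda^{j+1}$ and $r\in H^{*}\Lambda^{j+3}$ with $p_{h}=\dd\phi+\delta r$ and $\|\dd\phi\|\leq\|p_{h}\|$, $\|\delta r\|\leq\|p_{h}\|$. The closedness of $\dd H_{0}^{1}\Lambda^{j+1}$ in $L^{2}\Lambda^{j+2}$ (a consequence of \eqref{eq:dH1} and the Hodge decomposition \eqref{eq:hodgedecomp}) combined with the open mapping theorem applied to $\dd:H_{0}^{1}\Lambda^{j+1}/\ker(\dd)\to \dd H_{0}^{1}\Lambda^{j+1}$ allows us to choose the representative $\phi$ so that
\begin{equation*}
\|\phi\|_{1}\lesssim\|\dd\phi\|.
\end{equation*}

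Next, set $r_{h}:=I_{h}^{\delta}r\in V_{k,h}^{\delta,-}\Lambda^{j+3}$ and $q_{h}:=I_{h}^{\delta}(\dd\phi)\in V_{k,h}^{\delta,-}\Lambda^{j+2}$. Since $p_{h}\in V_{k,h}^{\delta,-}\Lambda^{j+2}$ is fixed by $I_{h}^{\delta}$, applying the projection to $p_{h}=\dd\phi+\delta r$ and using the commutativity \eqref{eq:IhdeltaCDprop} gives
\begin{equation*}
p_{h}=I_{h}^{\delta}p_{h}=I_{h}^{\delta}(\dd\phi)+\delta I_{h}^{\delta}r=q_{h}+\delta r_{h}.
\end{equation*}
The $L^{2}$-boundedness of $I_{h}^{\delta}$ then produces $\|q_{h}\|\lesssim\|\dd\phi\|\leq\|p_{h}\|$ and $\|\delta r_{h}\|=\|I_{h}^{\delta}(\delta r)\|\lesssim\|\delta r\|\leq\|p_{h}\|$. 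Combined with the triangle inequality applied to $p_{h}=q_{h}+\delta r_{h}$, this yields the equivalence $\|p_{h}\|\eqsim\|q_{h}\|+\|\delta r_{h}\|$.

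For the key bound $\|q_{h}\|\lesssim\|\delta p_{h}\|_{-1}$, I would use the $L^{2}$-orthogonality between $\dd H_{0}\Lambda^{j+1}$ and $\delta H^{*}\Lambda^{j+3}$: by \eqref{greenidentity}, $\delta^{2}=0$, and the vanishing trace of $w$, one has $(\dd w,\delta r)=0$ for every $w\in H_{0}\Lambda^{j+1}$, so
\begin{equation*}
(\dd\phi,\dd w)=(p_{h},\dd w)\qquad\forall\, w\in H_{0}\Lambda^{j+1}.
\end{equation*}
Testing with $w=\phi\in H_{0}^{1}\Lambda^{j+1}\subset H_{0}\Lambda^{j+1}$ and using the definition $\|\delta p_{h}\|_{-1}=\sup_{w\in H_{0}^{1}\Lambda^{j+1}}(p_{h},\dd w)/\|w\|_{1}$ gives
\begin{equation*}
\|\dd\phi\|^{2}=(p_{h},\dd\phi)\leq\|\delta p_{h}\|_{-1}\|\phi\|_{1}\lesssim\|\delta p_{h}\|_{-1}\|\dd\phi\|,
\end{equation*}
so $\|q_{h}\|\lesssim\|\dd\phi\|\lesssim\|\delta p_{h}\|_{-1}$, as required.

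The main obstacle is the regular potential step: choosing $\phi$ so that $\|\phi\|_{1}$ is controlled by $\|\dd\phi\|$. Without such a choice, the final duality argument only delivers $\|\dd\phi\|^{2}\leq\|\delta p_{h}\|_{-1}\|\phi\|_{1}$, which is insufficient. The identity \eqref{eq:dH1} together with the closedness of the range of $\dd$ acting on $H_{0}^{1}$-forms is exactly what is needed here, and it is the reason the continuous Helmholtz decomposition is invoked at the $H^{1}$-level rather than merely at the $H\Lambda$-level.
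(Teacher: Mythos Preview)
Your proof is correct. It differs from the paper's argument mainly in how the continuous potential is obtained and how the bound $\|q_h\|\lesssim\|\delta p_h\|_{-1}$ is justified. The paper invokes Costabel--McIntosh (Theorem~1.1 in \cite{CostabelMcIntosh2010}) directly to produce $q\in H^{*}\Lambda^{j+2}$ with $\delta q=\delta p_h$ and $\|q\|\lesssim\|\delta p_h\|_{-1}$, then sets $q_h=I_h^{\delta}q$; the potential $r_h$ is obtained afterwards from the discrete exact complex \eqref{femderhamcomplex1form}, since $\delta(p_h-q_h)=0$. You instead start from the orthogonal Helmholtz decomposition \eqref{eq:2formHelmholtzdecomp1}, project both summands via $I_h^{\delta}$, and recover the $H^{-1}$ bound by a duality argument that tests against the regular potential $\phi$ itself. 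Your $\dd\phi$ is in fact an admissible choice of the paper's $q$ (it satisfies $\delta(\dd\phi)=\delta p_h$), so the two arguments are closely related; the paper's version is a bit shorter because the $H^{-1}$ estimate comes packaged inside the Costabel--McIntosh citation, whereas your version makes the duality mechanism explicit and shows precisely why the regular-potential bound $\|\phi\|_1\lesssim\|\dd\phi\|$ is the crucial ingredient.
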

\begin{proof}
By Theorem 1.1 in \cite{CostabelMcIntosh2010}, there exists a $ q\in H^*\Lambda^{j+2}(\Omega)$ such that 
\begin{equation*}
\delta q=\delta p_h,\quad \| q\|\lesssim \|\delta p_h\|_{-1}.
\end{equation*}
Set $ q_h=I_h^{\delta} q\in V_{k,h}^{\delta,-}\Lambda^{j+2}$. Since $I_h^{\delta}$ is an $L^2$ bounded projection operator and $\delta q\in\delta V_{k,h}^{\delta,-}\Lambda^{j+2}$, by \eqref{eq:IhdeltaCDprop} we have
\begin{equation*}
\delta q_h=I_h^{\delta}(\delta q)=\delta p_h,\quad \| q_h\|\lesssim \| q\|\lesssim \|\delta p_h\|_{-1}\lesssim \| p_h\|.
\end{equation*}
By complex \eqref{femderhamcomplex1form}, we obtain $ p_h= q_h+\delta r_h$ with $r_h\in V_{k,h}^{\delta,-}\Lambda^{j+3}/\delta V_{k,h}^{\delta,-}\Lambda^{j+4}$, which ends the proof.
\end{proof}

\begin{lemma}
Let $0\leq j\leq d-2$.
It holds the $L^2$ norm equivalence
\begin{equation}\label{eq:L2normequivNedelec}
\| p_h\|\eqsim \|\delta p_h\|_{-1}+\sup_{s\in V_{k,h}^{\delta,-}\Lambda^{j+3}}\frac{( p_h, \delta s)}{\|s\|_{H^*\Lambda^{j+3}}}\quad\;\;\forall~ p_h\in V_{k,h}^{\delta,-}\Lambda^{j+2}.
\end{equation}
\end{lemma}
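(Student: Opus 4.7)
The plan is to establish the norm equivalence \eqref{eq:L2normequivNedelec} by proving each direction separately, using the $L^2$-stable decomposition from Lemma~\ref{lem:L2stabledecompNedelec} as the main tool.

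The easy direction, namely bounding the right-hand side by $\|p_h\|$, follows directly from the definitions. First, $(p_h,\dd v)=(\delta p_h,v)$ for every $v\in H_0^1\Lambda^{j+1}$ (using that $p_h \in V_{k,h}^{\delta,-}\Lambda^{j+2}\subset H^*\Lambda^{j+2}$), together with $\|\dd v\|\leq\|v\|_1$, yields $\|\delta p_h\|_{-1}\leq\|p_h\|$. Similarly, $(p_h,\delta s)\leq\|p_h\|\|\delta s\|\leq\|p_h\|\|s\|_{H^*\Lambda^{j+3}}$, so the supremum term is also bounded by $\|p_h\|$.

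For the nontrivial direction, I would invoke Lemma~\ref{lem:L2stabledecompNedelec} to decompose $p_h=q_h+\delta r_h$ with $q_h\in V_{k,h}^{\delta,-}\Lambda^{j+2}$ and $r_h\in V_{k,h}^{\delta,-}\Lambda^{j+3}/\delta V_{k,h}^{\delta,-}\Lambda^{j+4}$, satisfying $\|q_h\|\lesssim\|\delta p_h\|_{-1}$ and $\|\delta r_h\|\lesssim\|p_h\|$. Testing against $p_h$ itself gives
\begin{equation*}
\|p_h\|^2=(p_h,q_h)+(p_h,\delta r_h)\leq\|p_h\|\,\|q_h\|+\sup_{s\in V_{k,h}^{\delta,-}\Lambda^{j+3}}\frac{(p_h,\delta s)}{\|s\|_{H^*\Lambda^{j+3}}}\,\|r_h\|_{H^*\Lambda^{j+3}}.
\end{equation*}
Dividing by $\|p_h\|$ and combining with $\|q_h\|\lesssim\|\delta p_h\|_{-1}$ delivers the required inequality, provided we can verify $\|r_h\|_{H^*\Lambda^{j+3}}\lesssim\|p_h\|$.

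The main obstacle is therefore controlling $\|r_h\|$ by $\|\delta r_h\|$. Since $r_h$ was chosen in the orthogonal complement of $\delta V_{k,h}^{\delta,-}\Lambda^{j+4}$ in $V_{k,h}^{\delta,-}\Lambda^{j+3}$, the exactness of the discrete complex \eqref{femderhamcomplex1form} combined with the discrete Poincar\'e inequality for the trimmed finite element spaces (which follows from the existence of the bounded commuting projection $I_h^{\delta,-}$, applied through the Hodge star to the known result for $V_{k,h}^{\dd,-}$) gives $\|r_h\|\lesssim\|\delta r_h\|$. Hence $\|r_h\|_{H^*\Lambda^{j+3}}^2=\|r_h\|^2+\|\delta r_h\|^2\lesssim\|\delta r_h\|^2\lesssim\|p_h\|^2$, which completes the estimate.
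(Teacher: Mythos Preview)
Your proof is correct and follows essentially the same approach as the paper: both rely on the $L^2$-stable decomposition of Lemma~\ref{lem:L2stabledecompNedelec} together with the discrete Poincar\'e inequality for $V_{k,h}^{\delta,-}\Lambda^{j+3}/\delta V_{k,h}^{\delta,-}\Lambda^{j+4}$. The only cosmetic difference is that the paper starts from the triangle inequality $\|p_h\|\leq\|q_h\|+\|\delta r_h\|$ and then estimates $\|\delta r_h\|$ directly by the supremum (invoking Poincar\'e in the denominator $\|\delta s\|\gtrsim\|s\|_{H^*\Lambda^{j+3}}$), whereas you test $\|p_h\|^2$ against the decomposition and invoke Poincar\'e on $r_h$ itself; the ingredients and structure are the same.
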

\begin{proof}
Apply Lemma~\ref{lem:L2stabledecompNedelec} to $ p_h$, then 
\begin{equation*}
\| p_h\|\leq \| q_h\|+\|\delta r_h\|\lesssim \|\delta p_h\|_{-1}+\|\delta r_h\|.
\end{equation*}
On the other side, by the discrete Poincar\'e inequality \cite{Arnold2018,ArnoldFalkWinther2010,ArnoldFalkWinther2006},
\begin{align*}
\|\delta r_h\|&=\sup_{s\in V_{k,h}^{\delta,-}\Lambda^{j+3}/\delta V_{k,h}^{\delta,-}\Lambda^{j+4}}\frac{(\delta r_h, \delta s)}{\|\delta s\|}\lesssim \sup_{s\in V_{k,h}^{\delta,-}\Lambda^{j+3}/\delta V_{k,h}^{\delta,-}\Lambda^{j+4}}\frac{(\delta r_h, \delta s)}{\|s\|_{H^*\Lambda^{j+3}}} \\
&\leq \sup_{s\in V_{k,h}^{\delta,-}\Lambda^{j+3}}\frac{( p_h- q_h, \delta s)}{\|s\|_{H^*\Lambda^{j+3}}} \leq \| q_h\| + \sup_{s\in V_{k,h}^{\delta,-}\Lambda^{j+3}}\frac{( p_h, \delta s)}{\|s\|_{H^*\Lambda^{j+3}}} \\
&\lesssim \|\delta  p_h\|_{-1} + \sup_{s\in V_{k,h}^{\delta,-}\Lambda^{j+3}}\frac{( p_h, \delta s)}{\|s\|_{H^*\Lambda^{j+3}}}.
\end{align*}
Combining the last two inequalities yields \eqref{eq:L2normequivNedelec}.
\end{proof}


We are now in a position to derive the discrete inf-sup condition.
\begin{lemma}
Let $0\leq j\leq d-2$.
For $ q\in V_{k,h}^{\delta,-}\Lambda^{j+2}$, it holds the discrete inf-sup condition
\begin{align}\label{BE1}
\| q\|\lesssim \sup _{\psi\in \Phi_h,\,
s\in V_{k,h}^{\delta,-}\Lambda^{j+3}}
\frac{b(\psi, s; q)}{\|\psi\|_{1}+\|s\|_{H^*\Lambda^{j+3}}}.
\end{align}
\end{lemma}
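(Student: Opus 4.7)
The plan is to use the $L^2$-norm equivalence \eqref{eq:L2normequivNedelec} already established for $V_{k,h}^{\delta,-}\Lambda^{j+2}$, together with the Fortin property \eqref{eq:MINIcurlcd} of $I_h$, to reduce the discrete inf-sup \eqref{BE1} to two supremums, each matching a canonical choice of the test pair $(\psi,s)\in \Phi_h\times V_{k,h}^{\delta,-}\Lambda^{j+3}$. This is the discrete analogue of the continuous inf-sup proof for \eqref{AH}, with the continuous Helmholtz decomposition \eqref{eq:2formHelmholtzdecomp1} replaced by the stable splitting of Lemma~\ref{lem:L2stabledecompNedelec}.

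First, I would apply \eqref{eq:L2normequivNedelec} to the given $q\in V_{k,h}^{\delta,-}\Lambda^{j+2}$ to obtain
\begin{equation*}
\|q\|\lesssim \|\delta q\|_{-1}+\sup_{s\in V_{k,h}^{\delta,-}\Lambda^{j+3}}\frac{(q,\delta s)}{\|s\|_{H^{*}\Lambda^{j+3}}}.
\end{equation*}
The second supremum equals $\sup_{s}b(0,s;q)/\|s\|_{H^{*}\Lambda^{j+3}}$, which is controlled by the right-hand side of \eqref{BE1} by taking $\psi=0$. It remains to dominate $\|\delta q\|_{-1}$ by a supremum over $\Phi_h$. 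Writing $\|\delta q\|_{-1}=\sup_{v\in H_{0}^{1}\Lambda^{j+1}(\Omega)}(q,\dd v)/\|v\|_{1}$ and applying the Fortin identity \eqref{eq:MINIcurlcd} to the interpolant $I_h v\in\Phi_h$ gives $(q,\dd v)=(q,\dd I_h v)$. Combining the interpolation estimate \eqref{BEqs} with $s=m=1$ and the Poincar\'e inequality on $H_{0}^{1}\Lambda^{j+1}(\Omega)$ yields the $H^{1}$-stability $\|I_h v\|_{1}\lesssim \|v\|_{1}$, so
\begin{equation*}
\|\delta q\|_{-1}\lesssim \sup_{\psi\in\Phi_h}\frac{(q,\dd\psi)}{\|\psi\|_{1}}=\sup_{\psi\in\Phi_h}\frac{b(\psi,0;q)}{\|\psi\|_{1}},
\end{equation*}
and this too is bounded by the right-hand side of \eqref{BE1} by taking $s=0$. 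Adding the two contributions proves \eqref{BE1}.

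The main technical hurdle is the $H^{1}$-stability of the averaging Fortin operator $I_h$: because the vertex and lower-dimensional DoFs are defined by averaging $Q_{k,T}v$ over the local stars $\mathcal T_{\texttt v}$ and $\mathcal T_f$, the bound $\|I_h v\|_{1}\lesssim\|v\|_1$ for $v\in H_{0}^{1}\Lambda^{j+1}(\Omega)$ is not immediate and must be extracted from \eqref{BEqs} on the patches $\omega_T$ using shape regularity and a triangle inequality against $v$ itself. Once this stability is in hand, the remainder of the argument is a direct translation of the proof of the continuous inf-sup condition \eqref{AH} to the discrete level.
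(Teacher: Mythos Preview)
Your proposal is correct and follows essentially the same route as the paper: start from the $L^2$ norm equivalence \eqref{eq:L2normequivNedelec}, control the $s$-part of the supremum trivially by choosing $\psi=0$, and control $\|\delta q\|_{-1}$ by the Fortin identity \eqref{eq:MINIcurlcd} together with the $H^1$-stability of $I_h$ deduced from \eqref{BEqs} with $m=s=1$. The paper orders the steps slightly differently (bounding the two pieces first and then invoking \eqref{eq:L2normequivNedelec}), but the argument is the same.
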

\begin{proof}
Employ \eqref{eq:MINIcurlcd} and \eqref{BEqs} with $m=s=1$ to have
\begin{align*}
\|\delta  q\|_{-1}
&=\sup_{\phi\,\in  H_{0}^{1}\Lambda^{j+1}(\Omega)}
\frac{(\dd\phi,  q)}{\|\phi\|_{1}}
\,=\sup_{\phi\,\in  H_{0}^{1}\Lambda^{j+1}(\Omega)}
\frac{(\dd (I_{h}\phi),  q)}{\|\phi\|_{1}} \\
&\lesssim \sup _{\phi\,\in  H_{0}^{1}\Lambda^{j+1}(\Omega)}
\frac{(\dd (I_{h}\phi),  q)}{\|I_{h}\phi\|_{1}}
\,\leq \sup _{\psi\,\in \Phi_h}
\frac{(\dd \psi,  q)}{\|\psi\|_{1}} \\
&\leq \sup _{\psi\,\in \Phi_h,\,\,
s\,\in V_{k,h}^{\delta,-}\Lambda^{j+3}}
\frac{b(\psi, s; q)}{\|\psi\|_{1}+\|s\|_{H^*\Lambda^{j+3}}}.
\end{align*}
On the other hand,
\begin{align*}
\sup_{s\in V_{k,h}^{\delta,-}\Lambda^{j+3}}\frac{(q, \delta s)}{\|s\|_{H^*\Lambda^{j+3}}}
\,\leq \sup _{\psi\,\in \Phi_h,\,\,
s\,\in V_{k,h}^{\delta,-}\Lambda^{j+3}}
\frac{b(\psi, s; q)}{\|\psi\|_{1}+\|s\|_{H^*\Lambda^{j+3}}}.
\end{align*}
Therefore, \eqref{BE1} follows from \eqref{eq:L2normequivNedelec} and the last two inequalities.
\end{proof}

\begin{theorem}\label{thm:dfemMINIunisol}
For $0\leq j\leq d-1$, 
the decoupled finite element method \eqref{decoupleMINI} is well-posed. Moreover, $r_{h}=0$ and $\lambda_{h}=z_{h}=0$.
\end{theorem}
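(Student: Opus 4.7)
The plan is to exploit the fact that \eqref{decoupleMINI} decouples into three saddle-point blocks that can be solved sequentially: \eqref{decoupleMINI1}--\eqref{decoupleMINI10} determines $(w_h,\lambda_h)$; with $w_h$ in hand, \eqref{decoupleMINI2}--\eqref{decoupleMINI3} determines $(\phi_h,p_h,r_h)$; and with $\phi_h$ in hand, \eqref{decoupleMINI4}--\eqref{decoupleMINI40} determines $(u_h,z_h)$. Well-posedness thus reduces to establishing it for each block independently.

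For the generalized Stokes block \eqref{decoupleMINI2}--\eqref{decoupleMINI3}, I would apply the Babu\v{s}ka--Brezzi theory, invoking the discrete coercivity \eqref{BE2} and the discrete inf-sup condition \eqref{BE1} already proved above. For the two second-order blocks I would follow the template of Lemma~\ref{lem:4thorderpdeweakformag}: the short exact sequence \eqref{femderhamcomplex1form1} (respectively \eqref{femderhamcomplex1form2}) guarantees the subcomplex property $\dd\mathring{V}_{k+1,h}^{\dd,-}\Lambda^{j-1}\subset \mathring{V}_{k,h}^{\dd}\Lambda^{j}$, which, together with the discrete Poincar\'e inequality on the kernel, yields the coercivity and the discrete inf-sup condition needed for an application of the Zulehner theory.

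Once well-posedness is in place, I would prove $\lambda_h=z_h=0$ and $r_h=0$ by mimicking the continuous arguments of Lemma~\ref{lem:stokesge} and Theorem~\ref{thm:unisoldecoupledformnew}. For $\lambda_h$, I would test \eqref{decoupleMINI1} with $v=\dd\lambda_h$, which is admissible since $\dd\lambda_h\in \mathring{V}_{k,h}^{\dd}\Lambda^{j}$ by \eqref{femderhamcomplex1form1}, to obtain $\|\dd\lambda_h\|^2=(f,\dd\lambda_h)=\langle\delta f,\lambda_h\rangle=0$ from $\delta f=0$; then test \eqref{decoupleMINI10} with $\eta=\lambda_h$ to conclude $\|\lambda_h\|^2=0$. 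The argument for $z_h$ is parallel, relying on $(g,z_h)=0$, which follows from $g\in\delta H_0^{\rm Gd}\Lambda^j$ together with the vanishing trace of $z_h$.

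The step I expect to be the main obstacle is $r_h=0$. In the continuous setting the test $q=\delta r$ in the divergence constraint immediately annihilates $\delta r$ because $\dd\phi$ lies in the test space $L^2\Lambda^{j+2}$, but in the discrete setting $\dd\phi_h$ is not a priori in $V_{k,h}^{\delta,-}\Lambda^{j+2}$. The remedy is an integration-by-parts argument: since $\phi_h\in H_0^1\Lambda^{j+1}$ has vanishing trace and $\delta^2=0$, the identity $(\dd\phi_h,\delta r_h)=(\phi_h,\delta^2 r_h)=0$ holds. Testing \eqref{decoupleMINI3} with $q=\delta r_h\in V_{k,h}^{\delta,-}\Lambda^{j+2}$, which is admissible by the complex \eqref{femderhamcomplex1form}, then forces $\|\delta r_h\|^2=0$, and finally testing \eqref{decoupleMINI2} with $\psi=0$, $s=r_h$ yields $\|r_h\|^2=0$.
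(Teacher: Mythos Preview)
Your proposal is correct and mirrors the paper's proof: Babu\v{s}ka--Brezzi theory with \eqref{BE2}--\eqref{BE1} for the Stokes block, the argument of Lemma~\ref{lem:4thorderpdeweakformag} via the complexes \eqref{femderhamcomplex1form1}--\eqref{femderhamcomplex1form2} for the two second-order blocks, and the same test-function choices ($v=\dd\lambda_h$, $q=\delta r_h$, $\chi=\dd z_h$, followed by $\eta=\lambda_h$, $s=r_h$, $\mu=z_h$) to annihilate the multipliers; your integration-by-parts justification of $(\dd\phi_h,\delta r_h)=0$ is exactly the step the paper leaves implicit. The only minor omission is the case $j=d-1$, where \eqref{BE2}--\eqref{BE1} are not stated because $V_{k,h}^{\delta,-}\Lambda^{j+2}$ and $V_{k,h}^{\delta,-}\Lambda^{j+3}$ are trivial and the block degenerates to a coercive Poisson problem, which the paper treats separately.
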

\begin{proof}
Both \eqref{decoupleMINI1}-\eqref{decoupleMINI10} and \eqref{decoupleMINI4}-\eqref{decoupleMINI40} are the mixed finite element methods for second-order exterior differential equations, whose well-posedness follows from the exactness of complexes \eqref{femderhamcomplex1form1}-\eqref{femderhamcomplex1form2} and the proof of Lemma~\ref{lem:4thorderpdeweakformag}.

When $j=d-1$, 
the discretizaiton \eqref{decoupleMINI2}-\eqref{decoupleMINI3} is a conforming finite element method of the Poisson equation, which is well-posed.
For $0\leq j\leq d-2$, 
by applying the Babu\v{s}a-Brezzi theory \cite{BoffiBrezziFortin2013}, the well-posedness of the mixed finite element method~\eqref{decoupleMINI2}-\eqref{decoupleMINI3} follows from the the discrete coercivity (\ref{BE2}),
and the discrete inf-sup condition (\ref{BE1}).

Choose $q=\delta r_h$ in equation \eqref{decoupleMINI3} to get $\delta r_h = 0$. By setting $s = r_h$ and $\psi = 0$ in equation~\eqref{decoupleMINI2}, we acquire that $r_h = 0$.
Take $v=\dd\lambda_h$ in equation \eqref{decoupleMINI1} to have $\dd\lambda_h = 0$.
Then we derive $\lambda_h = 0$ by choosing $\eta=\lambda_h$ in equation \eqref{decoupleMINI10}.
Apply the similar argument to achieve $z_h=0$ from the mixed method \eqref{decoupleMINI4}-\eqref{decoupleMINI40}.
\end{proof}

\begin{example}\rm
Taking $j=0$ and $d=3$, the mixed finite element method \eqref{decoupleMINI2}-\eqref{decoupleMINI3} is to find
$\phi_{h}\in\Phi_h$, $p_h\in V_{k,h}^{\rm ND}$ and $r_h\in V_{k,h}^{\grad}/\mathbb R$ such that
\begin{align*}
 (\nabla\phi_h,\nabla\psi)+(\curl\psi+\nabla s, p_h)&=(\nabla w_h,\psi) \quad\;\; \forall~\psi\in \Phi_h, s\in V_{k,h}^{\grad}/\mathbb R,  \\
 (\curl \phi_h+\nabla r_h, q) &=0  \qquad\qquad\quad \forall~ q\in V_{k,h}^{\rm ND},
\end{align*}
where $V_{k,h}^{\grad}=V_{k,h}^{\delta,-}\Lambda^{3}$ is the Lagrange element space, and $V_{k,h}^{\rm ND}=V_{k,h}^{\delta,-}\Lambda^{2}$ is the N\'{e}d\'{e}lec element space of the first kind \cite{Nedelec1980}.
\end{example}

\begin{example}\rm
Taking $j=1$ and $d=3$, the mixed finite element method \eqref{decoupleMINI2}-\eqref{decoupleMINI3} is to find
$\phi_{h}\in\Phi_h$ and $p_h\in V_{k,h}^{\grad}/\mathbb R$ such that
\begin{align*}
 (\nabla\phi_h,\nabla\psi)+(\div\psi, p_h)&=(\curl w_h,\psi) \quad\;\; \forall~\psi\in \Phi_h,  \\
 (\div\phi_h, q) &=0  \qquad\qquad\qquad \forall~ q\in V_{k,h}^{\grad}/\mathbb R.
\end{align*}
This is exactly the MINI element method for Stokes equation in \cite[Section 8.7.1]{BoffiBrezziFortin2013}.
\end{example}

\begin{example}\rm
Taking $j=d-1$, the finite element method \eqref{decoupleMINI2}-\eqref{decoupleMINI3} is to find
$\phi_{h}\in\Phi_h/\mathbb R$ such that
\begin{equation*}
(\nabla\phi_{h},\nabla\psi) =(\div w_{h},\psi)
  \quad\forall~\psi\in \Phi_h/\mathbb R.
\end{equation*}
This is a conforming finite element method of the Poisson equation.
\end{example}



Furthermore, we consider the local elimination of $\lambda_h$, $r_h$ and $z_h$ in the decoupled finite element method \eqref{decoupleMINI}.
For finite element space $V_h=\mathring{V}_{k+1,h}^{\dd,-}\Lambda^{j-1}$ or $V_{k,h}^{\delta,-}\Lambda^{j+3}$, let $\{\varphi_i^V\}_1^{N_V}$ be the basis functions of $V_h$, and introduce the discrete inner product
\begin{equation*}
\langle \eta, \mu\rangle_D:=\sum_{i=1}^{N_V}\eta_i\mu_i\|\varphi_i^V\|^2, \quad\textrm{ where } \eta=\sum_{i=1}^{N_V}\eta_i\varphi_i^V,\, \mu=\sum_{i=1}^{N_V}\mu_i\varphi_i^V.
\end{equation*}
This inner product induces the following norm equivalence:
\begin{equation}\label{eq:discreteL2normequiv}
\|\mu\|^2\eqsim \langle \mu, \mu\rangle_D\quad\forall~\mu\in V_h.
\end{equation}
The matrix representation of $\langle \eta, \mu\rangle_D$ is simply the diagonal matrix of the mass matrix of $(\eta, \mu)$.

Thanks to $r_{h}=0$ and $\lambda_{h}=z_{h}=0$, the decoupled finite element method \eqref{decoupleMINI} is equivalent to finding $w_{h}\in \mathring{V}_{k,h}^{\dd}\Lambda^{j}$,
$\phi_{h}\in\Phi_h$,
$p_h\in V_{k,h}^{\delta,-}\Lambda^{j+2}$, $r_h\in V_{k,h}^{\delta,-}\Lambda^{j+3}$,
$u_h\in \mathring{V}_{k+1,h}^{\dd,-}\Lambda^{j}$ and $\lambda_h,z_h\in \mathring{V}_{k+1,h}^{\dd,-}\Lambda^{j-1}$ such that
\begin{subequations}\label{decoupleMINIdiag}
\begin{align}
(\dd w_h,\dd v)+(v,\dd\lambda_h)&=(f,v)
  &&\!\!\forall \, v\in \mathring{V}_{k,h}^{\dd}\Lambda^{j},\label{decoupleMINIdiag1}\\
(w_h,\dd\eta)-\langle\lambda_h,\eta\rangle_D&=0 &&\!\!\forall \, \eta\in \mathring{V}_{k+1,h}^{\dd,-}\Lambda^{j-1}, \label{decoupleMINIdiag10} \\
(\nabla\phi_{h},\nabla\psi)+\langle r_h,s\rangle_D+(\dd \psi
 +\delta s, p_{h})&=(\dd w_{h},\psi)
  &&\!\!\forall \, \psi\in \Phi_h, s\in V_{k,h}^{\delta,-}\Lambda^{j+3},\label{decoupleMINIdiag2} \\
 (\dd \phi_{h}+\delta r_{h}, q) &=0  &&\!\!\forall \, q\in V_{k,h}^{\delta,-}\Lambda^{j+2},\label{decoupleMINIdiag3}\\
  (\dd u_h,\dd \chi)+(\chi,\dd z_h)&=(\phi_{h},\dd\chi) &&\!\!\forall \, \chi\in \mathring{V}_{k+1,h}^{\dd,-}\Lambda^{j},\label{decoupleMINIdiag4} \\
(u_h,\dd\mu)-\langle z_h,\mu\rangle_D&=(g,\mu) &&\!\!\forall \,  \mu\in \mathring{V}_{k+1,h}^{\dd,-}\Lambda^{j-1}. \label{decoupleMINIdiag40}
\end{align}
\end{subequations}

\begin{theorem}\label{thm:dfemMINIdiagunisol}
For $0\leq j\leq d-1$, 
the decoupled finite element method \eqref{decoupleMINIdiag} is well-posed, and its solution coincides with that of the decoupled finite element method \eqref{decoupleMINI}.
\end{theorem}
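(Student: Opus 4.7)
My plan is to reduce the well-posedness and equivalence claim of \eqref{decoupleMINIdiag} directly to the already-established Theorem~\ref{thm:dfemMINIunisol} for \eqref{decoupleMINI}. The key observation is that the only change from \eqref{decoupleMINI} to \eqref{decoupleMINIdiag} is the replacement of $(\lambda_h,\eta)$, $(r_h,s)$, $(z_h,\mu)$ by the discrete inner products $\langle\lambda_h,\eta\rangle_D$, $\langle r_h,s\rangle_D$, $\langle z_h,\mu\rangle_D$, and by \eqref{eq:discreteL2normequiv} these remain symmetric positive-definite forms equivalent to the $L^2$ inner product on the relevant finite element spaces. So if I can mimic the argument of Theorem~\ref{thm:dfemMINIunisol} to conclude $r_h=\lambda_h=z_h=0$, the two systems will collapse to the same reduced problem in $(w_h,\phi_h,p_h,u_h)$.

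Concretely, I would proceed as follows. First, for $\lambda_h$: by the exactness of \eqref{femderhamcomplex1form1}, $\dd\lambda_h\in \mathring{V}_{k,h}^{\dd}\Lambda^{j}$, so I may take $v=\dd\lambda_h$ in \eqref{decoupleMINIdiag1}; using $\dd\dd=0$ and integrating by parts $(f,\dd\lambda_h)=(\delta f,\lambda_h)=0$ (boundary terms vanish since $\lambda_h$ has vanishing trace, and $\delta f=0$ by assumption), I obtain $\|\dd\lambda_h\|^2=0$, hence $\dd\lambda_h=0$. Then choosing $\eta=\lambda_h$ in \eqref{decoupleMINIdiag10} gives $\langle\lambda_h,\lambda_h\rangle_D=0$, and the norm equivalence \eqref{eq:discreteL2normequiv} forces $\lambda_h=0$. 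The argument for $z_h=0$ is identical, using $(g,\dd\mu)=(g,\dd z_h)$ replaced by using $\dd z_h\in\mathring{V}_{k+1,h}^{\dd,-}\Lambda^{j}$ as test function in \eqref{decoupleMINIdiag4} together with $(\phi_h,\dd\dd z_h)=0$; more carefully, I follow the proof of Theorem~\ref{thm:dfemMINIunisol} verbatim, using $\langle z_h,z_h\rangle_D=0$ at the last step. For $r_h$: take $q=\delta r_h\in V_{k,h}^{\delta,-}\Lambda^{j+2}$ (valid by the complex \eqref{femderhamcomplex1form}) in \eqref{decoupleMINIdiag3}; since $\phi_h\in H_0^1\Lambda^{j+1}$, integration by parts together with $\delta^2=0$ yields $(\dd\phi_h,\delta r_h)=0$, hence $\|\delta r_h\|^2=0$; then setting $\psi=0$, $s=r_h$ in \eqref{decoupleMINIdiag2} gives $\langle r_h,r_h\rangle_D=0$, whence $r_h=0$ by \eqref{eq:discreteL2normequiv}.

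With $r_h=\lambda_h=z_h=0$ established, the system \eqref{decoupleMINIdiag} collapses to precisely the same four equations in $(w_h,\phi_h,p_h,u_h)$ that \eqref{decoupleMINI} reduces to under the same vanishing multipliers. I would then invoke Theorem~\ref{thm:dfemMINIunisol} both ways: existence and uniqueness of a solution to \eqref{decoupleMINI} supply an admissible candidate $(w_h,\phi_h,p_h,0,u_h,0,0)$ for \eqref{decoupleMINIdiag} (the discrete-inner-product terms vanish identically when the multipliers are zero, so consistency is free), and conversely any solution to \eqref{decoupleMINIdiag} has the same vanishing multipliers and therefore solves \eqref{decoupleMINI}, so uniqueness transfers. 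Because the finite element system is square, existence plus uniqueness of the homogeneous problem already gives invertibility of the coefficient matrix, yielding well-posedness; stability estimates uniform in $h$ are inherited directly from those of \eqref{decoupleMINI}.

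The only point requiring some care is the step showing $\delta r_h=0$, since one must verify that the integration by parts producing $(\dd\phi_h,\delta r_h)=0$ is legitimate: this relies on $\delta r_h\in V_{k,h}^{\delta,-}\Lambda^{j+2}\subset H^*\Lambda^{j+2}$ so that $\delta^2 r_h$ makes sense and equals zero, together with $\phi_h\in H_0^1\Lambda^{j+1}$ ensuring the boundary contribution in \eqref{greenidentity} vanishes. Once this technicality is handled, the remainder of the argument is a bookkeeping reduction to Theorem~\ref{thm:dfemMINIunisol}, and no new coercivity or inf-sup analysis is required.
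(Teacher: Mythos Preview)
Your proposal is correct and follows essentially the same approach as the paper: the paper's one-line proof invokes ``the similar argument of Theorem~\ref{thm:dfemMINIunisol}'' together with the norm equivalence \eqref{eq:discreteL2normequiv}, and you have simply fleshed out that argument in detail---repeating the vanishing-multiplier steps of Theorem~\ref{thm:dfemMINIunisol} with $\langle\cdot,\cdot\rangle_D$ in place of $(\cdot,\cdot)$ and using \eqref{eq:discreteL2normequiv} at the final step to conclude $\lambda_h=r_h=z_h=0$. The only minor variation is that you deduce well-posedness via a square-system/reduction argument rather than re-verifying the Babu\v{s}ka--Brezzi and Zulehner conditions directly with the modified bilinear forms; both routes are valid and equivalent here.
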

\begin{proof}
By applying the similar argument of Theorem~\ref{thm:dfemMINIunisol}, we conclude the result from the discrete $L^2$ norm equivalence \eqref{eq:discreteL2normequiv}.
\end{proof}

Since the matrices of $\langle\lambda_h,\eta\rangle_D$, $\langle r_h,s\rangle_D$ and $\langle z_h,\mu\rangle_D$ are diagonal, we can locally eliminate $\lambda_h$, $r_h$ and $z_h$ when assembling the linear system for the decoupled mixed method \eqref{decoupleMINIdiag}. The remaining unknowns are only $w_h$, $\phi_h$, $p_h$ and $u_h$. After this elimination, the coefficient matrices of the mixed method \eqref{decoupleMINIdiag1}-\eqref{decoupleMINIdiag10} and the mixed method \eqref{decoupleMINIdiag4}-\eqref{decoupleMINIdiag40} become symmetric and positive definite. Similar techniques have been applied for the quad-curl problem in \cite{Huang2023}, and for the linear algebraic system of the Maxwell equation in \cite[(77)-(78)]{ChenWuZhongZhou2018}.

\subsection{Error analysis}
Next we present the error analysis of the decoupled finite element method \eqref{decoupleMINI}.
\begin{lemma}
Let $0\leq j\leq d-1$.
Let $(w, 0)\in H_{0}\Lambda^{j}\times H_{0}\Lambda^{j-1}$ be the solution of problem \eqref{decoupledformnew1}-\eqref{decoupledformnew10}, and $(w_h, 0)\in \mathring{V}_{k,h}^{\dd}\Lambda^{j}\times \mathring{V}_{k+1,h}^{\dd,-}\Lambda^{j-1}$ be the solution of the mixed finite element method \eqref{decoupleMINI1}-\eqref{decoupleMINI10}. Assume $w\in H^{k+1}\Lambda^{j}$. We have
\begin{align}
\label{eq:whphestimate}
\|\dd(w-w_{h})\|&\lesssim h^{k}|\dd w|_{k}, \\
\label{eq:whphestimate0}
\|w-w_{h}\|&\lesssim h^{k}(h|w|_{k+1}+|\dd w|_{k}).
\end{align}
\end{lemma}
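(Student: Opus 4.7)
The plan is to compare the discrete solution to the commuting interpolant by setting $w_I := I_h^{\dd} w \in \mathring{V}_{k,h}^{\dd}\Lambda^{j}$ and analyzing the discrete error $e_h := w_h - w_I$. Since at both continuous and discrete levels the argument of Lemma~\ref{lem:4thorderpdeweakformag} forces $\lambda = \lambda_h = 0$, subtracting \eqref{decoupleMINI1}--\eqref{decoupleMINI10} from their continuous counterparts yields the Galerkin orthogonalities
\begin{align*}
(\dd(w - w_h), \dd v) &= 0 && \forall\, v \in \mathring{V}_{k,h}^{\dd}\Lambda^{j}, \\
(w - w_h, \dd\eta) &= 0 && \forall\, \eta \in \mathring{V}_{k+1,h}^{\dd,-}\Lambda^{j-1}.
\end{align*}

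For \eqref{eq:whphestimate}, I would test the first orthogonality with $v = e_h$ to get $\|\dd e_h\|^2 = (\dd(w - w_I), \dd e_h)$, whence $\|\dd e_h\| \leq \|\dd(w - w_I)\|$ by Cauchy--Schwarz. The commuting property \eqref{eq:IhdCDprop} rewrites $\dd(w-w_I)$ as $\dd w - I_h^{\dd}(\dd w)$, and a standard Bramble--Hilbert estimate for the $L^2$-bounded projection $I_h^{\dd}$ (which preserves $\mathbb P_k\Lambda^{j+1}$) then gives $\|\dd w - I_h^{\dd}(\dd w)\| \lesssim h^k |\dd w|_k$. A triangle inequality delivers \eqref{eq:whphestimate}.

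For \eqref{eq:whphestimate0} I would exploit the discrete Hodge decomposition afforded by the exactness of \eqref{femderhamcomplex1form1}: split $e_h = \dd\xi_h + z_h$ with $\xi_h \in \mathring{V}_{k+1,h}^{\dd,-}\Lambda^{j-1}$ and $z_h \in \mathring{V}_{k,h}^{\dd}\Lambda^{j}$ $L^2$-orthogonal to $\dd\mathring{V}_{k+1,h}^{\dd,-}\Lambda^{j-1}$. The discrete Poincar\'e inequality on this orthogonal complement gives $\|z_h\| \lesssim \|\dd z_h\| = \|\dd e_h\| \lesssim h^k|\dd w|_k$. The closed component is controlled by
$$\|\dd\xi_h\|^2 = (e_h, \dd\xi_h) = (w_h, \dd\xi_h) - (w_I, \dd\xi_h) = -(w_I - w, \dd\xi_h),$$
where $(w_h, \dd\xi_h) = 0$ by \eqref{decoupleMINI10} with $\lambda_h = 0$ and the inserted null term $(w, \dd\xi_h) = 0$ comes from the continuous equation with $\lambda = 0$. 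Cauchy--Schwarz and \eqref{eq:Ihddeltaestimate} then give $\|\dd\xi_h\| \leq \|w - w_I\| \lesssim h^{k+1}|w|_{k+1}$, and a final triangle inequality with $\|w - w_I\|$ yields \eqref{eq:whphestimate0}.

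The main obstacle is the bound on the closed component $\dd\xi_h$: it rests on a double cancellation requiring \emph{both} the continuous orthogonality $(w, \dd\xi_h) = 0$ and the discrete orthogonality $(w_h, \dd\xi_h) = 0$, which is why the analysis is clean only because $\lambda = \lambda_h = 0$. The exactness of \eqref{femderhamcomplex1form1} is indispensable here, since it guarantees every closed element of $\mathring{V}_{k,h}^{\dd}\Lambda^{j}$ is $\dd$ of something in the adjacent discrete space, so that $\dd\xi_h$ can be paired against $(w_h, \dd\cdot)$ through a legitimate test function.
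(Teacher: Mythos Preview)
Your proof is correct and follows essentially the same approach as the paper. For \eqref{eq:whphestimate} the argument is identical: test the first Galerkin orthogonality with $I_h^{\dd}w - w_h$ and invoke the commuting property \eqref{eq:IhdCDprop} together with the interpolation estimate. For \eqref{eq:whphestimate0} the paper is terse (``triangle inequality, discrete Poincar\'e inequalities, \eqref{eq:IhdCDprop} and the interpolation estimates''), but what you spell out---the discrete Hodge decomposition of $e_h$, controlling the coclosed part by discrete Poincar\'e and the closed part via the second error equation \eqref{erroreqnwh2}---is exactly the mechanism behind that sentence; your splitting of $(w-w_h,\dd\xi_h)=0$ into the two separate cancellations $(w_h,\dd\xi_h)=0$ and $(w,\dd\xi_h)=0$ is equivalent to using \eqref{erroreqnwh2} directly.
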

\begin{proof}
Subtract \eqref{decoupleMINI1}-\eqref{decoupleMINI10} from \eqref{decoupledformnew1}-\eqref{decoupledformnew10} to get the error equations
\begin{align}
\label{erroreqnwh1}
 (\dd(w-w_{h}),\dd v) &=0 \qquad
  \forall \,\, v\in\mathring{V}_{k,h}^{\dd}\Lambda^{j}, \\
\label{erroreqnwh2}
 (w-w_{h}, \dd\eta)&=0 \qquad \forall \,\, \eta\in \mathring{V}_{k+1,h}^{\dd,-}\Lambda^{j-1}.
\end{align}
Choosing $v=I_h^{\dd}w-w_{h}$ in \eqref{erroreqnwh1}, we acquire from \eqref{eq:IhdCDprop} that
\begin{align*}
\|\dd(w-w_{h})\|^2&=(\dd(w-w_{h}),\dd(w-I_h^{\dd}w)) \lesssim \|\dd(w-w_{h})\|\|\dd w-I_h^{\dd}(\dd w)\|.
\end{align*}
Hence, it follows
\begin{equation*}
\|\dd(w-w_{h})\|\lesssim \|\dd w-I_h^{\dd}(\dd w)\|,
\end{equation*}
which together with \eqref{eq:Ihddeltaestimate} implies \eqref{eq:whphestimate}.

Finally, by employing the triangle inequality, the discrete Poincar\'e inequalities,~\eqref{eq:IhdCDprop} and the interpolation estimates \eqref{eq:Ihddelta-estimate}-\eqref{eq:Ihddeltaestimate}, we conclude estimate \eqref{eq:whphestimate0} from estimate \eqref{eq:whphestimate}.
\end{proof}

\begin{lemma}
Let $0\leq j\leq d-1$.
Let $(\phi,  p, 0)\in H_{0}^{1}\Lambda^{j+1}\times L^{2}\Lambda^{j+2}\times H^*\Lambda^{j+3}$ be the solution of problem \eqref{decoupledformnew2}-\eqref{decoupledformnew3}, and $(\phi_h,  p_h, 0)\in \Phi_h\times V_{k,h}^{\delta,-}\Lambda^{j+2}\times V_{k,h}^{\delta,-}\Lambda^{j+3}$ the solution of the mixed finite element method \eqref{decoupleMINI2}-\eqref{decoupleMINI3}. Assume $\phi\in H^{k+1}\Lambda^{j+1}$ and $ p\in H^{k}\Lambda^{j+2}$. We have
\begin{equation}\label{eq:phihphestimate}
\|\phi-\phi_{h}\|_{1}
+\| p- p_h\|\lesssim h^{k}(|\phi|_{k+1}+| p|_{k}) + \|\dd(w- w_{h})\|.
\end{equation}
\end{lemma}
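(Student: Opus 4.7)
The plan is to follow the standard Strang-Brezzi error analysis for saddle-point problems, exploiting that the continuous and discrete formulations have identical structure except for the data $\dd w$ versus $\dd w_h$. First, I would subtract the discrete equations \eqref{decoupleMINI2}-\eqref{decoupleMINI3} from the continuous ones \eqref{decoupledformnew2}-\eqref{decoupledformnew3} (restricted to discrete test functions) and use $r=0=r_h$ to reduce the error equations to
\begin{align*}
(\nabla(\phi-\phi_h),\nabla\psi)+(\dd\psi+\delta s,p-p_h)&=(\dd(w-w_h),\psi),\\
(\dd(\phi-\phi_h),q)&=0,
\end{align*}
for all $\psi\in\Phi_h$, $s\in V_{k,h}^{\delta,-}\Lambda^{j+3}$, $q\in V_{k,h}^{\delta,-}\Lambda^{j+2}$.

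Next, I would split the error using the Fortin operator $I_h\phi\in\Phi_h$ and the canonical interpolant $I_h^{\delta,-}p\in V_{k,h}^{\delta,-}\Lambda^{j+2}$. The key observation is the commuting property \eqref{eq:MINIcurlcd}, which gives $(\dd(I_h\phi-\phi),q)=0$ for all $q\in V_{k,h}^{\delta,-}\Lambda^{j+2}$, hence $(I_h\phi-\phi_h,0)$ lies in the discrete kernel of $b$. Testing the split error equation with $\psi=I_h\phi-\phi_h$ and $s=0$, the cross term $(\dd(I_h\phi-\phi_h),I_h^{\delta,-}p-p_h)$ vanishes because $I_h^{\delta,-}p-p_h\in V_{k,h}^{\delta,-}\Lambda^{j+2}$. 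Applying the discrete coercivity \eqref{BE2} and the Cauchy-Schwarz inequality yields
\begin{equation*}
\|I_h\phi-\phi_h\|_1\lesssim\|\dd(w-w_h)\|+\|\phi-I_h\phi\|_1+\|p-I_h^{\delta,-}p\|.
\end{equation*}

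For the pressure error, I would bound $\|I_h^{\delta,-}p-p_h\|$ using the discrete inf-sup condition \eqref{BE1}: the numerator $b(\psi,s;I_h^{\delta,-}p-p_h)$ is recovered from the split error equation, and each of the four resulting terms is dominated by $(\|\psi\|_1+\|s\|_{H^*\Lambda^{j+3}})$ times the quantities $\|\dd(w-w_h)\|$, $\|\phi-I_h\phi\|_1$, $\|p-I_h^{\delta,-}p\|$, and $\|I_h\phi-\phi_h\|_1$, with the last term already controlled by the coercivity step. Two applications of the triangle inequality then give
\begin{equation*}
\|\phi-\phi_h\|_1+\|p-p_h\|\lesssim\|\dd(w-w_h)\|+\|\phi-I_h\phi\|_1+\|p-I_h^{\delta,-}p\|,
\end{equation*}
and the interpolation estimates \eqref{BEqs} and \eqref{eq:Ihddelta-estimate} finish the proof.

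The main technical point to verify carefully is that \eqref{eq:MINIcurlcd} is strong enough to place $I_h\phi-\phi_h$ in the discrete kernel of $b$ with respect to the augmented bilinear form $(\dd\psi+\delta s,q)$; this works because setting $s=0$ reduces the kernel condition to the usual $(\dd\psi,q)=0$, which is exactly what \eqref{eq:MINIcurlcd} guarantees. Everything else is routine saddle-point calculus.
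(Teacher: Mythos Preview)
Your proposal is correct and follows essentially the same route as the paper: derive the error equations by subtracting \eqref{decoupleMINI2}--\eqref{decoupleMINI3} from \eqref{decoupledformnew2}--\eqref{decoupledformnew3}, exploit the Fortin property \eqref{eq:MINIcurlcd} to place $I_h\phi-\phi_h$ in the discrete kernel, test with $\psi=I_h\phi-\phi_h$ for the velocity estimate, then invoke the discrete inf-sup \eqref{BE1} for the pressure and close with the interpolation estimates \eqref{BEqs} and \eqref{eq:Ihddelta-estimate}. The only cosmetic differences are that the paper separates the $s$-component of the first error equation as a stand-alone relation $(p-p_h,\delta s)=0$ and works directly with $|\phi-\phi_h|_1$ rather than $\|I_h\phi-\phi_h\|_1$, but these are equivalent after a triangle inequality.
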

\begin{proof}
Subtract \eqref{decoupleMINI2}-\eqref{decoupleMINI3} from \eqref{decoupledformnew2}-\eqref{decoupledformnew3} to get the error equations
\begin{align}
 (\nabla(\phi-\phi_{h}),\nabla\psi)+(\dd \psi, p- p_{h}) &=(\dd(w- w_{h}),\psi)
  &&\forall \,\, \psi\in \Phi_h,\label{erroreqn1} \\
 (\dd(\phi-\phi_{h}), q)=(\dd(I_h\phi-\phi_{h}), q) &=0  &&\forall \,\, q\in V_{k,h}^{\delta,-}\Lambda^{j+2},\label{erroreqn2} \\
( p- p_{h}, \delta s)&=0
  &&\forall \,\, s\in V_{k,h}^{\delta,-}\Lambda^{j+3}.\label{erroreqn3} 
\end{align}
We have used \eqref{eq:MINIcurlcd} in deriving \eqref{erroreqn2}.
Choosing $\psi=I_h\phi-\phi_{h}$ in \eqref{erroreqn1}, it follows from~\eqref{erroreqn2} that
\begin{equation*}
  (\nabla(\phi-\phi_{h}),\nabla(I_h\phi-\phi_{h}))+(\dd(I_h\phi-\phi_{h}), p-I_h^{\delta} p) =(\dd(w- w_{h}),I_h\phi-\phi_{h}),
\end{equation*}
which implies
\begin{equation}\label{eq:phihestimate}
|\phi-\phi_{h}|_1\lesssim |\phi-I_h\phi|_1 +\| p-I_h^{\delta} p\|+ \|\dd(w- w_{h})\|.
\end{equation}

Next estimate $\| p- p_h\|$. For $\psi\in \Phi_h$ and $s\in V_{k,h}^{\delta,-}\Lambda^{j+3}/\delta V_{k,h}^{\delta,-}\Lambda^{j+4}$,
we have from~\eqref{erroreqn1} and~\eqref{erroreqn3} that
\begin{align*}
b(\psi, s; I_h^{\delta} p- p_h)&=b(\psi, s; I_h^{\delta} p- p) -(\nabla(\phi-\phi_{h}),\nabla\psi) + (\dd(w- w_{h}), \psi).
\end{align*}
Then
\begin{equation*}
b(\psi, s; I_h^{\delta} p- p_h)\lesssim (\| p-I_h^{\delta} p\|+|\phi-\phi_{h}|_1+\|\dd(w- w_{h})\|)(\|\psi\|_1+\|\delta s\|).
\end{equation*}
Employ the discrete inf-sup condition \eqref{BE1} to acquire
\begin{align*}
\| p- p_h\| &\leq \| p-I_h^{\delta} p\|+ \|I_h^{\delta} p- p_h\| \lesssim \| p-I_h^{\delta} p\|+|\phi-\phi_{h}|_1+\|\dd(w- w_{h})\|.  
\end{align*}
Together with \eqref{eq:phihestimate}, we arrive at
\begin{equation*}
|\phi-\phi_{h}|_1+\| p- p_h\|\lesssim |\phi-I_h\phi|_1 +\| p-I_h^{\delta} p\|+ \|\dd(w- w_{h})\|.
\end{equation*}
Finally, we conclude \eqref{eq:phihphestimate} from \eqref{BEqs} and the error estimate \eqref{eq:Ihddelta-estimate} of $I_h^{\delta}$.
\end{proof}

\begin{remark}\rm
The error estimate of $\|p-p_h\|$ for the MINI element method of Stokes equation is suboptimal \cite{ArnoldBrezziFortin1984}. While the error estimate \eqref{eq:phihphestimate} for $\|p-p_h\|$ is optimal when $j\leq d-3$.
\end{remark}

We will use the duality argument to estimate $\|\phi-\phi_{h}\|$. Consider the dual problem:
find $\hat{\phi}\in H_{0}^{1}\Lambda^{j+1}$ and
$\hat{p}\in L^{2}\Lambda^{j+2}/\delta H^*\Lambda^{j+3}$ such that
\begin{equation}
\label{stokesdual}
\left\{
\begin{aligned}
-\Delta\hat{\phi}+\delta\hat{p}&=\phi-\phi_{h} \qquad \mathrm{in}\,\,\Omega, \\
\dd\hat{\phi}&=0 \qquad\qquad \,\,\,\mathrm{in}\,\,\Omega.
\end{aligned}
\right.
\end{equation}%
We assume problem \eqref{stokesdual} has the regularity
\begin{equation}\label{dualregularity}
\|\hat{\phi}\|_2 + \|\hat{p}\|_1\lesssim \|\phi-\phi_{h}\|. 
\end{equation}
When $\Omega$ is a convex polytope in two and three dimensions, the regularity \eqref{dualregularity} holds for $j=d-2, d-1$ (cf. \cite{MazyaRossmann2010,Grisvard1992}). 
By $\dd\hat{\phi}=0$, there exists a $\hat{u}\in H_{0}^{1}\Lambda^{j}$ \cite{CostabelMcIntosh2010} such that 
\begin{equation}\label{uhatregular}
\hat{\phi}=\dd \hat{u},\quad\textrm{and}\quad \|\hat{u}\|_1\lesssim \|\hat{\phi}\|.
\end{equation}

\begin{lemma}
Let $(\phi,  p, 0)\in H_{0}^{1}\Lambda^{j+1}\times L^{2}\Lambda^{j+2}\times H^*\Lambda^{j+3}$ be the solution of problem~\eqref{decoupledformnew2}-\eqref{decoupledformnew3}, and $(\phi_h,  p_h, 0)\in \Phi_h\times V_{k,h}^{\delta,-}\Lambda^{j+2}\times V_{k,h}^{\delta,-}\Lambda^{j+3}$ be the solution of the mixed finite element method \eqref{decoupleMINI2}-\eqref{decoupleMINI3} for $0\leq j\leq d-1$. Assume $\phi\in H^{k+1}\Lambda^{j+1}$, $ p\in H^{k}\Lambda^{j+2}$ and regularity \eqref{dualregularity} holds. We have
\begin{equation}\label{eq:phihphestimateL2}
\|\phi-\phi_{h}\|\lesssim h^{k+1}(|\phi|_{k+1}+| p|_{k}) + h\|\dd(w- w_{h})\|.
\end{equation}
\end{lemma}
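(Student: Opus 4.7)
The plan is a standard Aubin–Nitsche duality argument. First I would test the dual problem \eqref{stokesdual} against $\phi-\phi_h$ and integrate by parts (using \eqref{greenidentity} together with $\phi-\phi_h\in H_0^1\Lambda^{j+1}$) to obtain
\[
\|\phi-\phi_h\|^2 = (\nabla(\phi-\phi_h),\nabla\hat\phi) + (\dd(\phi-\phi_h),\hat p).
\]
Next, I would insert the Fortin interpolant $I_h\hat\phi\in\Phi_h$ and the canonical projection $I_h^{\delta}\hat p\in V_{k,h}^{\delta,-}\Lambda^{j+2}$ and invoke the error equations \eqref{erroreqn1}--\eqref{erroreqn3}: the orthogonality \eqref{erroreqn2} kills $(\dd(\phi-\phi_h), I_h^{\delta}\hat p)$, and testing \eqref{erroreqn1} with $\psi=I_h\hat\phi$ rewrites $(\nabla(\phi-\phi_h),\nabla I_h\hat\phi)$ in terms of $p-p_h$ and $w-w_h$. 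This yields the splitting
\[
\|\phi-\phi_h\|^2 = I_1+I_2+I_3+I_4,
\]
with $I_1=(\nabla(\phi-\phi_h),\nabla(\hat\phi-I_h\hat\phi))$, $I_2=(\dd(\phi-\phi_h),\hat p-I_h^{\delta}\hat p)$, $I_3=-(\dd I_h\hat\phi, p-p_h)$, and $I_4=(\dd(w-w_h), I_h\hat\phi)$.

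Terms $I_1$ and $I_2$ would be bounded directly by Cauchy–Schwarz together with \eqref{BEqs} and \eqref{eq:Ihddelta-estimate}, giving $|I_1|+|I_2|\lesssim h|\phi-\phi_h|_1(\|\hat\phi\|_2+\|\hat p\|_1)$. For $I_3$, the trick is that $\dd\hat\phi=0$, so $\dd I_h\hat\phi=\dd(I_h\hat\phi-\hat\phi)$; combined with the Fortin property \eqref{eq:MINIcurlcd} and the fact that $I_h^{\delta}p-p_h\in V_{k,h}^{\delta,-}\Lambda^{j+2}$, I would rewrite $I_3=-(\dd(I_h\hat\phi-\hat\phi), p-I_h^{\delta}p)$ and bound it by $h^{k+1}|\hat\phi|_2|p|_k$ via \eqref{BEqs} and \eqref{eq:Ihddelta-estimate}.

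The main obstacle is the term $I_4$, since $I_h\hat\phi\in\Phi_h$ is not a legitimate test function in the $w$-equation. To circumvent it, I would invoke the regular potential \eqref{uhatregular}, writing $\hat\phi=\dd\hat u$ with $\hat u\in H_0^1\Lambda^j$, and split $I_h\hat\phi=\hat\phi+(I_h\hat\phi-\hat\phi)$. For the first piece, I apply the Galerkin orthogonality \eqref{erroreqnwh1} with $v=I_h^{\dd}\hat u\in\mathring V_{k,h}^{\dd}\Lambda^j$ and the commuting relation \eqref{eq:IhdCDprop} to get $(\dd(w-w_h),\hat\phi)=(\dd(w-w_h),\hat\phi-I_h^{\dd}\hat\phi)$, which is bounded by $h\|\dd(w-w_h)\|\,|\hat\phi|_1$ via \eqref{eq:Ihddeltaestimate}; the second piece is estimated directly by $h^2\|\dd(w-w_h)\|\,|\hat\phi|_2$ from \eqref{BEqs} with $m=0$, $s=2$.

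Finally, collecting the four bounds and applying the regularity \eqref{dualregularity} to replace $\|\hat\phi\|_2+\|\hat p\|_1$ by $\|\phi-\phi_h\|$, I would obtain
\[
\|\phi-\phi_h\|^2 \lesssim \bigl(h|\phi-\phi_h|_1 + h^{k+1}(|\phi|_{k+1}+|p|_k)+h\|\dd(w-w_h)\|\bigr)\|\phi-\phi_h\|.
\]
Dividing by $\|\phi-\phi_h\|$ and eliminating $|\phi-\phi_h|_1$ using \eqref{eq:phihphestimate} then yields the claimed estimate \eqref{eq:phihphestimateL2}.
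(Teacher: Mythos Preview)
Your argument is correct and follows essentially the same duality strategy as the paper: both start from $\|\phi-\phi_h\|^2=(\nabla(\phi-\phi_h),\nabla\hat\phi)+(\dd(\phi-\phi_h),\hat p)$, insert $I_h\hat\phi$ and $I_h^{\delta}\hat p$, invoke the error equations \eqref{erroreqn1}--\eqref{erroreqn2}, and handle the $\dd(w-w_h)$ term via the regular potential \eqref{uhatregular} together with the Galerkin orthogonality \eqref{erroreqnwh1}. The only cosmetic difference is your treatment of $I_3$: you use the Fortin identity \eqref{eq:MINIcurlcd} to replace $p-p_h$ by $p-I_h^{\delta}p$ and bound directly by $h^{k+1}|\hat\phi|_2|p|_k$, whereas the paper leaves it as $-(\dd(I_h\hat\phi-\hat\phi),p-p_h)\lesssim h\|p-p_h\|\,|\hat\phi|_2$ and then absorbs $\|p-p_h\|$ through \eqref{eq:phihphestimate}; both routes yield \eqref{eq:phihphestimateL2}.
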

\begin{proof}
Applying the integration by parts, we get from the error equation \eqref{erroreqn2}, the error estimate \eqref{BEqs} of $I_h$ and the error estimate \eqref{eq:Ihddelta-estimate} of $I_h^{\delta}$ that
\begin{align*}
\|\phi-\phi_h\|^2&=(\phi-\phi_h, -\Delta\hat{\phi}+\delta\hat{p})=(\nabla(\phi-\phi_h), \nabla\hat{\phi})+(\dd(\phi-\phi_h), \hat{p}) \\
&=(\nabla(\phi-\phi_h), \nabla(\hat{\phi}-I_h\hat{\phi})) +(\dd(\phi-\phi_h), \hat{p}-I_h^{\delta}\hat{p}) \\
&\quad + (\nabla(\phi-\phi_h), \nabla(I_h\hat{\phi})) \\
&\lesssim h|\phi-\phi_h|_1(|\hat{\phi}|_2+|\hat{p}|_1) + (\nabla(\phi-\phi_h), \nabla(I_h\hat{\phi})).
\end{align*}
On the other side, by error equation \eqref{erroreqn1}, \eqref{uhatregular}, error equation \eqref{erroreqnwh1}, \eqref{BEqs} and \eqref{eq:Ihddeltaestimate},
\begin{align*}
(\nabla(\phi-\phi_h), \nabla(I_h\hat{\phi}))&=(\dd(w- w_{h}), I_h\hat{\phi}) - (\dd(I_h\hat{\phi}), p- p_{h}) \\
&=(\dd(w- w_{h}), I_h\hat{\phi}-\hat{\phi}) - (\dd(I_h\hat{\phi}-\hat{\phi}), p- p_{h}) \\
&\quad + (\dd(w- w_{h}), \dd(\hat{u}- I_h^{\dd}\hat{u})) \\
&\lesssim h\|\dd(w- w_{h})\||\hat{\phi}|_1 + h\|p- p_{h}\||\hat{\phi}|_2.
\end{align*}
Therefore \eqref{eq:phihphestimateL2} follows from the last two inequalities, regularity \eqref{dualregularity} and \eqref{eq:phihphestimate}.
\end{proof}

\begin{theorem}\label{thm:MINIerrorestima}
Let $0\leq j\leq d-1$.
Let $(w, 0, \phi, p, 0, u,0)$ be the solution of the decoupled formulation \eqref{decoupleformnew}, and $(w_h, 0, \phi_h, p_h, 0, u_h,0)$ the solution of the decoupled finite element method~\eqref{decoupleMINI}.
Assume $u\in H^{k+2}\Lambda^{j}$.
Then
\begin{align}\label{eq:errorestimatauhH1}
\|\dd(u-u_h)\|&\lesssim h^{k+1}|\dd u|_{k+1}+\|\phi-\phi_{h}\|, \\
\label{eq:errorestimatauhH0}
\|u-u_h\|&\lesssim h^{k+1}(|u|_{k+1}+|\dd u|_{k+1})+\|\phi-\phi_{h}\|.
\end{align}
Further, assume $\phi\in H^{k+1}\Lambda^{j+1}$, $ p\in H^{k}\Lambda^{j+2}$, $w\in H^{k+1}\Lambda^{j}$, and regularity~\eqref{dualregularity} holds, then
\begin{align}
\label{DA}
\|\dd(u-u_h)\|&\lesssim h^{k+1}(|\dd u|_{k+1}+|\phi|_{k+1}+| p|_{k}+|\dd w|_{k}),\\
\label{DA0}
\|u-u_h\|&\lesssim h^{k+1}(|u|_{k+1}+|\dd u|_{k+1}+|\phi|_{k+1}+| p|_{k}+|\dd w|_{k}).
\end{align}
\end{theorem}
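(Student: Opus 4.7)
The plan is to mirror the argument used for \eqref{eq:whphestimate}-\eqref{eq:whphestimate0}, since the mixed method \eqref{decoupleMINI4}-\eqref{decoupleMINI40} has exactly the same structure as \eqref{decoupleMINI1}-\eqref{decoupleMINI10}. First, subtracting the discrete system from the continuous one and using $z=z_h=0$ (guaranteed by Theorems~\ref{thm:unisoldecoupledformnew} and \ref{thm:dfemMINIunisol}), I obtain the error equations
\begin{align*}
(\dd(u-u_h),\dd\chi)&=(\phi-\phi_h,\dd\chi)\qquad\forall\,\chi\in\mathring{V}_{k+1,h}^{\dd,-}\Lambda^{j},\\
(u-u_h,\dd\mu)&=0\qquad\qquad\qquad\quad\;\,\forall\,\mu\in\mathring{V}_{k+1,h}^{\dd,-}\Lambda^{j-1}.
\end{align*}
Let $e_h:=I_{k+1,h}^{\dd,-}u-u_h\in\mathring{V}_{k+1,h}^{\dd,-}\Lambda^{j}$.

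For the $H^1$-type bound \eqref{eq:errorestimatauhH1}, I would test the first error equation with $\chi=e_h$ and apply the Cauchy--Schwarz inequality to get
\[
\|\dd e_h\|\le\|\dd(u-I_{k+1,h}^{\dd,-}u)\|+\|\phi-\phi_h\|.
\]
The commutativity \eqref{eq:IhdCDprop} turns $\dd(I_{k+1,h}^{\dd,-}u)$ into an interpolant of $\dd u$, and the estimate \eqref{eq:Ihddeltaestimate} applied to $\dd u\in H^{k+1}\Lambda^{j+1}$ (which is available since $u\in H^{k+2}\Lambda^{j}$) gives $\|\dd(u-I_{k+1,h}^{\dd,-}u)\|\lesssim h^{k+1}|\dd u|_{k+1}$. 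A triangle inequality then yields \eqref{eq:errorestimatauhH1}.

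For the $L^2$ bound \eqref{eq:errorestimatauhH0}, I would use the discrete Hodge decomposition associated with the exact complex \eqref{femderhamcomplex1form2}: write $e_h=\chi_h+\dd\eta_h$ with $\eta_h\in\mathring{V}_{k+1,h}^{\dd,-}\Lambda^{j-1}$ and $\chi_h$ $L^2$-orthogonal to $\dd\mathring{V}_{k+1,h}^{\dd,-}\Lambda^{j-1}$. The discrete Poincar\'e inequality on the orthogonal complement gives $\|\chi_h\|\lesssim\|\dd\chi_h\|=\|\dd e_h\|$. Combining the second error equation with the orthogonality of $\chi_h$ gives $\|\dd\eta_h\|^{2}=(I_{k+1,h}^{\dd,-}u-u,\dd\eta_h)$, so $\|\dd\eta_h\|\le\|u-I_{k+1,h}^{\dd,-}u\|\lesssim h^{k+1}|u|_{k+1}$ by \eqref{eq:Ihddelta-estimate}. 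Summing the two components and invoking the triangle inequality to add back $\|u-I_{k+1,h}^{\dd,-}u\|$ produces \eqref{eq:errorestimatauhH0}. Finally, \eqref{DA} and \eqref{DA0} follow immediately by inserting the $L^2$-estimate \eqref{eq:phihphestimateL2} for $\|\phi-\phi_h\|$ together with $h\|\dd(w-w_h)\|\lesssim h^{k+1}|\dd w|_k$ supplied by \eqref{eq:whphestimate}.

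The main obstacle I anticipate is the $L^2$-estimate \eqref{eq:errorestimatauhH0}: the second error equation only controls the pairing $(u-u_h,\dd\mu)$, so one must exploit the Hodge splitting of $e_h$ to separate the divergence-free-like part (bounded via the discrete Poincar\'e inequality by $\|\dd e_h\|$) from the potential part $\dd\eta_h$, and then show that this potential part is of the \emph{same} order $h^{k+1}$ as the primary interpolation error rather than losing a power of $h$.
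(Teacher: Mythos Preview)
Your proof is correct and follows essentially the same approach as the paper: derive the error equations from $z=z_h=0$, test with the interpolation error to obtain \eqref{eq:errorestimatauhH1}, and then combine the triangle inequality with the discrete Poincar\'e inequality and the interpolation estimates to get the $L^2$ bound \eqref{eq:errorestimatauhH0}. Your explicit discrete Hodge decomposition $e_h=\chi_h+\dd\eta_h$ is simply a spelled-out version of what the paper compresses into one sentence (``triangle inequality, the discrete Poincar\'e inequalities, \eqref{eq:IhdCDprop} and interpolation estimate \eqref{eq:Ihddelta-estimate}''); the two arguments are the same in substance. One cosmetic point: when you bound $\|\dd(u-I_{k+1,h}^{\dd,-}u)\|$ you should invoke \eqref{eq:Ihddelta-estimate} (with $k$ replaced by $k+1$) rather than \eqref{eq:Ihddeltaestimate}, since the latter is stated for the full-polynomial projection $I_{k,h}^{\dd}$.
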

\begin{proof}
To estimate $\|\dd(u-u_{h})\|$, subtract \eqref{decoupledformnew4} from \eqref{decoupleMINI4} to get the error equation
\begin{equation*}
  (\dd(u-u_h), \dd\chi)=(\phi-\phi_{h}, \dd\chi) \quad\forall \,\, \chi\in \mathring{V}_{k+1,h}^{\dd,-}\Lambda^{j}.
\end{equation*}
Taking $\chi=I_h^{\dd}u-u_h\in \mathring{V}_{k+1,h}^{\dd,-}\Lambda^{j}$, we get
\begin{align*}
\|\dd(u-u_h)\|^2=(\dd(u-u_h), \dd(u-I_h^{\dd}u))+(\phi-\phi_{h}, \dd(I_h^{\dd}u-u_h)).
\end{align*}
Then
\begin{equation*}
\|\dd(u-u_h)\|\lesssim \|\dd u-I_h^{\dd}(\dd u)\|+\|\phi-\phi_{h}\|,
\end{equation*}
which combined with \eqref{eq:Ihddeltaestimate} gives \eqref{eq:errorestimatauhH1}.
The estimate \eqref{DA} holds from \eqref{eq:phihphestimateL2}-\eqref{eq:errorestimatauhH1} and~\eqref{eq:whphestimate}.

Finally, by employing the triangle inequality, the discrete Poincar\'e inequalities,~\eqref{eq:IhdCDprop} and interpolation estimate \eqref{eq:Ihddelta-estimate}, we conclude estimates \eqref{eq:errorestimatauhH0} and~\eqref{DA0} from estimates \eqref{eq:errorestimatauhH1} and \eqref{DA}, respectively.
\end{proof}

\begin{remark}\rm
Conforming finite element methods of the decoupled formulation~\eqref{decoupleformnew} with $j=0$ and $d=2$, i.e. the biharmonic equation in two dimensions, are analyzed in \cite[Section 4.2]{ChenHuang2018}.
\end{remark}


\section{Numerical results}\label{sec:numeresult}

In this section, we will numerically test the decoupled finite element method~\eqref{decoupleMINIdiag} for $j=0$ in three dimensions, i.e. problem \eqref{4thorderpde} is the biharmonic equation.
Let $\Omega = (0,1)^{3} $ be the unit cube, and the exact solution of biharmonic equation be 
\begin{align*}
u(\boldsymbol{x})=\sin^3(\pi x_1)\sin^3(\pi x_2)\sin^3(\pi x_3).
\end{align*}
The load function $f$ is analytically computed from problem \eqref{4thorderpde}. 
We utilize uniform tetrahedral meshes on $\Omega$.

Numerical errors of the decoupled finite element method \eqref{decoupleMINIdiag} with $k=1$ are shown in Table~\ref{eer1} and Table~\ref{eer2}. From these two tables we can see that $\|u-u_{h}\|=O(h^2)$, $|u-u_{h}|_1=O(h^2)$, $\|\phi-\phi_{h}\|=O(h^2)$ and $|\phi-\phi_{h}|_{1}=O(h)$, which agree with the theoretical estimates \eqref{eq:phihphestimate}, \eqref{eq:phihphestimateL2} and \eqref{DA}-\eqref{DA0}.
\begin{table}[htbp]
\setlength{\abovecaptionskip}{0pt}
\setlength{\belowcaptionskip}{0pt}
\caption{ Errors $\|u-u_{h}\|$ and $|u-u_{h}|_1$ of the decoupled conforming method \eqref{decoupleMINIdiag} with $k=1$.}
\label{eer1}
\begin{center}
\begin{tabular}{ccccccc}
\toprule
$h$   \,  &\,   $\|u-u_{h}\|$ \, & \,   rate    \,  & \,  $|u-u_{h}|_1$ \,  &\,  rate   \\
\midrule
$2^{-2}$  \,  &\,   1.30759E$-$01    \, &  \, $-$     \, &   \, 9.92045E$-$01   \,  &\, $-$ \\
$2^{-3}$ \,  &\,     5.04489E$-$02    \, &  \,  1.3740  \,  &  \, 4.34958E$-$01   \,  &\,  1.1895   \\
$2^{-4}$  \,  &\,     1.42827E$-$02    \, &   \, 1.8206   \, &  \, 1.33687E$-$01   \,  &\,  1.7020  \\
$2^{-5}$   \,  &\,    3.67529E$-$03    \, &   \, 1.9583   \, &  \, 3.52141E$-$02   \,  &\, 1.9246   \\
\bottomrule
\end{tabular}
\end{center}
\end{table}
\begin{table}[htbp]
\setlength{\abovecaptionskip}{0pt}
\setlength{\belowcaptionskip}{0pt}
\caption{ Errors $\|\phi-\phi_{h}\|$ and $|\phi-\phi_{h}|_{1}$ of the decoupled conforming method \eqref{decoupleMINIdiag} with $k=1$.}
\label{eer2}
\begin{center}
\begin{tabular}{ccccc}
\toprule
$h$ \,  &\, $\|\phi-\phi_{h}\|$ \,  &\,   rate  \,  &\,     $|\phi-\phi_{h}|_{1}$ \,  &\,   rate  \\
\midrule
$2^{-2}$  \,  &\,    1.69698E+00    \,  &\,    $-$    \,  &\,    1.10196E+01      \,  &\,  $-$ \\
$2^{-3}$ \,  &\,   7.45455E$-$01   \,  &\,    1.1868    \,  &\,     6.38092E+00    \,  &\,  0.7882 \\
$2^{-4}$  \,  &\,   2.29390E$-$01    \,  &\,   1.7003    \,  &\,    2.83386E+00      \,  &\,  1.1710 \\
$2^{-5}$  \,  &\,    6.04572E$-$02     \,  &\,   1.9238    \,  &\,    1.37843E+00      \,  &\,  1.0397 \\
\bottomrule
\end{tabular}
\end{center}
\end{table}

\bibliographystyle{abbrv}
\bibliography{./refs}
\end{document}